\newtheorem {lemma}{Lemma}[section]
\newtheorem {theorem} {Theorem}[section]
\newtheorem {corollary}{Corollary}[section]
\newtheorem {claim}{Claim}[section]
\newtheorem {question}{Question}[section]
\numberwithin{equation}{section}
\renewcommand{\le}{\leqslant}
\renewcommand{\ge}{\geqslant}
\begin{document}

\title{Spectral conditions implying the existence of doubly chorded cycles without or with constraints }

\author{Leyou Xu\footnote{Email: leyouxu@m.scnu.edu.cn}, Bo Zhou\footnote{Email: zhoubo@m.scnu.edu.cn} \\
School of Mathematical Sciences, South China Normal University\\
Guangzhou 510631, P.R. China}

\date{}
\maketitle

\begin{abstract}
What spectral conditions imply a graph contains a chorded cycle? This question was asked by R.J. Gould   in 2022. We answer two modified versions of Gould's question by
giving tight spectral conditions that imply the existence of a doubly chorded cycle and  a doubly chorded cycle with chords incident to one vertex, respectively. \\ \\
{\it Keywords:} doubly chorded cycle, spectral condition, spectral radius\\ \\
{\it Mathematics Subject Classifications:} 05C50, 05C38
\end{abstract}

\section{Introduction}

One of the classical questions in combinatorics is as follows: What bounds for the size imply an $n$-vertex graph contains a subgraph with a certain prescribed structure?
%In recent years,
Its spectral version is: What spectral conditions imply  an $n$-vertex  graph contains a subgraph with a certain prescribed structure?

We consider only simple graphs. Let $G$ be a graph with vertex set $V(G)$ and edge set $E(G)$. Let $\delta(G)$ denote the minimum degree of $G$.
The adjacency matrix $A(G)$ of $G$ is the matrix $(a_{uv})_{u,v\in V(G)}$, where
$a_{uv}=1$ if $u$ and $v$ are adjacent and $0$ otherwise. The largest eigenvalue of $A(G)$ is called the spectral radius of $G$, denoted by $\rho(G)$. The above question may be rephrased via the spectral radius as: Among $n$-vertex graphs that do not contain a subgraph with a certain prescribed structure, what is the maximum spectral radius and which graphs achieve this value? Such questions, originated from \cite{BE2} and called Brualdi--Solheid--Tur\'{a}n type problems in \cite{Ni}, received much attention in recent years, see, e.g. \cite{CDT,LP,Ni,WHM,ZLX,ZHL}.

Let $K_s$, $P_s$ and $C_s$ denote a complete graph on $s$ vertices, a path on $s$ vertices and a cycle on $s$ vertices. For positive integers $n_1,\dots, n_k$ with $k\ge 2$, let $K_{n_1,\dots, n_k}$ denote the complete $k$-partite graph with part sizes $n_1,\dots, n_k$.

Let $C$ be a cycle of a graph. We say an edge that joins two vertices of a cycle $C$ is a chord
of $C$ if the edge is not itself an edge of $C$.
 If $C$ has at least one chord, then it is called a chorded cycle,
and if $C$ has at least two chords, then it is called a doubly chorded cycle (DCC for short).
The study of cycles in graphs is well established. The existence of cycles with additional properties such as containing at least $s$ chords for some number $s\ge 1$ received much attention, see \cite{CFGH,CY,DMMS,Gou,QZ} and references therein. For $s\ge 1,2$, the cycles are just chorded cycles and doubly chorded cycles, respectively.

Given a graph $G$, P\'{o}sa \cite{Pos}  suggested the problem of finding degree conditions that imply the existence of a chorded cycle in a graph $G$. Czipser proved (see problem 10.2, p.~65 in \cite{Lov}) that $\delta(G)\ge 3$ suffices.
Gould   asked in \cite{Gou} the following question: What spectral conditions imply a graph contains a chorded cycle?
Zheng et al. \cite{ZHW} answered Gould's question by showing that any graph $G$ of order $n\ge 6$ with $\rho(G)\ge \rho(K_{2, n-2})$ contains a chorded cycle unless $G\cong K_{2,n-2}$. Different answers are provided via the signless Laplacian spectral radius \cite{XZ}.
Gould et al. \cite{GHH} observed  that it is only slightly more difficult to show that $\delta(G)\ge 3$ implies $G$ contains a DCC. So it is more natural to consider conditions implying the existence of DCCs \cite{GHH,Sv}. We try to answer the following two questions, which may be viewed as modified versions of
Gould's original question.

\begin{question} \label{LY}
What tight spectral conditions imply a graph on $n$ vertices contains a DCC?
\end{question}

The problem to determine the maximum size of an $n$-vertex graph
 that does not contain a cycle with many chords incident to a vertex received much attention. Through the work of Erd\H{o}s, Lewin (see \cite[p.~398, no.~12]{Bol}) and
 Bollob\'{a}s \cite[p.~398, no.~12]{Bol},  Jiang \cite{Ji} showed that for any $n$-vertex graph with $n\ge 9$  that does not contain a DCC with two chords incident to a vertex,  $|E(G)|\le 3n-9$, and it is attained if $G\cong K_{3, n-3}$.
 In \cite{XZ}, it was conjectured that there is an $n_0$ such that for any $n$-vertex graph $G$ that does not contain a DCC  with two chords incident to a vertex, $\rho(G)\le \rho(K_{3, n-3})$ with equality if and only if $G\cong K_{3,n-3}$ for $n\ge n_0$. This is part of the following question.

 We call a DCC with two chords incident to a vertex a DCC$_1$.

\begin{question} \label{LY2}
What tight spectral conditions imply a graph on $n$ vertices contains a DCC$_1$?
\end{question}

Let $G\cup H$ be the disjoint union of graphs $G$ and $H$. The disjoint union of $k$ copies of a graph $G$ is denoted by $kG$.
The join of disjoint graphs $G$ and $H$, denoted by $G\vee H$, is the graph obtained from $G\cup H$ by adding all possible edges between vertices in $G$ and vertices in $H$. %Let $K_1\vee P_4=K_1\vee P_4$.
Note that $K_1\vee P_4$ is the unique $5$-vertex DCC$_1$.
Zhao et al. \cite{ZHL} determined the $n$-vertex graphs that maximize the spectral radius among graphs
that do not contain any wheels $W_r:=P_1\vee C_r$ for $r\ge 3$.  A graph that does not contain copies of $K_1\vee P_4$ does not contain any wheels $W_r$ for $r\ge 4$ but it may contain $W_3$ (i.e., $K_4$).
A graph that does not contains any wheels may contain $K_1\vee P_4$.
There are spectral conditions for a graph with fixed order (size, respectively) which imply the existence of a copy of $K_1\vee P_4$ \cite{ZP} (\cite{YLP}, respectively). There is also a paper considering the spectral extremal graphs of graphs containing no copies of $K_1\vee P_{2k}$ with $k\ge 3$ for sufficiently large order \cite{YLY}.

%\begin{question} \label{LY3}
%What spectral conditions imply a graph on $n$ vertices contains a copy of $K_1\vee P_4$?
%\end{question}

Questions \ref{LY} and \ref{LY2} %and \ref{LY3}
are answered by the following Theorems \ref{double3},
and \ref{double2}, % and \ref{double1},
respectively.

\begin{theorem}\label{double3}
Suppose that $G$ is an $n$-vertex graph containing no DCCs, where $n\ge 3$. Then $\rho(G)\le \tfrac{1}{2}+\sqrt{2n-\tfrac{15}{4}}$ with equality if and only if $G\cong K_{1,1,n-2}$.
\end{theorem}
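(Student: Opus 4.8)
The plan is to pair a general eigenvector inequality with a combinatorial edge bound that isolates the DCC-free hypothesis, and then to recover $K_{1,1,n-2}$ from the joint equality case. First I would reduce to $G$ connected: a disconnected $G$ has $\rho(G)$ equal to the spectral radius of a DCC-free component on fewer than $n$ vertices, to which the (inductively available) bound applies, giving a strict inequality. Assuming $G$ connected, let $\mathbf{x}$ be its Perron eigenvector normalised so that $\max_v x_v = x_u = 1$, put $N_1 = N(u)$, let $N_2$ be the set of vertices at distance exactly $2$ from $u$, set $d = |N_1|$, and write $m_{uw} = |N(u)\cap N(w)|$. Reading off the $u$-coordinate of $A^2\mathbf{x} = \rho^2\mathbf{x}$ and of $A\mathbf{x} = \rho\mathbf{x}$ and subtracting gives
\[
\rho^2 - \rho = d + \sum_{w\in N_1}(m_{uw}-1)x_w + \sum_{w\in N_2} m_{uw}x_w .
\]
Since $0\le x_w\le 1$, since $m_{uw}=0$ beyond distance $2$, and since the terms with $m_{uw}=0$ are nonpositive and may be dropped, this yields, for \emph{every} graph,
\[
\rho^2 - \rho \le \ell + 2e_1 + P ,
\]
where $\ell$ is the number of isolated vertices of $G[N_1]$, $e_1 = e(G[N_1])$, and $P = e(N_1,N_2)$; here I used $\sum_{w\in N_1}m_{uw}=2e_1$ and that $d$ minus the number of non-isolated vertices of $G[N_1]$ equals $\ell$.

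The DCC-free hypothesis now enters through a single principle. Because $u$ is adjacent to all of $N_1$, on any cycle $C$ through $u$ every vertex of $N_1\cap C$ other than the two cycle-neighbours of $u$ is joined to $u$ by a chord; as $C$ carries at most one chord, $C$ meets $N_1$ in at most three vertices. Applied to cycles inside $\{u\}\cup N_1$ this forbids a path on four vertices in $G[N_1]$, so (using also that $K_4$ is a DCC, whence $G[N_1]$ is triangle-free) $G[N_1]$ is a disjoint union of stars; writing $k$ for the number of nontrivial stars gives $d = \ell + e_1 + k$. Applied to cycles that leave into $N_2$, the same principle forces the vertices of $N_2$ to attach to $N_1$ very sparsely.

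Converting these restrictions into the combinatorial bound $\ell + 2e_1 + P \le 2n-4$ is the step I expect to be the principal obstacle; by the vertex count $1+d+|N_2|\le n$ together with $d=\ell+e_1+k$, it suffices to prove
\[
P = e(N_1,N_2) \le \ell + 2k + 2|N_2| - 2 .
\]
The difficulty is that a single vertex of $N_2$ may have arbitrarily many neighbours in $N_1$ (all of an independent set of former leaves, as in $K_{2,\ell}$), so there is no uniform per-vertex degree cap; instead one must bound the overlaps between the $N_1$-neighbourhoods of distinct vertices of $\{u\}\cup N_2$. Any three of them with three common neighbours would span a $K_{3,3}$, which is a DCC, and finer chorded-cycle arguments (of the kind above, now with one leg of the cycle running through $N_2$) further restrict these overlaps. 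Assembling these local constraints into the exact global count is the technical heart of the argument.

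Granting this, $\rho^2-\rho\le 2n-4$, which rearranges to $\rho\le\tfrac12+\sqrt{2n-\tfrac{15}{4}}$. For the equality discussion I would trace both inequalities. Equality in the eigenvector step forces $x_w=1$ at every vertex with a strictly positive coefficient and, crucially, forces $\ell=0$, since an isolated vertex of $G[N_1]$ contributes a strictly negative term that was discarded; this already rules out the competitor $K_{2,n-2}$. Feeding $\ell=0$ and the tightness of the combinatorial bound back through the estimates forces no vertices at distance $\ge 3$, then $N_2=\varnothing$ (two nonadjacent vertices of value $1$ together with the star structure would, for $n\ge 5$, produce a $K_{3,3}$ or a chorded $5$-cycle), and finally that $G[N_1]$ is a single star on all $n-1$ neighbours of $u$; that is exactly $K_{1,1,n-2}$. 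Conversely $\rho(K_{1,1,n-2})=\tfrac12+\sqrt{2n-\tfrac{15}{4}}$ is read from the equitable partition into $\{u_1,u_2\}$ and the independent part, with quotient matrix $\left(\begin{smallmatrix}1&n-2\\2&0\end{smallmatrix}\right)$.
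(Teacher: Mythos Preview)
Your setup is exactly the paper's: the identity $(\rho^2-\rho)x_u=d_u x_u+\sum_{w\in N_1}(m_{uw}-1)x_w+\sum_{w\in N_2}m_{uw}x_w$, the reduction of $G[N_1]$ to a disjoint union of stars, and the resulting bound $\rho^2-\rho\le \ell+2e_1+P$ are identical (in the paper's notation this is $\rho^2-\rho\le |X|+|X_2|-2|X_2'|+e(X,Y)-\sum_{v\in X_0}x_v/x_u$, which is the same quantity). The gap you flag---proving $P\le \ell+2k+2|N_2|-2$, equivalently $\ell+2e_1+P\le 2n-4$---is precisely where the paper does its work, and you have not supplied it. This is not a minor technicality: it occupies Claims~3.1--3.4 and the two pages of counting that follow, and it also draws on the external edge bound $e(G)\le 2n-3$ (Lemma~3.1 in the paper) applied to a carefully chosen induced subgraph.

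There is also a strategic divergence worth noting. You aim to prove $\ell+2e_1+P\le 2n-4$ for \emph{every} connected DCC-free $G$ and then read off the equality case. The paper does not do this. It assumes from the outset that $G$ is extremal, so that $\rho\ge\rho(K_{1,1,n-2})$ gives $\rho^2-\rho\ge 2n-4$; the inequality $2n-4\le \ell+2e_1+P$ is then used as a \emph{constraint} from which structure is squeezed. Extremality buys two tools you do not have: first, the ``no cut vertex off $u$'' reduction (Claim~3.1), obtained by a Kelmans-type edge switch that preserves DCC-freeness and increases $\rho$; second, the freedom to reach contradictions by exhibiting any DCC-free graph with larger $\rho$. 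Both are used in the structural claims (e.g., Claim~3.2 on how $X_2\setminus X_2'$ attaches to $Y$), so your direct route would need genuinely different arguments at those points. Your equality analysis is also underspecified: the step ``$\ell=0$ and tightness force $N_2=\varnothing$'' is where the paper splits into cases $(|X_1|,|X_2'|,|Y_0|,|Y_0'|)\in\{(2,0,0,0),(0,1,0,0)\}$ and finishes each separately; the one-line justification you give does not cover this.
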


Let $F_1$ be the graph obtained from $K_2$ and $K_4$ by identifying a vertex.

\begin{theorem}\label{double2}
Suppose that $G$ is an $n$-vertex graph containing no DCC$_1$s. Then
\[
\rho(G)\le \begin{cases}
n-1&\mbox{ if }n\le 4\\
3.0861&\mbox{ if }n=5\\
\frac{1}{2}+\sqrt{2n-\frac{15}{4}}&\mbox{ if }6\le n\le 9\\
\sqrt{3(n-3)}&\mbox{ if }n\ge 10
\end{cases}
\]
with equality if and only if
\begin{equation}\label{FFF}
G\cong
\begin{cases}
K_n&\mbox{ if }n\le 4,\\
F_1&\mbox{ if }n=5,\\
K_{1,1,n-2}&\mbox{ if }6\le n\le 9,\\
K_{3,n-3}&\mbox{ if }n\ge 10.
\end{cases}
\end{equation}
\end{theorem}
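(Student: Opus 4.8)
The plan is to identify the graph $G$ that maximizes $\rho$ among all $n$-vertex DCC$_1$-free graphs and match it to the list in \eqref{FFF}. First I would note that an extremal $G$ may be taken connected: if $G$ were disconnected, joining the component that carries the Perron vector to another component by a bridge creates no new cycle, hence no DCC$_1$, while strictly increasing $\rho$. I then fix a positive Perron eigenvector $\mathbf{x}$ normalized so that $x_u=\max_v x_v=1$, and record the two lower bounds $\rho(G)\ge \rho(K_{3,n-3})=\sqrt{3(n-3)}$ and $\rho(G)\ge \rho(K_{1,1,n-2})=\tfrac12+\sqrt{2n-\tfrac{15}{4}}$ coming from the competitors, each of which I would first check to be DCC$_1$-free. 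The combinatorial engine is a local reformulation: $G$ has a DCC$_1$ with apex $v$ exactly when $G-v$ contains a path meeting $N(v)$ in at least four vertices. Thus DCC$_1$-freeness forces, for every $v$, that no path of $G-v$ hits $N(v)$ four times; in particular $G[N(v)]$ contains no $P_4$, so each $G[N(v)]$ is a disjoint union of stars and triangles.

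The proof then splits on whether $G$ contains a DCC. If $G$ is DCC-free, Theorem~\ref{double3} applies directly and gives $\rho(G)\le \tfrac12+\sqrt{2n-\tfrac{15}{4}}$ with equality only for $K_{1,1,n-2}$. The substance is the complementary case, where $G$ is DCC$_1$-free but contains a DCC. Here the driver is the two-step identity
\[
\rho^2=d(u)+\sum_{w\ne u}|N(u)\cap N(w)|\,x_w,
\]
obtained from $\rho^2x_u=\sum_{v\sim u}\sum_{w\sim v}x_w$ by collecting the coefficient $|N(u)\cap N(w)|$ of each $x_w$ and using $x_u=1$. Evaluated on $K_{3,n-3}$ with $u$ in the part of size $3$, only the two other vertices of that part contribute, each with weight $n-3$, reproducing $\rho^2=3(n-3)$; the goal is to show this is the worst case once $n$ is large.

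To bound the right-hand side I would use the local structure to control the common-neighbour sum. The crude estimate $x_w\le 1$ gives only $\rho^2\le\sum_{v\in N(u)}d(v)$, which is too weak (it is already violated by $K_{1,1,n-2}$), so the key is to separate the few vertices $w$ carrying large eigenvector weight from the many carrying small weight, and to use the star/triangle description of $G[N(u)]$ together with the no-four-hits condition (and, as a backstop, Jiang's global bound $|E(G)|\le 3n-9$) to cap both the number of vertices sharing several common neighbours with $u$ and the relevant degree sum. This should yield $\rho\le\sqrt{3(n-3)}$ for large $n$, while for $n\le 5$ the few-vertex analysis instead produces the dense optima $K_n$ (for $n\le 4$) and $F_1$ (for $n=5$), and for $6\le n\le 9$ it shows any DCC-containing DCC$_1$-free graph stays strictly below the DCC-free bound $\tfrac12+\sqrt{2n-\tfrac{15}{4}}$. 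Comparing the two regimes value by value then produces exactly the four-way case split in the statement, and a final equality analysis forces $G$ to be the claimed graph in each range.

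I expect the main obstacle to be precisely this quantitative control of $\sum_{w\ne u}|N(u)\cap N(w)|\,x_w$ in the DCC-containing case: the star-and-triangle description of neighbourhoods is only a local constraint, whereas a DCC$_1$ can also be routed through vertices outside $N(u)$, so ruling out large weighted common-neighbour contributions requires tracking second-neighbourhood edges carefully. The analysis is delicate near the crossover $n=9,10$, where $\rho(K_{1,1,n-2})$ and $\rho(K_{3,n-3})$ are nearly equal, and at the isolated value $n=5$, where the optimiser $F_1$ belongs to neither infinite family and must be singled out by a direct finite check.
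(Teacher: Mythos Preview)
Your setup matches the paper's: take $G$ extremal hence connected, fix the Perron maximizer $u$, set $X=N(u)$ and $Y=V\setminus N[u]$, and note that $G[X]$ has no $P_4$ subgraph, so its components are isolated vertices, isolated edges, stars $K_{1,r}$, or triangles. Your path-hitting characterization of a DCC$_1$ is correct and is implicitly what drives the paper's structural claims.

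The gap is in the main step, and it stems from a misreading of how the crude bound is used. You observe that $x_w\le 1$ gives $\rho^2\le |X|+2e(X)+e(X,Y)$ and dismiss this as too weak because the right-hand side can exceed $3(n-3)$ for DCC$_1$-free graphs, proposing instead to refine by eigenvector weight. But the paper uses exactly this crude bound, in the opposite direction: since $G$ is extremal, $\rho\ge\sqrt{3(n-3)}$, so one obtains the purely combinatorial inequality $3(n-3)\le |X|+2e(X)+e(X,Y)$, and the entire proof consists of showing that this inequality together with DCC$_1$-freeness forces $G\in\{K_{3,n-3},K_{1,1,n-2},F_1\}$. This requires partitioning $X=X_0\cup X_1\cup X_2\cup X_3$ by component type and $Y$ correspondingly, a long chain of claims bounding each cross-count $e(X_i,Y_j)$ by exhibiting an explicit DCC$_1$ whenever the bound fails, reducing to a residual bipartite piece $G[\{u\}\cup X_0^*\cup Y_0]$ to which the edge bound of Lemma~\ref{cdouble2} applies, and finally several auxiliary spectral comparisons (Lemmas~\ref{comp2}--\ref{cal2}, and ad hoc coalescence arguments via Lemmas~\ref{comm} and~\ref{comm2}) to eliminate sporadic near-extremal candidates such as $K_1\vee rK_3$ glued to a $K_{3,m}$. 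None of this appears in your plan, and your proposed eigenvector-weight separation is neither executed nor an evident substitute for it. Your DCC/no-DCC split is harmless but buys nothing: Theorem~\ref{double3} disposes of the DCC-free case, but the DCC-containing case is precisely where all the work lies, and the existence of a DCC gives no leverage in the structural analysis above.
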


\section{Preliminaries}

Let $G$ be a graph.
For $v\in V(G)$, we denote by $N_G(v)$ the neighborhood of $v$ in $G$, and $d_G(v)$ the degree of $v$ in $G$. Let $N_G[v]:=\{v\}\cup N_G(v)$ be the closed neighborhood of $v$. For simplicity, we use $d_u$ for $d_G(u)$ if there is no ambiguity. For $S\subseteq V(G)$, let $N_S(v)=N_G(v)\cap S$ and  $d_S(v)=|N_S(v)|$ whether $v\in S$ or not.
For $S,T\subset V(G)$, let $e(S,T)$ be the number of edges between $S$ and $T$. Particularly, if $S=T$, then $e(S)$ denotes the number of edges in $G[S]$ (the subgraph of $G$ induced by $S$). The set $S$ is independent if $G[S]$ has no edges.

For a nontrivial graph $G$ with $v\in V(G)$, $G-v$ denotes the graph obtained from $G$ by removing $v$ and its incident edges.

A vertex $v$ in a connected graph $G$ is a cut vertex if $G-v$ is disconnected, equivalently,
there exist vertices $u$ and $w$ distinct from $v$ such that there is at least one
path from $u$ to $w$ in $G$ and $v$ lies on every such path in $G$.

For a graph $G$ with two nonadjacent vertices $u$ and $v$, denote by $G+uv$ the graph obtained from $G$ by adding the edge $uv$. If $wz$ is an edge of $G$, then $G-wz$ is the graph obtained from $G$ be removing the edge $wz$.

The following lemma is an immediate consequence of the Perron-Frobenius theorem.

\begin{lemma}\label{addedges}
Let $G$ be a  graph and $u$ and $v$  two nonadjacent vertices of $G$. If $G+uv$ is connected,  then $\rho(G+uv)> \rho(G)$.
\end{lemma}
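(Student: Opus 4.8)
The plan is to compare the two adjacency matrices directly and to extract the strict inequality from the irreducibility that connectivity of $G+uv$ guarantees. Write $A=A(G)$ and $A'=A(G+uv)$, and observe that $A'=A+E$, where $E$ is the symmetric $0/1$ matrix whose only nonzero entries are a $1$ in positions $(u,v)$ and $(v,u)$; in particular $A'\ge A$ entrywise. Both matrices are nonnegative, so the Perron--Frobenius theorem supplies a nonzero eigenvector $x\ge 0$ of $A$ for the eigenvalue $\rho(G)$, which I normalise so that $x^\top x=1$.

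Using $x$ as a test vector in the Rayleigh quotient for the symmetric matrix $A'$ gives $\rho(G+uv)\ge x^\top A' x = x^\top A x + x^\top E x = \rho(G)+2x_ux_v\ge \rho(G)$, since $x^\top A x=\rho(G)$ and $x^\top E x=2x_ux_v\ge 0$. This already yields $\rho(G+uv)\ge\rho(G)$, and if in addition $x_u>0$ and $x_v>0$ the inequality is strict and we are finished.

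The only obstacle is the possibility $x_ux_v=0$, which can arise when $G$ is disconnected and $x$ is supported off one of $u,v$ (adding $uv$ may join two components). To rule out equality in general I argue by contradiction: suppose $\rho(G+uv)=\rho(G)=:\rho$. Then the displayed chain forces both $x^\top A'x=\rho=\rho(A')$ and $x_ux_v=0$. Since $\|x\|=1$ and $x$ attains the maximum of the Rayleigh quotient of $A'$, it is an eigenvector of $A'$ for its largest eigenvalue $\rho(A')$. But $G+uv$ is connected, so $A'$ is irreducible; by the Perron--Frobenius theorem its leading eigenvalue is simple with a strictly positive eigenvector, whence every nonzero nonnegative eigenvector for $\rho(A')$ is a positive multiple of it and is therefore strictly positive. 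Thus $x>0$, contradicting $x_ux_v=0$. Consequently $\rho(G+uv)>\rho(G)$, as claimed. The single delicate point is exactly this strictness step, where the hypothesis that $G+uv$ is connected (equivalently, that $A'$ is irreducible) is indispensable; the remainder is a one-line Rayleigh-quotient comparison.
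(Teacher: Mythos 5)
Your proof is correct: the paper states this lemma without proof, calling it ``an immediate consequence of the Perron--Frobenius theorem,'' and your Rayleigh-quotient argument is exactly the standard way of making that consequence precise. In particular, you correctly handle the one genuinely delicate point --- that $G$ may be disconnected, so the nonnegative eigenvector $x$ of $A(G)$ may vanish at $u$ or $v$ --- by using irreducibility of $A(G+uv)$ to force any Rayleigh maximizer to be strictly positive, which is precisely where the connectivity hypothesis enters.
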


If $G$ is a connected graph, then $A(G)$ is irreducible, so by Perron-Frobenius theorem, there exists a unique unit positive eigenvector of $A(G)$ corresponding to $\rho(G)$, which we call the Perron vector of $G$.

\begin{lemma}\label{perron} \cite{BS}
Let $G$ be a connected graph with $\{u,v,w\}\subseteq V(G)$ such that $uw\notin E(G)$ and $vw\in E(G)$. If $\mathbf{x}$ is the Perron vector of $G$ with $x_u\ge x_v$, then $\rho(G-vw+uw)>\rho(G)$.
\end{lemma}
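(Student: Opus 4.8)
The plan is to compare $G$ with $G' := G - vw + uw$ by using the Perron vector $\mathbf{x}$ of $G$ as a test vector in the Rayleigh quotient for $G'$. Write $A = A(G)$ and $A' = A(G')$, set $\rho := \rho(G)$, and normalize $\mathbf{x}$ so that it is the unit positive Perron vector with $A\mathbf{x} = \rho\mathbf{x}$; positivity of every coordinate is guaranteed by connectedness of $G$ via the Perron--Frobenius theorem. Since $A'$ arises from $A$ by deleting the edge $vw$ and inserting the edge $uw$, the two quadratic forms differ only in the corresponding four symmetric entries, and a direct computation gives
\[
\mathbf{x}^\top A' \mathbf{x} - \mathbf{x}^\top A \mathbf{x} = 2x_w x_u - 2 x_w x_v = 2 x_w (x_u - x_v) \ge 0,
\]
where the inequality uses $x_w > 0$ together with the hypothesis $x_u \ge x_v$.

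Because $\mathbf{x}$ is a unit vector and $\rho(G')$ is, by definition, the largest eigenvalue of the symmetric matrix $A'$, the Rayleigh bound $\rho(G') \ge \mathbf{x}^\top A' \mathbf{x}$ combines with $\mathbf{x}^\top A \mathbf{x} = \rho$ into the chain
\[
\rho(G') \ge \mathbf{x}^\top A' \mathbf{x} \ge \mathbf{x}^\top A \mathbf{x} = \rho = \rho(G).
\]
This already yields the non-strict inequality, so the real content of the lemma is to exclude equality. I would do this by contradiction: assuming $\rho(G') = \rho(G)$ forces every inequality in the chain to be an equality, in particular $\mathbf{x}^\top A' \mathbf{x} = \rho(G')$. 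Since a unit vector attains the Rayleigh quotient of a symmetric matrix at its top eigenvalue only when it lies in the corresponding eigenspace, this equality makes $\mathbf{x}$ an eigenvector of $A'$, so $A'\mathbf{x} = \rho \mathbf{x}$.

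Finally I would compare the $u$-th coordinates of the two eigenvalue equations $A'\mathbf{x} = \rho\mathbf{x}$ and $A\mathbf{x} = \rho\mathbf{x}$. The vertex $u$ has exactly the same neighbours in $G'$ as in $G$ except for the one extra neighbour $w$, hence
\[
\rho x_u = (A'\mathbf{x})_u = x_w + (A\mathbf{x})_u = x_w + \rho x_u,
\]
which forces $x_w = 0$ and contradicts the strict positivity of $\mathbf{x}$. Therefore $\rho(G') > \rho(G)$.

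The only delicate point is this strictness step, and it is delicate precisely in the borderline case $x_u = x_v$: there the quadratic-form comparison is non-strict, so the increase cannot be read off the displayed difference and must instead be extracted from the equality case of the Rayleigh principle plus the positivity of the Perron vector. (When $x_u > x_v$ the middle inequality is already strict, and the coordinate computation above covers this case as well.) I do not anticipate any genuine difficulty beyond this bookkeeping; the statement is a standard Perron-vector edge-rotation estimate, and the argument is essentially the three displays above.
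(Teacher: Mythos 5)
Your proof is correct: the Rayleigh comparison $\mathbf{x}^\top A(G')\mathbf{x}-\mathbf{x}^\top A(G)\mathbf{x}=2x_w(x_u-x_v)\ge 0$ gives $\rho(G')\ge\rho(G)$, and your equality analysis---forcing $\mathbf{x}$ into the top eigenspace of $A(G')$ and then reading off the $u$-th coordinate of $A(G')\mathbf{x}=\rho\mathbf{x}$ to conclude $x_w=0$, contradicting positivity of the Perron vector---correctly disposes of the delicate borderline case $x_u=x_v$. The paper itself states this lemma without proof, citing Brualdi and Solheid \cite{BS}, and your argument is exactly the standard proof of this edge-rotation lemma from the literature, so there is no methodological divergence to report.
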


If $\mathbf{x}'$ is the Perron vector of $G-vw+uw$ in Lemma \ref{perron}, then  $x'_u>x'_v$ by Lemma \ref{perron}. So Lemma \ref{perron} has the equivalent form:
Let $G$ be a connected graph with $\{u,v\}\subset V(G)$ and  $N_G(v)\setminus N_G[u]\ne \emptyset$.
If $\mathbf{x}$ is the Perron vector of $G$ with $x_u\ge x_v$, then for any nonempty $N\subseteq  N_G(v)\setminus N_G[u]$, $\rho(G-\{vw: w\in N\}+\{uw: w\in N\})>\rho(G)$.

For a graph $G$ with $u,v\in V(G)$, the $(u,v)$-entry of $A(G)^2$ is equal to the number of walks of length two from $u$ to $v$. If $\mathbf{x}$ is the Perron vector of $G$, then $\mathbf{x}$ is an eigenvector of $A(G)^2$ corresponding to the eigenvalue $\rho(G)^2$, so
\begin{equation}\label{ruu}
\rho(G)^2 x_u=d_G(u)x_u+\sum_{v\in N_G(u)}d_{N_G(u)}(v)x_v+\sum_{v\in V(G)\setminus N_G[u]}d_{N_G(u)}(v)x_v.
\end{equation}

Suppose that $V(G)$ is partitioned as $V_1\cup \dots\cup V_m$. For $1\le i<j\le m$, set $A_{ij}$ to be the submatrix of $A(G)$ with rows corresponding to vertices in $V_i$ and columns corresponding to vertices in $V_j$. The quotient matrix of $A(G)$ with respect to the partition $V_1\cup \dots \cup V_m$ is denoted by $B=(b_{ij})$, where $b_{ij}=\tfrac{1}{|V_i|}\sum_{u\in V_i}\sum_{v\in V_j}a_{uv}$. If $A_{ij}$ has constant row sum, then we say $B$ is an equitable quotient matrix.
The following lemma is an immediate consequence of \cite[Lemma 2.3.1]{BH}.

\begin{lemma}\label{quo}
For a connected graph $G$, if $B$ is an equitable quotient matrix of $A(G)$, then the eigenvalues of $B$ are also eigenvalues of $A(G)$ and $\rho(G)$ is equal to the largest eigenvalue of $B$.
\end{lemma}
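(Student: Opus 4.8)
The plan is to treat the two assertions of the lemma separately, since both follow from the equitable hypothesis together with elementary spectral facts, and the cited \cite[Lemma 2.3.1]{BH} already supplies the core inclusion. Throughout I write $V(G)=V_1\cup\dots\cup V_m$ for the equitable partition and use that equitability means each block $A_{ij}$ has constant row sum, which is exactly the entry $b_{ij}$ of $B$.

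First I would prove that every eigenvalue of $B$ is an eigenvalue of $A(G)$ by an explicit lifting. Suppose $B\mathbf{y}=\lambda\mathbf{y}$ with $\mathbf{y}=(y_1,\dots,y_m)^\top$, and define $\tilde{\mathbf{y}}$ on $V(G)$ by setting $\tilde y_u=y_i$ whenever $u\in V_i$. For $u\in V_i$, the $u$-th coordinate of $A(G)\tilde{\mathbf{y}}$ equals $\sum_{j=1}^m\big(\sum_{v\in V_j}a_{uv}\big)y_j=\sum_{j=1}^m b_{ij}y_j=\lambda y_i=\lambda\tilde y_u$, where the crucial second equality uses that the row sum of $A_{ij}$ is the constant $b_{ij}$. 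Hence $A(G)\tilde{\mathbf{y}}=\lambda\tilde{\mathbf{y}}$, so $\lambda$ is an eigenvalue of $A(G)$; this is the content of \cite[Lemma 2.3.1]{BH}.

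For the equality $\rho(G)=\lambda_1(B)$ I would apply the Perron--Frobenius theorem. Since $B$ is entrywise nonnegative, it has a nonnegative eigenvector $\mathbf{y}\ge\mathbf{0}$ corresponding to its largest eigenvalue $\lambda_1(B)$; lifting it as above yields a nonzero nonnegative eigenvector $\tilde{\mathbf{y}}$ of $A(G)$ with eigenvalue $\lambda_1(B)$. Because $G$ is connected, $A(G)$ is irreducible, and an irreducible nonnegative matrix admits, up to scaling, a unique nonnegative eigenvector, namely the Perron vector, whose eigenvalue is $\rho(G)$. Hence $\tilde{\mathbf{y}}$ must be the Perron vector of $A(G)$ and $\lambda_1(B)=\rho(G)$.

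I do not expect a genuine obstacle here, as the statement is essentially a restatement of \cite[Lemma 2.3.1]{BH} supplemented by a one-line Perron--Frobenius argument. The only place demanding care is the lifting step: it is valid precisely because the partition is equitable, i.e. each $A_{ij}$ has constant row sum, so that the constants $b_{ij}$ in the definition of $B$ coincide with the actual row sums. Without this hypothesis $\tilde{\mathbf{y}}$ would fail to be an eigenvector of $A(G)$ and the entire argument would collapse, so verifying the equitable condition is the one point I would check explicitly before invoking it.
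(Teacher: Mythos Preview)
Your proposal is correct and is exactly the standard argument the paper invokes: the paper gives no proof of its own but simply cites \cite[Lemma 2.3.1]{BH}, whose content is precisely your lifting construction, and the identification of $\rho(G)$ with $\lambda_1(B)$ via the uniqueness of the nonnegative eigenvector of an irreducible nonnegative matrix is the one-line Perron--Frobenius supplement the paper has in mind. There is nothing to add or correct.
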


Let $G_1$ and $G_2$ be two graphs with $u\in V(G_1)$ and $v\in V(G_2)$. Denote by $G_1uvG_2$ the graph obtained by identifying the vertices $u$ and $v$, which we call this graph the coalescence of $G_1$ and $G_2$ at $u$ and $v$.
If $u=v$, then we denote by $G_1uG_2$ for simplicity.

\begin{lemma}\label{comm}\cite{CDG}
Let $G_1$ be a graph with $u\in V(G_1)$ and $G_2$ a connected graph with $v,w\in V(G_2)$. If $\rho(G_2-v)<\rho(G_2-w)$, then $\rho(G_1uvG_2)>\rho(G_1uwG_2)$.
\end{lemma}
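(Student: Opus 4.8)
The plan is to pass to characteristic polynomials and compare the two coalescences by evaluating a single polynomial at one point. Write $\phi(H,x)=\det(xI-A(H))$, and set $f=G_1uvG_2$, $g=G_1uwG_2$, $\rho_g=\rho(g)$. I will work in the setting where $G_1$ is connected and $u$ is not isolated, so that $f$ and $g$ are connected and every proper subgraph of either has strictly smaller spectral radius. Since $\phi(f,x)$ is monic with largest root $\rho(f)$, it is positive for $x>\rho(f)$; hence it suffices to show $\phi(f,\rho_g)<0$, which then forces $\rho_g<\rho(f)$, as required.

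Next I would use the coalescence identity obtained by applying Schwenk's formula at the identified (cut) vertex,
\[
\phi(G_1uvG_2,x)=\phi(G_1,x)\phi(G_2-v,x)+\phi(G_1-u,x)\phi(G_2,x)-x\phi(G_1-u,x)\phi(G_2-v,x),
\]
together with its counterpart for $w$. Subtracting, the two $\phi(G_2,x)$--terms cancel and the difference factors as
\[
\phi(f,x)-\phi(g,x)=\Psi(x)\bigl[\phi(G_2-w,x)-\phi(G_2-v,x)\bigr],\qquad \Psi(x):=x\phi(G_1-u,x)-\phi(G_1,x).
\]
Evaluating at $x=\rho_g$ (where $\phi(g,\rho_g)=0$) gives $\phi(f,\rho_g)=\Psi(\rho_g)\bigl[\phi(G_2-w,\rho_g)-\phi(G_2-v,\rho_g)\bigr]$. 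Expanding $\phi(G_1,x)$ by Schwenk's formula at $u$ shows that $\Psi(x)=\sum_{y\in N_{G_1}(u)}\phi(G_1-u-y,x)+2\sum_{C\ni u}\phi(G_1-V(C),x)$, where the last sum is over cycles of $G_1$ through $u$; every graph here is a proper subgraph of the connected graph $g$, so its spectral radius is below $\rho_g$, whence each summand is positive at $\rho_g$ and $\Psi(\rho_g)>0$.

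It therefore remains to prove the single inequality $\phi(G_2-w,\rho_g)<\phi(G_2-v,\rho_g)$, and this is the main obstacle, being the only place where the hypothesis $\rho(G_2-v)<\rho(G_2-w)$ enters. Both polynomials are monic of equal degree and, since $G_2-v$ and $G_2-w$ are proper subgraphs of $g$, both are positive at $\rho_g$, so the sign of their difference is genuinely delicate and is not settled by the spectral radii by themselves. My plan for this step is to expand $\phi(G_2-v,x)$ by Schwenk at $w$ and $\phi(G_2-w,x)$ by Schwenk at $v$; the common leading term $x\phi(G_2-v-w,x)$ then cancels in the difference, reducing matters to lower-order terms built from subgraphs of $G_2$ that I would control using the ordering $\rho(G_2-v)<\rho(G_2-w)\le\rho_g$ together with the positivity of characteristic polynomials to the right of the spectral radius. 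Once this comparison is secured, $\phi(f,\rho_g)=\Psi(\rho_g)\bigl[\phi(G_2-w,\rho_g)-\phi(G_2-v,\rho_g)\bigr]<0$, and the reduction of the first paragraph finishes the proof.
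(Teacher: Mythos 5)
You cannot be compared against an in-paper argument here: the paper does not prove Lemma \ref{comm} at all, it quotes it from \cite{CDG}. Judged on its own terms, your skeleton is the standard and correct route: the Schwenk coalescence identity, the factorization $\phi(f,x)-\phi(g,x)=\Psi(x)\left[\phi(G_2-w,x)-\phi(G_2-v,x)\right]$ with $\Psi(x)=x\phi(G_1-u,x)-\phi(G_1,x)$, the expansion of $\Psi$ showing $\Psi(\rho_g)>0$, and the reduction to $\phi(f,\rho_g)<0$ are all sound (your added assumptions that $G_1$ is connected and $u$ is not isolated are harmless and in fact necessary). The genuine gap is exactly the step you flag, and it is worse than ``delicate'': it cannot be closed from the stated hypothesis, because the needed inequality $\phi(G_2-w,\rho_g)<\phi(G_2-v,\rho_g)$ is false in this generality. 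The evaluation point $\rho_g$ grows without bound as $G_1$ grows, while for large $x$ the sign of $\phi(G_2-w,x)-\phi(G_2-v,x)$ is dictated by the coefficient of $x^{|V(G_2)|-3}$, i.e.\ by $d_{G_2}(w)-d_{G_2}(v)$; and the hypothesis $\rho(G_2-v)<\rho(G_2-w)$ does not force $d_{G_2}(v)\ge d_{G_2}(w)$.

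Concretely, let $G_2$ consist of $K_5$ on $\{a_1,\dots,a_5\}$, a path $a_1pw$, and three pendant vertices at $w$, and take $v=p$. Then $G_2-p=K_5\cup K_{1,3}$ has spectral radius $4$, while $G_2-w$ contains $K_5$ plus a pendant vertex, with spectral radius the largest root of $x^3-3x^2-5x+3$, about $4.05$; so the hypothesis of the lemma holds. Yet a direct computation gives $\phi(G_2-w,x)-\phi(G_2-p,x)=2x^2(x+1)^3(x^2-3x-6)>0$ for all $x>(3+\sqrt{33})/2\approx 4.37$. Taking $G_1=K_m$ with $m\ge 6$ (so $\Psi(x)=(m-1)(x+1)^{m-2}$ and $\rho_g\ge m-1\ge 5$), your own identity yields $\phi(f,\rho_g)>0$, and since interlacing gives $\lambda_2(f)\le\max(m-2,4)<\rho_g$, this forces $\rho(f)<\rho(g)$ --- the opposite of the asserted conclusion. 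So not only does your plan for the last step fail; the statement as quoted is missing a hypothesis. What your factorization actually proves is the corrected version: if $\phi(G_2-v,x)>\phi(G_2-w,x)$ for all $x\ge\rho(G_2)$, then $\rho(G_1uvG_2)>\rho(G_1uwG_2)$. That pointwise polynomial comparison, strictly stronger than the ordering of the two spectral radii, is what holds in this paper's applications: in Lemma \ref{comp2} one has $\phi(K_{2,s},x)-\phi(K_{3,s-1},x)=(s-3)x^{s}>0$ for $s>3$ and all $x>0$. If you replace the hypothesis accordingly (or verify it in each use), your argument becomes complete; as written, the final step is unprovable.
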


\begin{lemma}\label{comm2}\cite{CDG}
Let $G$ be a connected graph with $u\in V(G)$, $H_1$ and $H_2$ be two graphs of the same order with $v_i\in V(H_i)$ for $i=1,2$. If $\rho(H_1)> \rho(H_2)$ and $\rho(H_1-v_1)\le \rho(H_2-v_2)$, then $\rho(Guv_1H_1)>\rho(Guv_2H_2)$.
\end{lemma}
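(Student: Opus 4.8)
The plan is to combine the characteristic polynomial of a coalescence with the monotonicity of a certain ratio of characteristic polynomials. Throughout write $\phi(F,x)=\det(xI-A(F))$ for the characteristic polynomial of a graph $F$, with the convention $\phi(\emptyset,x)=1$, and recall that $\phi(F,x)$ is monic with largest root $\rho(F)$ and $\phi(F,x)>0$ for $x>\rho(F)$. We may assume $H_1,H_2$ are connected, which is the situation in all applications. The starting point is the classical coalescence identity (obtainable from a standard vertex-deletion identity, and the reason \cite{CDG} is invoked): for graphs $G_1,G_2$ with $u\in V(G_1)$, $v\in V(G_2)$,
\[
\phi(G_1uvG_2,x)=\phi(G_1,x)\,\phi(G_2-v,x)+\phi(G_1-u,x)\,\phi(G_2,x)-x\,\phi(G_1-u,x)\,\phi(G_2-v,x).
\]
Applying this with $G_1=G$ and $G_2=H_i$, and writing $\psi(x):=\phi(G,x)-x\phi(G-u,x)$, gives $\phi(Guv_iH_i,x)=\psi(x)\phi(H_i-v_i,x)+\phi(G-u,x)\phi(H_i,x)$. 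Set $\rho_i:=\rho(Guv_iH_i)$. Since $\rho_1$ is the largest root of the monic polynomial $\phi(Guv_1H_1,\cdot)$, it suffices to prove $\phi(Guv_1H_1,\rho_2)<0$, as this forces $\rho_2<\rho_1$.

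Next I would record the sign data at $x=\rho_2$. Because $G-u$, $H_2$ and $H_2-v_2$ are (isomorphic to) induced subgraphs of the connected graph $Guv_2H_2$, each has spectral radius at most $\rho_2$, and in fact $\rho(G-u)<\rho_2$ and $\rho(H_2-v_2)<\rho_2$; hence $\phi(G-u,\rho_2)>0$ and $\phi(H_2-v_2,\rho_2)>0$. Using $\phi(Guv_2H_2,\rho_2)=0$ to solve for $\psi(\rho_2)$ and substituting into the expression for $\phi(Guv_1H_1,\rho_2)$ collapses everything to
\[
\phi(Guv_1H_1,\rho_2)=\frac{\phi(G-u,\rho_2)}{\phi(H_2-v_2,\rho_2)}\,D(\rho_2),\qquad D(x):=\phi(H_1,x)\phi(H_2-v_2,x)-\phi(H_2,x)\phi(H_1-v_1,x).
\]
As the prefactor is positive, the entire lemma reduces to the single inequality $D(\rho_2)<0$.

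To analyse $D$ I would introduce $\Gamma_{H}(x):=\phi(H,x)/\phi(H-v,x)$ for a rooted graph $(H,v)$. Ordering the vertices of $H$ so that $v$ is last and taking the Schur complement of $xI-A(H-v)$ yields $\Gamma_{H}(x)=x-c^{\top}(xI-A(H-v))^{-1}c$, where $c$ is the adjacency vector of $v$; thus $\Gamma_H$ is strictly increasing on $(\rho(H-v),\infty)$, tends to $-\infty$ at the left end, and vanishes exactly at its unique zero $\rho(H)$. Factoring $D(x)=\phi(H_1-v_1,x)\phi(H_2-v_2,x)\big(\Gamma_{H_1}(x)-\Gamma_{H_2}(x)\big)$ and using the positivity of the two deleted-vertex polynomials at $\rho_2$, the target $D(\rho_2)<0$ becomes $\Gamma_{H_1}(\rho_2)<\Gamma_{H_2}(\rho_2)$. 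Here the hypotheses enter: $\rho(H_1)>\rho(H_2)$ places the zero of $\Gamma_{H_1}$ to the right of that of $\Gamma_{H_2}$, while $\rho(H_1-v_1)\le\rho(H_2-v_2)$ places the pole of $\Gamma_{H_1}$ to the left of that of $\Gamma_{H_2}$. If $\rho_2<\rho(H_1)$ the conclusion is immediate: since $\rho_2\ge\rho(H_2)$ we get $\Gamma_{H_2}(\rho_2)\ge0$, whereas $\rho_2$ lies below the zero $\rho(H_1)$ of the increasing function $\Gamma_{H_1}$ but above its pole (as $\rho_2>\rho(H_2-v_2)\ge\rho(H_1-v_1)$), so $\Gamma_{H_1}(\rho_2)<0\le\Gamma_{H_2}(\rho_2)$.

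The remaining case $\rho_2\ge\rho(H_1)$ is the main obstacle and is where the real content lies: both $\Gamma_{H_1}(\rho_2)$ and $\Gamma_{H_2}(\rho_2)$ are now positive, so comparing their \emph{zeros} no longer settles the sign, and two increasing curves can a priori cross. My plan is to prove the stronger statement $D(x)<0$ for all $x\ge\rho(H_1)$. The left endpoint is clean: at $x=\rho(H_1)$ one has $\phi(H_1,\rho(H_1))=0$, so $D(\rho(H_1))=-\phi(H_2,\rho(H_1))\phi(H_1-v_1,\rho(H_1))<0$, since $\rho(H_1)>\rho(H_2)$ and $\rho(H_1)>\rho(H_1-v_1)$ make both factors positive. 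To propagate $D<0$ rightward I would pass to the resolvent forms $R_i(x):=c_i^{\top}(xI-A(H_i-v_i))^{-1}c_i$, so that $\Gamma_{H_i}=x-R_i$ and $D<0\iff R_1>R_2$; each $R_i$ is a positive, strictly decreasing Stieltjes function with $R_i(\rho(H_i))=\rho(H_i)$, and the combined hypothesis, which in particular gives the strict spectral-drop inequality $\rho(H_1)-\rho(H_1-v_1)>\rho(H_2)-\rho(H_2-v_2)$, together with the confinement of $\rho_2$ forced by the defining relation $\Gamma_{H_2}(\rho_2)=R_G(\rho_2)$ (where $R_G(x)=x-\phi(G,x)/\phi(G-u,x)>0$ is decreasing), should exclude a crossing $R_1(x)=R_2(x)$ on $[\rho(H_1),\rho_2]$. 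I expect turning this exclusion into a rigorous argument — essentially a careful comparison of two Stieltjes functions driven by both eigenvalue inequalities — to be the hardest and most delicate step; the degenerate situations ($u$ isolated in $G$, $v_i$ isolated in $H_i$, or a trivial factor) are treated separately and remain consistent with the formula through the convention $\phi(\emptyset,x)=1$.
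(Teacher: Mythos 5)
You should know at the outset that the paper offers no proof of this lemma at all: it is quoted from \cite{CDG} as a black box, so there is no ``paper's argument'' to match yours against. Your reduction is the right and standard one: Schwenk's coalescence identity, the observation that it suffices to show $\phi(Guv_1H_1,\rho_2)<0$ at $\rho_2=\rho(Guv_2H_2)$, the collapse to the sign of $D(x)=\phi(H_1,x)\phi(H_2-v_2,x)-\phi(H_2,x)\phi(H_1-v_1,x)$, and the complete treatment of the case $\rho_2<\rho(H_1)$ are all correct. But the step you flag as ``the hardest and most delicate'' is not merely delicate — it is impossible, and your own resolvent expansion shows why. Writing $R_i(x)=c_i^{\top}(xI-A(H_i-v_i))^{-1}c_i=d_{H_i}(v_i)x^{-1}+O(x^{-2})$, the leading term of $D$ is
\[
D(x)=\bigl(d_{H_2}(v_2)-d_{H_1}(v_1)\bigr)x^{2m-3}+O(x^{2m-4}),
\]
where $m$ is the common order, so whenever $d_{H_2}(v_2)>d_{H_1}(v_1)$ your target $D(x)<0$ on $[\rho(H_1),\infty)$ fails outright. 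And the two spectral hypotheses do not forbid this: take $H_2$ to be $K_5$ with a new vertex $v_2$ joined to one clique vertex and carrying four pendant vertices ($d(v_2)=5$, $H_2-v_2\cong K_5\cup 4K_1$, $\rho(H_2-v_2)=4$, $\rho(H_2)\approx 4.01$), and $H_1$ to be $K_5$ with a new vertex $v_1$ joined to three clique vertices and carrying a pendant path on four vertices ($d(v_1)=4$, $H_1-v_1\cong K_5\cup P_4$, $\rho(H_1-v_1)=4$, $\rho(H_1)>4.4$ since $H_1$ contains $K_5$ plus a vertex joined to three of its vertices). Both graphs are connected of order $10$, $\rho(H_1)>\rho(H_2)$, and $\rho(H_1-v_1)\le\rho(H_2-v_2)$, yet $d(v_1)<d(v_2)$.

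Worse, this is not a defect of your method but of the statement as transcribed: taking $G=K_{1,N}$ rooted at its center, the functional equation $x=R_G(x)+R_i(x)$ with $R_G(x)=N/x$ gives $\rho(Guv_iH_i)^2=N+d_{H_i}(v_i)+O(N^{-1/2})$, so for $N$ large the pair above yields $\rho(Guv_2H_2)>\rho(Guv_1H_1)$, contradicting the lemma's conclusion. So no completion of your case $\rho_2\ge\rho(H_1)$ exists; the hypotheses of \cite{CDG} must be stronger than what the paper reproduces (or the lemma is implicitly applied only in a restricted regime). It is worth noting that in both places the paper invokes the lemma — the pairs $(F_0',F_0)$ and $(F_2',F_2)$ — one has $d_{H_1}(v_1)=d_{H_2}(v_2)+1$, so the asymptotic obstruction vanishes and the needed inequality $D(\rho_2)<0$ can be verified directly for those small explicit graphs; your partial proof (together with an asymptotic-domination check of $R_1>R_2$) is in fact the honest way to certify those applications.
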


\begin{lemma}\label{comp} \cite{EH}  We have
\[
\rho(K_{1,1,n-2})=\frac{1}{2}+\sqrt{2n-\frac{15}{4}} \mbox{ and } \rho(K_{3,n-3})=\sqrt{3(n-3)}\,.
\]
Moreover, $\rho(K_{3,n-3})>\rho(K_{1,1,n-2})$ if and only if $n\ge 10$.
\end{lemma}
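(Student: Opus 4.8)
The plan is to read off both spectral radii from equitable quotient matrices using Lemma \ref{quo}, and then to decide the comparison by a single polynomial evaluation rather than by manipulating nested radicals. For $K_{3,n-3}$ I would partition its vertex set into the two colour classes $V_1$ and $V_2$ with $|V_1|=3$ and $|V_2|=n-3$. Each vertex of one class is adjacent to precisely the whole of the other class, so this partition is equitable, with quotient matrix
\[
\begin{pmatrix} 0 & n-3 \\ 3 & 0 \end{pmatrix}.
\]
Its eigenvalues are $\pm\sqrt{3(n-3)}$, so Lemma \ref{quo} gives $\rho(K_{3,n-3})=\sqrt{3(n-3)}$. For $K_{1,1,n-2}$ the two singleton classes play symmetric roles, so I would merge them: take $V_1$ to be the two apex vertices (each adjacent to the other apex and to all of the third class) and $V_2$ the independent set of size $n-2$ (each vertex adjacent to both apices only). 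This partition is again equitable, with quotient matrix
\[
\begin{pmatrix} 1 & n-2 \\ 2 & 0 \end{pmatrix}
\]
and characteristic polynomial $p(x)=x^2-x-2(n-2)$; by Lemma \ref{quo}, $\rho(K_{1,1,n-2})$ is its larger root $\tfrac12+\tfrac12\sqrt{8n-15}=\tfrac12+\sqrt{2n-\tfrac{15}{4}}$, as claimed.

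For the final comparison I would substitute $x=\sqrt{3(n-3)}=\rho(K_{3,n-3})$ into $p$:
\[
p\bigl(\sqrt{3(n-3)}\bigr)=3(n-3)-\sqrt{3(n-3)}-2(n-2)=n-5-\sqrt{3(n-3)}.
\]
The smaller root of $p$ is negative while $\sqrt{3(n-3)}\ge 0$, so $\rho(K_{3,n-3})$ always exceeds that smaller root; consequently $\rho(K_{3,n-3})>\rho(K_{1,1,n-2})$ (the larger root) if and only if $p\bigl(\sqrt{3(n-3)}\bigr)>0$, i.e. $n-5>\sqrt{3(n-3)}$. For $n>5$ both sides are nonnegative, so squaring is reversible and the condition becomes $(n-5)^2>3(n-3)$, that is $n^2-13n+34>0$. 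The roots of $n^2-13n+34$ are $\tfrac{13\pm\sqrt{33}}{2}\approx 3.63,\,9.37$, so among integers $n>5$ the inequality holds exactly for $n\ge 10$.

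The one delicate point---and hence the main (minor) obstacle---is the squaring step. The quadratic $n^2-13n+34$ is also positive for small $n$ (below its smaller root), so one could be tempted to read those off as solutions; but for $n\le 5$ the inequality $n-5>\sqrt{3(n-3)}$ fails directly, and squaring is only legitimate once $n-5\ge 0$. Confining the argument to $n>5$ before squaring therefore isolates $n\ge 10$ as the exact range and keeps the stated equivalence tight.
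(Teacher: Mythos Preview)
The paper does not supply its own proof of this lemma; it is simply cited from Esser and Harary \cite{EH}. Your argument is correct and entirely self-contained: the two equitable quotient matrices are exactly right, the characteristic polynomial $p(x)=x^2-x-2(n-2)$ yields $\rho(K_{1,1,n-2})=\tfrac12+\sqrt{2n-\tfrac{15}{4}}$, and your trick of evaluating $p$ at $\sqrt{3(n-3)}$ to reduce the comparison to the sign of $n-5-\sqrt{3(n-3)}$ is clean and avoids any radical manipulation. Your handling of the squaring step is also careful: restricting first to $n>5$ so that both sides are nonnegative, and then reading off $n\ge 10$ from the integer solutions of $n^2-13n+34>0$, gives the exact threshold. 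One tiny technicality: Lemma~\ref{quo} as stated requires connectedness, so strictly speaking your quotient argument for $K_{3,n-3}$ needs $n\ge 4$; this is harmless for the intended application and the formula $\rho(K_{a,b})=\sqrt{ab}$ holds regardless.
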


Denote by $d_G(u,v)$ the distance between vertices $u$ and $v$ in a graph $G$.
Let $u$ and $v$ be two vertices of $G$ and $P$  a path from $u$ to $v$ in $G$. %Then $P^{-1}$ denotes the path from $v$ to $u$ obtained by reversing the vertices of $P$.
If $w$ and $z$ are two vertices on $P$ with $d_P(u,w)<d_P(u,z)$, then $P[w,z]$ denotes the segment of $P$ from $w$ to $z$.

If a component of a graph is $K_1$, $K_2$, or $K_{1,s}$ with $s\ge 2$, then we call it an isolated vertex, an isolated edge, or a star (in which the vertex of degree $s$ is its center). As usual, a copy of $K_3$ is called a triangle. A vertex of degree one is also called a pendant vertex.

We use $I_n$ to denote the identity matrix of order $n$.

\section{Proof of Theorem \ref{double3}}

%Given a graph $G$ of order $n$ and size $m$, P\'osa \cite{Po} showed  that if $m\ge 2n-3\ge 5$ then $G$ contains a chorded cycle.
%
%
%\begin{proposition} \label{RE}
%Let $G$ be a graph of order $n$ and size $m$ that does not contain a $2$-chorded cycle. Then $m\le 2n-3$ and the bound is sharp.
%\end{proposition}

\begin{lemma}\label{cdouble}%\cite{CXZ}
If $G$ is an $n$-vertex graph containing no DCCs, then $e(G)\le 2n-3$.
\end{lemma}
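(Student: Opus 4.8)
The plan is to prove the slightly more flexible statement that \emph{every} DCC-free graph on $k\ge 2$ vertices has at most $2k-3$ edges, and to establish it by strong induction on the number of vertices. The base case $k=2$ is immediate, since such a graph has at most one edge and $2\cdot 2-3=1$. For the inductive step I would split into three cases according to the connectivity of $G$, the substantive one being when $G$ is $2$-connected, and in every case I would reduce to strictly smaller DCC-free graphs so that the induction hypothesis applies.

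If $G$ is disconnected, I would apply the induction hypothesis to each component and add the resulting bounds; writing $p$ for the number of components with at least two vertices, the sum of the per-component bounds is at most $2n-3p$, which is at most $2n-3$ (the isolated vertices contribute no edges, so the case $p=0$ is trivial). If $G$ is connected but has a cut vertex, I would pass to its block decomposition into blocks $B_1,\dots,B_t$ with $t\ge 2$; since each edge lies in exactly one block and every block is again DCC-free with fewer than $n$ vertices, the induction hypothesis gives $e(G)=\sum_i e(B_i)\le\sum_i(2n_i-3)$, where $n_i=|V(B_i)|$. Using the block--cut-tree identity $\sum_i(n_i-1)=n-1$, this simplifies to $2n-2-t\le 2n-3$.

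The heart of the argument is the $2$-connected case, where $\delta(G)\ge 2$ automatically and $n\ge 3$. The key external input is the observation of Gould et al. \cite{GHH} that any graph with minimum degree at least $3$ contains a DCC; since $G$ is DCC-free this forces $\delta(G)\le 2$, and hence $G$ has a vertex $v$ of degree exactly $2$. Then $G-v$ is a DCC-free graph on $n-1\ge 2$ vertices, and deleting $v$ removes exactly its two incident edges, so by the induction hypothesis
\[
e(G)=e(G-v)+2\le\bigl(2(n-1)-3\bigr)+2=2n-3,
\]
which closes the induction.

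The main obstacle is precisely the degree input used in this last step: without the fact that minimum degree $3$ already guarantees a DCC one cannot locate a degree-$2$ vertex to delete, and the reduction collapses. Everything else—the component and block reductions together with the accompanying arithmetic—is routine bookkeeping. It is worth noting that the bound is attained only through the $2$-connected branch, with the book graph $K_{1,1,n-2}$ realizing equality, whereas the disconnected and cut-vertex branches leave strict slack.
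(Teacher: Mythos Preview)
Your argument is correct. Note that the paper does not supply its own proof of this lemma; it simply cites \cite{CXZ}, so there is no in-paper argument to compare against. Your induction on the number of vertices, splitting into the disconnected, cut-vertex, and $2$-connected cases, is a clean and standard route: the block--cut-tree identity $\sum_i(n_i-1)=n-1$ and the component summation are correct, and in the $2$-connected branch the appeal to the Gould--Hirohata--Horn observation (already quoted in the paper's introduction) that $\delta(G)\ge 3$ forces a DCC is exactly the right external input to locate a degree-$2$ vertex for deletion. The arithmetic in each branch checks out, and your restriction to $k\ge 2$ correctly sidesteps the vacuous $n=1$ case.
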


\begin{proof}
We prove by induction on $n$. If $n=3$, then $e(G)\le 3$, as desired. Suppose that $n\ge 4$ and the result is true for any graph of order less than $n$. As $G$ does not contain a DCC, $\delta(G)\le 2$. Let $v$ be the vertex in $G$ of minimum degree. By induction, $e(G-v)\le 2(n-1)-3$. So $e(G)=e(G-v)+d_G(v)\le 2(n-1)-3+2=2n-3$, as desired.
\end{proof}

Note that $K_{1,1,n-2}$ is an $n$-vertex graph containing no  DCCs which has $2n-3$ edges. So the bound  in the previous lemma is sharp.

\begin{proof}[Proof of Theorem \ref{double3}]
The case for $n=3,4$ can be checked easily. Suppose that $n\ge 5$.
Suppose that $G$ is a graph that maximizes the spectral radius among all graphs on $n$ vertices containing no DCCs. It suffices to show that $G\cong K_{1,1,n-2}$.

If $G$ is not connected with components $G_1,\dots, G_r$, then we obtain a graph $G'$ by choosing $v_i\in V(G_i)$ for $i=1,\dots, r$ and adding edges $v_1v_j$ for all $j=2,\dots, r$, which obviously does not contain a DCC, and we have by Lemma \ref{addedges} that $\rho(G')>\rho(G)$, which is a contradiction. Thus, $G$ is connected.

Let $\rho=\rho(G)$.
Denote by $\mathbf{x}$ the Perron vector of $G$. Let $u$ be a vertex with maximum entry in $\mathbf{x}$. Let $X=N_G(u)$ and $Y=V(G)\setminus N_G[u]$.
As $G$ does not contain a DCC, a component of $G[X]$ is an isolated vertex $K_1$, an isolated edge $K_2$, or a star $K_{1,r}$ for some $r\ge 2$. Let $X_0$ be the set of isolated  vertices in $G[X]$ and $X_1$ be the set of vertices of isolated edges in $G[X]$.  Let $X_2=X\setminus (X_0\cup X_1)$. Let $X_2'$ be the set of  vertices of degree at least two in $G[X]$ (i.e., the centers of the stars in $G[X]$).
Note that $\rho x_u=\sum_{v\in X}x_v$. From   \eqref{ruu}, we have
\begin{align*}
(\rho^2-\rho)x_u
&=d_ux_u+\sum_{v\in X}(d_X(v)-1)x_v+\sum_{v\in Y}d_X(v)x_v\\
%&=d_ux_u+\sum_{v\in X_2'}(d_X(v)-1)x_v+\sum_{v\in Y}d_X(v)x_v-\sum_{v\in X_0}x_v\\
&\le \left(d_u+\sum_{v\in X_2'}(d_X(v)-1)+\sum_{v\in Y}d_X(v)-\sum_{v\in X_0}\frac{x_v}{x_u}\right)x_u\\
&=\left(|X|+|X_2|-2|X_2'|+e(X,Y)-\sum_{v\in X_0}\frac{x_v}{x_u}\right)x_u,
\end{align*}
so
\[
 \rho^2-\rho\le |X|+|X_2|-2|X_2'|+e(X,Y)-\sum_{v\in X_0}\frac{x_v}{x_u}.
\]
As $K_{1,1,n-2}$ does not contain a DCC, we have by Lemma \ref{comp} that $\rho\ge \rho(K_{1,1,n-2})=\tfrac{1}{2}+\sqrt{2n-\tfrac{15}{4}}$, so
$\rho^2-\rho\ge 2n-4$.
Therefore, we have
\begin{equation}\label{3eq}
2n-4\le |X|+|X_2|-2|X_2'|+e(X,Y)-\sum_{v\in X_0}\frac{x_v}{x_u}.
\end{equation}

\begin{claim}\label{3cut}
There is no cut vertex in $V(G)\setminus \{u\}$.
\end{claim}
\begin{proof}
Suppose  to the contrary that there is a cut vertex $v\in V(G)\setminus \{u\}$. Then $G-v$ has at least two components, one of which does not contain $u$, which we denote by $H$. Let
\[
G'=G-\{vz:z\in N_H(v)\}+\{uz:z\in N_H(v)\}.
\]
Then  $u$ is a cut vertex of $G'$ and $G'[\{u\}\cup V(H)]\cong G[\{v\}\cup V(H)]$, so $G'$ does not contain a DCC.  However, we have by Lemma \ref{perron} that $\rho(G')>\rho$, a contradiction.
\end{proof}

\begin{claim}\label{3x1}
For each $vv'\in E(G[X_1])$, if $vw\in Y$ for some $w\in Y$, then either $N_X(w)\setminus\{v\}\subseteq X_0$ or $N_X(w)=\{v,v'\}$.
\end{claim}
\begin{proof}
We first claim that $N_X(w)\setminus\{v,v'\}\subseteq X_0$.  Otherwise, there is a vertex $v_1\in X\setminus (X_0\cup \{v,v'\})$  with $v_1w\in E(G)$. Then $v_1\in (X_1\cup X_2)\setminus \{v,v'\}$. Denote by $v_1'$ one neighbor of $v_1$ in $X$. Then $uv'vwv_1v_1'u$ is a cycle with chords $uv$ and $uv_1$, a contradiction.

If $v_0w\in E(G)$ for some $v_0\in X_0$ and $v'w\in E(G)$, then $uvv'wv_0u$ is a cycle with chords $uv'$ and $vw$, a contradiction. So $v'w\notin E(G)$ or $v_0w\notin E(G)$.
\end{proof}

Let $Y_1$ be the set of those vertices in $Y$ with at least one neighbor  in $X_1$.
By Claim \ref{3x1}, $d_{X_1}(w)=1,2$ for any $w\in Y_1$.
Let $Y_1^i=\{w\in Y_1:d_{X_1}(w)=i\}$ for $i=1,2$.
Then $Y_1=Y_1^1\cup Y_1^2$ and
\begin{equation}\label{3eqx1}
e(X_1,Y)=|Y_1^1|+2|Y_1^2|\mbox{ and }e(X_0,Y_1^2)=0.
\end{equation}

\begin{claim}\label{3x2}
For each $v\in X_2\setminus X_2'$, $N_Y(v)=\emptyset$.
\end{claim}
\begin{proof}
Suppose that $N_Y(v)\ne \emptyset$, say $w\in N_Y(v)$ for some $v\in X_2\setminus X_2'$. By Claim \ref{3cut}, there is a path $P$ between $w$ and $u$ which does not contain $v$. Let $v_0$ be the unique neighbor of $v$ in $X$ and $v'$ be a neighbor of $v_0$ different from $v'$ in $X$.

If $P$ does not contain $v_0$ or $v'$, then $uv'v_0vwPu$ is a cycle with chords $uv_0$ and  $uv$, a contradiction.
If $P$ contains $v_0$ but not $v'$ or $P$ contains both $v_0$ and $v'$ with $d_P(w,v_0)<d_P(w,v')$, then $uvwP[w,v_0]v_0v'u$ is a cycle with chords $uv_0$ and $vv_0$, a contradiction.
Similarly, if $P$ contains $v'$ but not $v_0$ or $P$ contains both $v'$ and $v_0$ with $d_P(w,v')<d_P(w,v_0)$, then $uvwP[w,v']v'v_0u$ is a cycle with chords $uv'$ and $vv_0$, also a contradiction.
\end{proof}

\begin{claim}\label{3x20}
For each $v\in X_2'$, if $vw\in E(G)$ for some $w\in Y$, then $N_X(w)\setminus\{v\}\subseteq X_0$.
\end{claim}
\begin{proof}
Suppose that there exists a vertex $v_0\in X_1\cup X_2\setminus\{v\}$ such that $v_0w\in E(G)$. Let $v'\in N_X(v)\setminus\{v_0\}$. By Claim \ref{3x2}, $vv_0\notin E(G)$.  Let $v_0'$ be a neighbor of $v_0$ in $X$.  Then $uv'vwv_0v_0'u$ is a cycle with chords $uv$ and $uv_0$, a contradiction.
\end{proof}

Let $Y_2$ be the set of those vertices in $Y$ with at least one neighbor  in $X_2$. Then by Claims \ref{3x2} and \ref{3x20},
\begin{equation}\label{3eqx2}
e(X_2,Y)=e(X_2',Y_2)=|Y_2|.
\end{equation}

By Claims \ref{3x1} and \ref{3x20}, we have $Y_1\cap Y_2=\emptyset$.

For $v\in X_1\cup X_2'$, let
\[
X_v=\{z\in X_0: N_Y(z)\cap N_Y(v)\ne \emptyset\}.
\]

\begin{claim}\label{3x12}
For each $v_0\in X_v$ with  $v\in X_1\cup X_2'$ and $w\in N_Y(v)\cap N_Y(v_0)$,
if $|N_Y(v_0)|\ge 2$, then $N_X(z)\subseteq \{v,v_0\}$ for each $z\in N_Y(v_0)\setminus\{w\}$. Moreover, if $|N_Y(v)|\ge 2$, then $|N_Y(v)\cap N_Y(v_0)|=1$ or $N_X(z)\subseteq \{v,v_0\}$ for each $z\in N_Y(v_0)$.
\end{claim}
\begin{proof}
The second part follows from the first part. We only show the first part.
Suppose that $N_X(z)\not\subseteq \{v,v_0\}$ for some $z\in N_Y(v_0)\setminus\{w\}$, say $v_1\in N_X(z)\setminus\{v,v_0\}$.
If $v\in X_1$, and $v_1$ is adjacent to $v$, then $uv_1zv_0wvu$ is a cycle with chords $uv_0$ and $vv_1$, a contradiction. Otherwise, denoting by $v'$ one neighbor of $v$ in $X$ (where $v'$ is neighbor of $v$ different from $v_1$ if $v\in X_2'$ and $v_1$ is a neighbor of $v$). Then $uv'vwv_0zv_1u$ is a cycle with chords $uv$ and $uv_0$, a contradiction.
\end{proof}

By Claim \ref{3x12},  we have $X_v\cap X_{v'}=\emptyset$ for any pair $\{v,v'\}\subseteq X_1\cup X_2'$.
By Claims \ref{3x1} and \ref{3x20} again, $e(X_v,Y_1\cup Y_2)=e(X_v,N_Y(v))$.

Let  $v\in X_1\cup X_2'$. Note that
\[
e(X_v,Y_1\cup Y_2)\le |X_v|+|N_Y(v)|.
\]
If $|N_Y(v)|=1$, this is obvious as  $e(X_v,Y_1\cup Y_2)=|X_v|$.  Suppose that
 $|N_Y(v)|\ge 2$. Let $X_v^{1}=\{v_0\in X_v:|N_Y(v)\cap N_Y(v_0)|=1\}$ and $X_v^2=X_v\setminus X_v^1$. Then  by Claim \ref{3x12}, $e(X_v^1,Y_1\cup Y_2)=|X_v^1|$ and $N_Y(v_0)\cap N_Y(v_0')=\emptyset$ for any pair $\{v_0,v_0'\}\subseteq X_v^2$.  So $e(X_v^2,Y_1\cup Y_2)=|\{w\in N_Y(v):v_0w\in E(G),v_0\in X_v^2\}|\le |N_Y(v)|$. Thus
\[
e(X_v,Y_1\cup Y_2)=e(X_v^1,Y_1\cup Y_2)+e(X_v^2,Y_1\cup Y_2)\le |X_v^1|+|N_Y(v)|\le |X_v|+|N_Y(v)|,
\]
as desired.

Let $X_0'=\cup_{v\in X_1\cup X_2'} X_v$. Then
\[
e(X_0',Y_1\cup Y_2)=\sum_{v\in X_1\cup X_2} e(X_v,Y_1\cup Y_2)\le\sum_{v\in X_1\cup X_2} \left(|X_v|+|N_Y(v)|\right)=|X_0'|+|Y_1^1|+|Y_2|.
\]

%For $v\in X_1\cup X_2'$,
Let
\[
Y_v=\{z\in Y\setminus (Y_1\cup Y_2): N_{X_v}(z)\ne \emptyset\}.
\]
%be the set of vertices with at least one neighbor in $X_v$.
By Claim \ref{3x12}, we have $e(X,Y_v)=e(X_v,Y_v)=|Y_v|$ and $Y_v\cap Y_{v'}=\emptyset$ for any $v'\in (X_1\cup X_2')\setminus\{v\}$.
Let $Y_0'=\cup_{v\in X_1\cup X_2'}Y_v$. Then $e(X_v,Y_0')=e(X_v,Y_v)=|Y_v|$ and
\[
e(X_0',Y_0')=\sum_{v\in X_1\cup X_2'} e(X_v,Y_0')=\sum_{v\in X_1\cup X_2'} |Y_v|=|Y_0'|.
\]
Note that $e(X_0',Y)=e(X_0',Y_0')+e(X_0',Y_1\cup Y_2)$. So
\begin{equation}\label{3eqx0}
e(X_0',Y)\le |X_0'|+|Y_0'|+|Y_1^1|+|Y_2|.
\end{equation}

Let $X_0^*=X_0\setminus X_0'$ and $Y_0=Y\setminus (Y_1\cup Y_2\cup Y_0')$. Then $e(X_0^*,Y)=e(X_0^*,Y_0)$ and $e(X_0',Y_0)=0$.
It then follows from  \eqref{3eqx1}--\eqref{3eqx0} that
\begin{align*}
e(X,Y)&=e(X_0^*,Y)+e(X_0',Y)+e(X_1,Y)+e(X_2',Y)\\
&\le e(X_0^*,Y)+|X_0'|+|Y_0'|+|Y_1^1|+|Y_2|+|Y_1^1|+2|Y_1^2|+|Y_2|\\
&= e(X_0^*,Y_0)+|X_0'|+|Y_0'|+2|Y_1|+2|Y_2|.
\end{align*}
Note that $n=1+|X|+|Y|$, $X=|X_0^*|+|X_0'|+|X_1|+|X_2|$ and $|Y|=|Y_0|+|Y_0'|+|Y_1|+|Y_2|$.
So, from   \eqref{3eq}, we have
\begin{equation}\label{3e}
|X_0^*|+|X_1|+2|X_2'|+2|Y_0|+|Y_0'|-2\le e(X_0^*,Y_0)-\sum_{v\in X_0}\frac{x_v}{x_u}.
\end{equation}

Suppose that $X_0^*\ne \emptyset$. Then $\sum_{v\in X_0}\frac{x_v}{x_u}>0$. From   \eqref{3e}, we have
\begin{equation}\label{bo}
e(X_0^*,Y_0)\ge |X_0^*|+2|Y_0|-1.
\end{equation}
Let $G_1=G[\{u\}\cup X_0^*\cup Y_0]$. As $G_1$ does not contain a DCC, we have
$e(G_1)\le 2|X_0^*|+2|Y_0|-1$ by Lemma \ref{cdouble}. So, from \eqref{bo}, we have
\[
2|X_0^*|+2|Y_0|-1\ge e(G_1)=|X_0^*|+e(X_0^*,Y_0)+e(Y_0)\ge 2|X_0^*|+2|Y_0|-1+e(Y_0),
\]
implying that  $e(Y_0)=0$. Thus \eqref{bo} is an equality, so from  \eqref{3e} again, we have $X_1=X_2'=\emptyset$,  so $G=G_1$.  As $G$ is bipartite, we have $\rho(G)\le \sqrt{e(G)}=\sqrt{2n-3}$, so
$\tfrac{1}{2}+\sqrt{2n-\tfrac{15}{4}}\le \sqrt{2n-3}$, a contradiction.
This shows that  $X_0^*=\emptyset$. So
 \eqref{3e} becomes
\begin{equation}\label{3ee}
|X_1|+2|X_2'|+2|Y_0|+|Y_0'|\le 2-\sum_{v\in X_0}\frac{x_v}{x_u}.
\end{equation}
If $X_1\cup X_2'=\emptyset$, then $X_0'=\emptyset$, so $X_0=\emptyset$, which shows that $G$ is trivial or disconnected, a contradiction.
So $X_1\cup X_2'\ne \emptyset$. From \eqref{3ee} we have
$(|X_1|, |X_2'|, |Y_0|, |Y_0'|)=(2,0,0,0)$, $(0,1,0,0)$ and $X_0=\emptyset$.

If $(|X_1|, |X_2'|, |Y_0|, |Y_0'|)=(0,1,0,0)$, then by Claims \ref{3cut} and \ref{3x2}, $Y_2=\emptyset$. So $G\cong K_{1,1,n-2}$, as desired.

If $(|X_1|, |X_2'|, |Y_0|, |Y_0'|)=(2,0,0,0)$, then we have from \eqref{3eq} and \eqref{3eqx1} that $Y_1^1=\emptyset$ and $V(G)=\{u\}\cup X_1\cup Y_1^2$.
We claim that $Y_1^2$ is independent, as otherwise, there is an edge $w_1w_2$ with $w_1,w_2\in Y$, which leads to a cycle $uv_1w_1w_2v_2u$ with chords $v_1v_2$ and $v_1w_2$, where $v_1,v_2\in X_1$, a contradiction. Therefore, $G\cong K_{1,1,n-2}$.

Combining the above cases, we conclude that $G\cong K_{1,1, n-2}$. This completes the proof.
\end{proof}

\section{Proof of Theorem \ref{double2}}

Let  $G$ be an $n$-vertex graph that does not contain a DCC$_1$. From a result in \cite{Ji}, we have  $e(G)\le 3n-9$ for $n\ge 9$. Note that $F_1$ is a graph on five vertices with no DCC$_1$s, and $e(F_1)=7>6=3n-9$.
In the following lemma, we show that   $e(G)\le 3n-9$ for $n\ge 6$. As in \cite{Ji},
we need a result due to Bondy \cite{Bo} (see also \cite[Theorem 4.11]{Bol}):
Let $C$ be a longest cycle (with length $c$) in an $n$-vertex graph $G$, then there are at most $\lfloor\tfrac{1}{2}c(n-c)\rfloor$ edges with at most one end vertex on $C$. We also need a result of Czipszer \cite{Cz} (see also \cite[p. 386]{Bol}) stating that if $\delta(G)\ge 4$ then $G$ contains a DCC$_1$.

\begin{lemma}\label{cdouble2}
Let $n\ge 6$.
If $G$ is an $n$-vertex graph that does not contain a DCC$_1$, then $e(G)\le 3n-9$ with equality when $G$ is bipartite if and only if $G\cong K_{3,n-3}$.
\end{lemma}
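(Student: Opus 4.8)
The plan is to establish the bound $e(G)\le 3n-9$ by induction on $n$, driven by Czipszer's theorem, and then to treat the bipartite equality case separately. Since $G$ contains no DCC$_1$, Czipszer's result forces $\delta(G)\le 3$. For the inductive step ($n\ge 7$) I would choose a vertex $v$ with $d_G(v)=\delta(G)\le 3$; as being DCC$_1$-free is hereditary, $G-v$ is a DCC$_1$-free graph on $n-1\ge 6$ vertices, so the induction hypothesis gives $e(G-v)\le 3(n-1)-9$, whence $e(G)\le 3(n-1)-9+3=3n-9$. This collapses the whole inequality onto the base case $n=6$.

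For $n=6$ I would run Bondy's theorem. Let $C$ be a longest cycle, of length $c$ (if $G$ is acyclic then $e(G)\le n-1=5$). Two chords of $C$ sharing a vertex would, together with $C$, be a DCC$_1$, so the chords of $C$ form a matching and $e(G[V(C)])\le c+\lfloor c/2\rfloor$; adding Bondy's bound $\lfloor\tfrac12 c(6-c)\rfloor$ on the edges meeting $C$ in at most one vertex gives $e(G)\le 9$ at once when $c\in\{3,5,6\}$. The one tight value is $c=4$, where the crude count permits $10$: here I would use maximality of $C$, noting that if $G[V(C)]\cong K_4$ then an outside vertex adjacent to two vertices of the $K_4$ would yield a $5$-cycle, so each of the two outside vertices sends at most one edge to $V(C)$ and $e(G)\le 6+3=9$, while if $e(G[V(C)])\le 5$ then $e(G)\le 5+4=9$ anyway.

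Turning to equality, I would first record that a bipartite graph whose smaller part has at most three vertices is automatically DCC$_1$-free, because a vertex bearing two chords of a cycle needs four neighbours on that cycle and hence four vertices in each part; in particular $K_{3,n-3}$ is DCC$_1$-free and has exactly $3n-9$ edges. Conversely, let $G$ be bipartite and DCC$_1$-free with $e(G)=3n-9$. I would show $G$ is connected: inserting a bridge between two components preserves bipartiteness and creates no new cycle, hence no DCC$_1$, and would contradict the inequality just proved. Equality then forces $\delta(G)=3$, and deleting a degree-$3$ vertex $v$ leaves a bipartite DCC$_1$-free graph attaining the bound on $n-1$ vertices, so by the induction hypothesis $G-v\cong K_{3,n-4}$ with parts $A$ (size $3$) and $B$ (size $n-4$). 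Connectedness and bipartiteness place the three neighbours of $v$ wholly in $A$ or wholly in $B$: the latter rebuilds $K_{3,n-3}$, whereas the former produces a graph with a part of size four, which for $n\ge 8$ contains an explicit $8$-cycle on which some vertex of $A$ carries two chords (a DCC$_1$), a contradiction, and for $n=7$ is merely $K_{4,3}=K_{3,4}=K_{3,n-3}$. The base case $n=6$ is immediate, since a bipartite graph on six vertices with nine edges must be $K_{3,3}$.

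The main obstacle I anticipate is the base case $n=6$, and specifically the value $c=4$, where the generic Bondy-plus-matching estimate overshoots by one and the maximality of the longest cycle must be exploited to exclude the densest configuration. In the equality analysis the corresponding delicate point is verifying that attaching a degree-$3$ vertex to the size-$3$ side of $K_{3,n-4}$ is forced to create a DCC$_1$; everything else there is routine once connectivity has been secured.
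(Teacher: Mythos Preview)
Your argument is correct and follows essentially the same route as the paper: Czipszer for the induction, Bondy plus the case split on $c$ (with the $K_4$ subcase) for $n=6$, and the inductive peel-off of a degree-$3$ vertex for bipartite equality, landing on $K_{3,n-4}$ and then exhibiting an explicit DCC$_1$ when $v$ attaches on the wrong side. One slip to fix: you have ``the former'' and ``the latter'' interchanged---neighbours wholly in $A$ (size $3$) is what rebuilds $K_{3,n-3}$, while neighbours wholly in $B$ is what creates the size-$4$ part.
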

\begin{proof}
We prove the result by induction on $n$.

Suppose that $n=6$. Let $C$ be a longest cycle of $G$ and $c$ the length of $C$. By Bondy's result, $e(G-E(G[V(C)]))\le \lfloor\tfrac{1}{2}c(6-c)\rfloor$. As $G$ does not contain a DCC$_1$, each vertex in $G[V(C)]$ has degree at most three. Then $e(G[V (C)])\le \lfloor\frac{3}{2}c\rfloor$. Then $e(G)=e(G-E(G[V(C)])))+e(G[V (C)])\le  \lfloor\tfrac{1}{2}c(6-c)\rfloor+\lfloor\tfrac{3}{2}c\rfloor$. So, if $c=3,5,6$, then $e(G)\le 9$. Suppose that $c=4$. If $G[V(C)]\cong K_4$, then each vertex outside $C$ is adjacent to at most one vertex on $C$, and it is possible that the two vertices outside $C$ are adjacent to a vertex on $C$, so they may be adjacent, implying that $e(G)\le 6+3=9$. If $G[V(C)]\ncong K_4$, then $e(G[V(C)])\le 5$, so $e(G)\le \lfloor\tfrac{1}{2}c(6-c)\rfloor+5=9$. This shows that $e(G)\le 3n-9$ for $n=6$.

Moreover, suppose that $G$ is a bipartite graph with $e(G)=3n-9=9$.  Then $c=4,6$. If $c=4$, then $e(G)\le \lfloor\tfrac{1}{2}c(6-c)\rfloor+4=8<9$. If $c=6$, then
$e(G)= \lfloor\frac{3}{2}c\rfloor=9$ if and only if $G$ is  regular of degree three, equivalently, $G\cong K_{3,3}$.

Suppose that $n\ge 7$.  Let $v$ be a vertex of minimum degree in $G$. By induction assumption, $e(G-v)\le 3(n-1)-9$. By Czipser's result,  $\delta(G)\le 3$. So $e(G)= e(G-v)+\delta(G)\le 3(n-1)-9+3=3n-9$.

Next, suppose that $G$ is a bipartite graph with $e(G)=3n-9$. By the above argument,
$d_G(v)=3$ and $e(G-v)=3(n-1)-9$.
By induction assumption, $G-v\cong K_{3,n-4}$. If $n=7$, then as $G$ is bipartite, we have $G\cong K_{3,4}$. Suppose that $n\ge 8$. Let $(V_1, V_2)$ be the bipartition of $G-v$ with $|V_1|=3$ and $|V_2|=n-4$. Suppose that $G\not\cong K_{3,n-3}$. As $G$ is bipartite, $G$ does not contain a triangle, so $N_G(v)\subset V_2$. Let $V_1=\{u_1,u_2,u_3\}$ and $N_G(v)=\{v_1,v_2,v_3\}\subset \{v_1,\dots, v_4\}\subseteq V_2$. Then $vv_1u_1v_4u_3v_3u_2v_1v$ is a cycle with chords $v_1u_2$ and $v_1u_3$, a contradiction.
\end{proof}

For positive integers $n$ and $r$ with $n\ge 3r+4$ and  $r\ge 1$,
let $u$ be the vertex of degree $3r$ in $K_1\vee rK_3$ (any vertex if $r=1$), and $vw$ an edge of  $K_{3,n-3r-3}$, where the degree of $v$ is $n-3r-3$ and the degree of $w$ is three, and
let $H_{n,r}=(K_1\vee rK_3)uvK_{3,n-3r-3}$ and $H_{n,r}'=(K_1\vee rK_3)uwK_{3,n-3r-3}$.

\begin{lemma}\label{comp2}
For  $n\ge 3r+6$ and $r\ge 1$,
$\rho(H_{n,r})\ge \rho(H_{n,r}')$ with equality if and only if $n=3r+6$.
\end{lemma}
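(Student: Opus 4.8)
The plan is to read off $H_{n,r}$ and $H_{n,r}'$ as coalescences of the \emph{same} pair of graphs and then apply Lemma \ref{comm} directly. Set $G_1 = K_1\vee rK_3$ with distinguished vertex $u$ (the apex of degree $3r$), and $G_2 = K_{3,n-3r-3}$ with the two endpoints $v,w$ of the chosen edge. By definition $H_{n,r}=G_1uvG_2$ and $H_{n,r}'=G_1uwG_2$, so Lemma \ref{comm} reduces the whole comparison to comparing $\rho(G_2-v)$ with $\rho(G_2-w)$. Writing $m:=n-3r-3$, the hypothesis $n\ge 3r+6$ gives $m\ge 3$. The first step is to identify which part of $K_{3,m}$ each of $v,w$ lies in: since $d_{G_2}(v)=n-3r-3=m$, the vertex $v$ sits in the part of size three, while $d_{G_2}(w)=3$ forces $w$ into the part of size $m$. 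Hence $G_2-v\cong K_{2,m}$ and $G_2-w\cong K_{3,m-1}$.

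Next I would record the two relevant spectral radii using the standard fact that a complete bipartite graph $K_{a,b}$ has $\rho=\sqrt{ab}$, giving $\rho(G_2-v)=\rho(K_{2,m})=\sqrt{2m}$ and $\rho(G_2-w)=\rho(K_{3,m-1})=\sqrt{3(m-1)}$. Comparing, $\sqrt{2m}<\sqrt{3(m-1)}$ holds exactly when $2m<3m-3$, i.e.\ $m>3$. Therefore, for $m\ge 4$ — equivalently $n\ge 3r+7$ — we have $\rho(G_2-v)<\rho(G_2-w)$, and Lemma \ref{comm} yields the strict inequality $\rho(H_{n,r})>\rho(H_{n,r}')$.

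It remains to handle the boundary case $m=3$, i.e.\ $n=3r+6$, where the two radii coincide ($\sqrt{6}=\sqrt{6}$) and the strict hypothesis of Lemma \ref{comm} fails. Here $G_2=K_{3,3}$ is vertex-transitive; in fact it admits an automorphism interchanging its two parts, so $v$ and $w$ lie in a single orbit. Consequently coalescing $G_1$ with $G_2$ at $v$ or at $w$ produces isomorphic graphs, $H_{n,r}\cong H_{n,r}'$, and thus $\rho(H_{n,r})=\rho(H_{n,r}')$. Combining the two cases gives $\rho(H_{n,r})\ge\rho(H_{n,r}')$ with equality precisely when $n=3r+6$, as claimed.

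Since the argument is essentially a single application of Lemma \ref{comm}, I do not expect a genuine obstacle. The two points demanding care are bookkeeping rather than difficulty: first, correctly placing $v$ and $w$ into the parts of $K_{3,n-3r-3}$ via their prescribed degrees, so that $G_2-v$ and $G_2-w$ are identified as $K_{2,m}$ and $K_{3,m-1}$; and second, the equality characterization at $m=3$, where Lemma \ref{comm} is silent and one must instead invoke the symmetry of $K_{3,3}$ to conclude that the two coalescences are isomorphic.
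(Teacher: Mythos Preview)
Your proposal is correct and follows essentially the same approach as the paper: both handle the boundary case $n=3r+6$ by observing $H_{n,r}\cong H_{n,r}'$ (you justify this via the vertex-transitivity of $K_{3,3}$, the paper simply asserts the isomorphism), and for $n\ge 3r+7$ both compute $\rho(K_{3,n-3r-3}-v)=\sqrt{2(n-3r-3)}<\sqrt{3(n-3r-4)}=\rho(K_{3,n-3r-3}-w)$ and invoke Lemma~\ref{comm}. Your write-up is somewhat more explicit about which part of the bipartition each of $v,w$ lies in, but the argument is the same.
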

\begin{proof}
If $n=3r+6$, then $H_{n,r}\cong H_{n,r}'$. Suppose that $n\ge 3r+7$.
Then  $n-3r-3> 3$, so $\rho(K_{3,n-3r-3}-v)=\rho(K_{2,n-3r-3})=\sqrt{2(n-3r-3)}<\sqrt{3(n-3r-4)}=\rho(K_{3,n-3r-4})
=\rho(K_{3,n-3r-3}-w)$.
Thus the result follows by Lemma \ref{comm}.
\end{proof}

\begin{lemma}\label{comp3}
For positive integers $n\ge 3r+4$,
$\rho(K_{3,n-3})>\rho(H_{n,r})$.
\end{lemma}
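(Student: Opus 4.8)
The plan is to evaluate $\rho(H_{n,r})$ through an equitable partition and pit the resulting scalar (secular) equation against $\lambda := \sqrt{3(n-3)} = \rho(K_{3,n-3})$ (Lemma \ref{comp}). Write $m := n-3r-3 \ge 1$. Recall $H_{n,r}$ is built by identifying the apex $u$ of $K_1\vee rK_3$ with a vertex $v$ of the size-$3$ part of $K_{3,m}$; call the merged vertex $z$, the other two vertices of that part $a,b$, the $m$ vertices of the opposite part $V_4$, and the $3r$ triangle vertices $V_2$. The automorphism group acts transitively on each of $\{z\}$, $V_2$, $\{a,b\}$, $V_4$, so by uniqueness of the Perron vector its entries are constant on these four cells, say $p,q,s,t$. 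First I would record the four eigenvalue equations
\[
\rho p = 3rq + mt, \quad \rho q = p + 2q, \quad \rho s = mt, \quad \rho t = p + 2s
\]
(equivalently, pass to the equitable quotient matrix and invoke Lemma \ref{quo}). Eliminating $q,s,t$ in terms of $p$ and dividing by $p>0$ yields the scalar equation
\[
\rho = \frac{3r}{\rho-2} + \frac{m\rho}{\rho^2 - 2m}.
\]

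Next I would set $f(x) = x - \frac{3r}{x-2} - \frac{mx}{x^2-2m}$, so $f(\rho)=0$. A short derivative check shows each of $\frac{3r}{x-2}$ and $\frac{mx}{x^2-2m}$ is positive and strictly decreasing on $(\max\{2,\sqrt{2m}\},\infty)$, hence $f$ is strictly increasing there. I then need $\rho$ inside this interval: since $H_{n,r}$ contains $K_1\vee K_3 = K_4$ we get $\rho\ge 3>2$, and since $\{z,a,b\}$ together with $V_4$ induce a copy of $K_{3,m}$ we get $\rho\ge\sqrt{3m}>\sqrt{2m}$. The same bounds (using $r\ge 1$) place $\lambda=\sqrt{3m+9r}$ in the interval as well. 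Consequently, since $f$ is strictly increasing and $f(\rho)=0$, the inequality $\rho<\lambda$ reduces to proving $f(\lambda)>0$.

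Finally I would evaluate $f$ at $\lambda$. Using $\lambda^2 = 3(n-3)=3m+9r$ gives $\lambda^2-2m=m+9r$, whence
\[
f(\lambda) = 3r\left(\frac{3\lambda}{m+9r} - \frac{1}{\lambda-2}\right),
\]
and (as $\lambda>2$, $r\ge 1$) $f(\lambda)>0$ is equivalent to $3\lambda(\lambda-2)>m+9r$, i.e.\ to $4m+9r>3\lambda$; squaring these nonnegative quantities turns it into the polynomial inequality $16m^2+72mr+81r^2 > 27m+81r$, which holds with room to spare for all $m,r\ge 1$ (grouping $16m^2\ge 16m$, $72mr\ge 72m$, $81r^2\ge 81r$ already leaves a surplus of $61m>0$). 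This gives $f(\lambda)>0$ and hence $\rho(H_{n,r})<\lambda=\rho(K_{3,n-3})$. The one genuinely delicate point — and the step I would guard most carefully — is confirming that $\rho(H_{n,r})$ actually lies in the interval on which $f$ is monotone (the $K_4$ and $K_{3,m}$ lower bounds), since without it the comparison $f(\rho)=0<f(\lambda)$ would not force $\rho<\lambda$; the monotonicity computation and the closing polynomial estimate are then routine.
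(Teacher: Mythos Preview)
Your proof is correct and follows essentially the same approach as the paper: both set up the identical four-cell equitable partition $\{z\}\cup V_2\cup\{a,b\}\cup V_4$, extract a secular equation for $\rho(H_{n,r})$, and then show that the corresponding function is positive at $\lambda=\sqrt{3(n-3)}$ together with monotonicity past $\lambda$. The only cosmetic difference is algebraic packaging: the paper expands the $4\times 4$ characteristic polynomial $\det(xI_4-B)$ directly, whereas you back-substitute to obtain the rational scalar equation $x=\tfrac{3r}{x-2}+\tfrac{mx}{x^2-2m}$; clearing denominators in your equation yields exactly the paper's quartic (in the variables $m=n-3r-3$ and $r$ it becomes $x^4-2x^3-(3m+3r)x^2+6mx+6rm$), so the two arguments are the same computation in different form.
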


\begin{proof}
Let $U_1=V(rK_3)$, $V_1$ be the set of vertices of degree three in $K_{3,n-3r-3}$ and $V_2=V(G)\setminus (U_1\cup V_1\cup \{u\})$.
With respect to the vertex partition $V(H_{n,r})=\{u\}\cup U_1\cup V_1\cup V_2$, $A(H_{n,r})$ has an equitable quotient matrix
\[
B=\begin{pmatrix}
0&3r&n-3r-3&0\\
1&2&0&0\\
1&0&0&2\\
0&0&n-3r-3&0
\end{pmatrix}.
\]
By Lemma \ref{quo}, $\rho(H_{n,r})$ is the largest root of $f(x)=0$, where
\[
f(x)=\det(xI_4-B)=x^4 - 2x^3 + (6r - 3n + 9)x^2 + (6n - 18r - 18)x + 6nr - 18r - 18r^2.
\]
It is evident that
$f''(x)\ge f''(\sqrt{3n-9})>0$ if $x\ge \sqrt{3n-9}$,
so it may be checked that  $f'(x)\ge f'(\sqrt{3n-9})>0$ if $x\ge \sqrt{3n-9}$.
As $n\ge 3r+4$, we have $4n-12-3r>3\sqrt{3n-9}$, so
\[
f(\sqrt{3(n-3)})=24rn-72r-18r^2-18r\sqrt{3n-9}=6r(4n-12-3r-3\sqrt{3n-9})>0,
\]
which shows that $\rho(H_{n,r})<\rho(K_{3,n-3})$.
\end{proof}

The following result follows from Lemmas \ref{comp2} and \ref{comp3}.

\begin{corollary}\label{comp4}
For  $n\ge 3r+6$ and $r\ge 1$, $\rho(K_{3,n-3})>\rho(H_{n,r}')$.
\end{corollary}

\begin{lemma}\label{cal}
The following results hold.\\
(i) For integer $n\ge 7$ with $n\equiv 1\pmod 3$, $\rho(K_1\vee \tfrac{n-1}{3}K_3)<\sqrt{3(n-3)}$\,;\\
(ii) For integer $n\ge 6$ with $n\equiv 0\pmod 3$, \[
\rho(K_1\vee (K_2\cup \tfrac{n-3}{3}K_3))<
\begin{cases}
\sqrt{3(n-3)} & \mbox{if $n\ge 9$},\\
\frac{1}{2}+\sqrt{2n-\frac{15}{4}} & \mbox{if $n=6$};
\end{cases}
\]
(iii) For integers $n\ge 8$ with $n\equiv 2\pmod 3$,
$\rho(K_1\vee (K_{1}\cup \tfrac{n-2}{3}K_3))<
\sqrt{3(n-3)}$\,.
\end{lemma}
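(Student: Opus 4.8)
All three graphs have the form $K_1\vee H$ with $H$ a disjoint union of triangles and at most one extra small block, so each is several copies of $K_4$ glued at a common apex $u$ together with one short ``blade'' ($K_2$ in (ii), $K_1$ in (iii)). The plan is to use the evident vertex symmetry: partition $V$ into $\{u\}$, the set of all triangle-vertices, and (for (ii), (iii)) the vertices of the extra block. Each such partition is equitable, so by Lemma \ref{quo} the spectral radius is the largest root of the characteristic polynomial of a $2\times2$ (case (i)) or $3\times3$ (cases (ii), (iii)) quotient matrix, which I would then compare with $\sqrt{3(n-3)}=\rho(K_{3,n-3})$ from Lemma \ref{comp}.

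Case (i) is the cleanest. The quotient matrix for $\{u\}\cup V(\tfrac{n-1}{3}K_3)$ is $\left(\begin{smallmatrix}0&n-1\\1&2\end{smallmatrix}\right)$, whose larger eigenvalue is $1+\sqrt n$ in closed form. Hence (i) reduces to $1+\sqrt n<\sqrt{3n-9}$, and squaring (twice) turns this into $n^2-11n+25>0$, valid for all $n\ge8$.

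For (ii) and (iii) the quotient matrices are
\[
\begin{pmatrix}0&2&n-3\\1&1&0\\1&0&2\end{pmatrix}\qquad\text{and}\qquad\begin{pmatrix}0&1&n-2\\1&0&0\\1&0&2\end{pmatrix},
\]
with characteristic polynomials $h(x)=x^3-3x^2-(n-3)x+(n+1)$ and $g(x)=x^3-2x^2-(n-1)x+2$. To show the largest root is below $c:=\sqrt{3n-9}$ I would imitate Lemma \ref{comp3}: using $c^2=3n-9$ and $c^3=(3n-9)c$ gives $h(c)=(2n-6)c-8n+28$ and $g(c)=(2n-8)c-6n+20$; squaring the resulting inequalities reduces positivity to $12n^3-172n^2+772n-1108>0$ (for (ii)) and $12n^3-168n^2+720n-976>0$ (for (iii)), each of which is positive at its least admissible value ($n=9$ and $n=8$) and increasing afterwards. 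Since $h''>0$ and $g''>0$ on $[c,\infty)$ and $h'(c),g'(c)>0$, both polynomials are increasing there, so $h(c)>0$ and $g(c)>0$ force their largest roots below $c$.

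The delicate point is the small boundary values, where $\sqrt{3(n-3)}$ is not the governing bound: by Lemma \ref{comp}, $\rho(K_{3,n-3})>\rho(K_{1,1,n-2})$ only for $n\ge10$, so for $6\le n\le9$ one must compare against $\rho(K_{1,1,n-2})=\tfrac12+\sqrt{2n-\tfrac{15}4}$ instead. This is exactly what (ii) records at $n=6$, where $K_1\vee(K_2\cup K_3)$ has $\rho\approx3.26$, safely below $\rho(K_{1,1,4})\approx3.37$; I would settle it by locating the largest root of $x^3-3x^2-3x+7$ directly. The same issue reappears in (i) at $n=7$: there $1+\sqrt7\approx3.65$ exceeds $\sqrt{12}\approx3.46$ yet stays below $\rho(K_{1,1,5})=\tfrac{1+\sqrt{41}}2\approx3.70$, so the comparison at $n=7$ must be made against $\rho(K_{1,1,5})$ rather than $\sqrt{3(n-3)}$, while for every $n\equiv1\pmod3$ with $n\ge10$ the inequality $1+\sqrt n<\sqrt{3n-9}$ already holds. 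Keeping straight which of the two candidate bounds is in force for each residue class and small $n$ is the main obstacle; once that is sorted, only the routine polynomial estimates above remain.
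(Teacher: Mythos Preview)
Your quotient-matrix approach coincides with the paper's: you obtain the same $2\times2$ and $3\times3$ matrices and the same characteristic polynomials for all three parts, and your strategy of evaluating at $c=\sqrt{3n-9}$ and checking monotonicity on $[c,\infty)$ is exactly what the paper does (indeed your $h(c)=(2n-6)c-8n+28$ is the correct value; the paper's constant term $-2(4n-13)$ is off by $2$).

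The real content of your write-up is the last paragraph, and you are right: part~(i) as stated is \emph{false} at $n=7$, since $\rho(K_1\vee 2K_3)=1+\sqrt7\approx3.646$ while $\sqrt{3(n-3)}=\sqrt{12}\approx3.464$. The paper's proof of~(i) merely quotes the closed form $1+\sqrt n$ and writes ``so (i) follows'' without ever checking the inequality, thereby missing this. Your proposed repair is the correct one for the downstream application: in the proof of Theorem~\ref{double2} one also has $\rho\ge\rho(K_{1,1,n-2})$ for $6\le n\le9$, and at $n=7$ what is actually needed is $1+\sqrt7<\rho(K_{1,1,5})=\tfrac{1+\sqrt{41}}2\approx3.702$, which holds; for every $n\equiv1\pmod3$ with $n\ge10$ your computation $n^2-11n+25>0$ already yields $1+\sqrt n<\sqrt{3n-9}$. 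So you have not proved~(i) as written---because it cannot be proved as written---but you have correctly identified the error and the amendment the lemma needs.
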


\begin{proof}
From \cite[p.~19, Ex. 1.12]{BH}, we know that $\rho(K_1\vee \tfrac{n-1}{3}K_3)=1+\sqrt{n}$, so
 (i) follows.

% Let $G=K_1\vee \tfrac{n-1}{3}K_3$. Let $u$ be the vertex of degree $n-1$ in $G$ and $U_1=V(G)\setminus\{u\}$. With vertex partition $V(G)=\{u\}\cup U_1$, $A(G)$ has an equitable quotient matrix
%\[
%B=\begin{pmatrix}
%0&n-1\\
%1&2
%\end{pmatrix}.
%\]
%By Lemma \ref{quo}, $\rho(G)=\lambda(B)$, which is equal to the largest root $f(x)=x^2-2x-n+1=0$.

Let $H=K_1\vee (K_2\cup \tfrac{n-3}{3}K_3)$.
Let $u$ be the vertex of degree $n-1$ in $H$, $V_1$ be the set of vertices of degree two and $V_2=V(H)\setminus (V_1\cup \{u\})$. With respect to the vertex partition $V(H)=\{u\}\cup V_1\cup V_2$, $A(H)$ has an equitable quotient matrix
\[
B'=\begin{pmatrix}
0&2&n-3\\
1&1&0\\
1&0&2
\end{pmatrix}.
\]
By Lemma \ref{quo}, $\rho(H)$ is equal to the largest root of $g(x)=0$, where
\[
g(x)=\det(xI_3-B')=x^3 - 3x^2 - (n - 3)x + n + 1.
\]
As $g(x)$ is increasing in $[\sqrt{3n-9},\infty)$ and $g(\sqrt{3(n-3)})=2(n-3)\sqrt{3n-9}-2(4n-13)>0$ if $n\ge 9$, we have $\rho(H)<\sqrt{3(n-3)}$ if $n\ge 9$. The result for $n=6$ follows by an easy calculation. This proves (ii).

Let $G=K_1\vee (K_{1}\cup \tfrac{n-2}{3}K_3)$. Let $u$ be the vertex of degree $n-1$ and $v$ be the pendant vertex in $G$. Let $U_1=V(G)\setminus \{u,v\}$. With vertex partition $V(G)=\{u\}\cup \{v\}\cup V_1$, $A(G)$ has an equitable quotient matrix \[
B=\begin{pmatrix}
0&1&n-2\\
1&0&0\\
1&0&2
\end{pmatrix}.
\]
By Lemma \ref{quo}, $\rho(G)$ is equal to the largest root of $f(x)=0$, where
\[
f(x)=\det(xI_3-B)=x^3-2x^2-(n-1)x+2.
\]
As $f(x)$ is increasing in $[\sqrt{3n-9},\infty]$ and $f(\sqrt{3n-9})=(2n-8)\sqrt{3n-9}-2(3n-10)>0$.
%For $n=5$, the result follows from a direct calculation.
This proves (iii).
\end{proof}

\begin{lemma}\label{cal2}
For  $n\ge 5+t$ and $t\ge 2$ with $n-t\equiv 2\pmod 3$, $\rho(K_1\vee (K_{1,t}\cup \tfrac{n-t-2}{3}K_3))<\rho(K_{1,1,n-2})$.
\end{lemma}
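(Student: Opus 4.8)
Write $r=\tfrac{n-t-2}{3}$, so that $G:=K_1\vee(K_{1,t}\cup rK_3)$ has $n$ vertices, and the hypotheses $t\ge 2$ and $n\ge t+5$ force $n\ge 7$ and $r\ge 1$. The plan is to compute $\rho(G)$ through an equitable quotient matrix and then compare it with $\lambda:=\rho(K_{1,1,n-2})=\tfrac12+\sqrt{2n-\tfrac{15}{4}}$, recalling from its definition that $\lambda^2=\lambda+2n-4$.

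First I would partition $V(G)$ into the apex vertex $\{u\}$ (degree $n-1$), the star center $\{c\}$, the set $L$ of the $t$ leaves of the star, and the set $T$ of the $3r$ triangle vertices. Each leaf has exactly one neighbor in $\{u\}$ and one in $\{c\}$, and each triangle vertex has exactly one neighbor in $\{u\}$ and two in $T$, so this partition is equitable with quotient matrix
\[
B=\begin{pmatrix}
0 & 1 & t & n-t-2\\
1 & 0 & t & 0\\
1 & 1 & 0 & 0\\
1 & 0 & 0 & 2
\end{pmatrix}.
\]
By Lemma \ref{quo}, $\rho(G)$ is the largest root of
\[
p(x)=\det(xI_4-B)=x^4-2x^3-(n+t-1)x^2+(2t+2)x+t(n-t+2).
\]

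Since $p(x)\to+\infty$ and $p(x)>0$ beyond its largest root, it suffices to prove that $p$ is increasing on $[\lambda,\infty)$ and that $p(\lambda)>0$; together these give $\rho(G)<\lambda$. For monotonicity I would run the same chain used in Lemmas \ref{comp3} and \ref{cal}: from $p'''(x)=24x-12>0$ it is enough to check the values at $\lambda$. Using $\lambda^2=\lambda+2n-4$ one gets $p''(\lambda)=22n-2t-46>0$ and $p'(\lambda)=(6n-2t-16)\lambda+(2t-4n+10)$, the latter being at least $2n-6>0$ because $\lambda>1$ and $6n-2t-16>0$ throughout the admissible range; hence $p''>0$ and then $p'>0$ on $[\lambda,\infty)$.

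The crux is the evaluation $p(\lambda)>0$, and this two‑variable positivity is where I expect the main difficulty. Reducing $\lambda^3$ and $\lambda^4$ via $\lambda^2=\lambda+2n-4$, the quartic collapses to the linear‑in‑$\lambda$ expression
\[
p(\lambda)=(t-n+2)\lambda+\bigl(2n^2-12n+16-nt+6t-t^2\bigr),
\]
whose $\lambda$‑coefficient is negative. I would handle this by regarding $p(\lambda)$ as a function of $t$ for fixed $n$: its $t$‑derivative equals $\lambda-n+6-2t$, which is negative for all $t\ge 2$ once $\lambda<n-2$ (valid for $n\ge 7$), so $p(\lambda)$ is decreasing in $t$ and is minimized over $2\le t\le n-5$ at the endpoint $t=n-5$. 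There the constant simplifies to $9n-39$ and the coefficient to $-3$, giving $p(\lambda)=-3\lambda+9n-39$; since $3n-\tfrac{27}{2}>\sqrt{2n-\tfrac{15}{4}}$ for $n\ge 7$ (verified by squaring, as the resulting quadratic $9n^2-83n+186$ is positive there), this value is strictly positive. Monotonicity in $t$ then yields $p(\lambda)>0$ for every admissible $t$, and with the monotonicity of $p$ this completes the comparison $\rho(G)<\rho(K_{1,1,n-2})$.
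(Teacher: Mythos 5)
Your proof is correct, and I checked it line by line: the partition $\{u\}\cup\{c\}\cup L\cup T$ is indeed equitable, the characteristic polynomial $p(x)=x^4-2x^3-(n+t-1)x^2+(2t+2)x+t(n-t+2)$ is right, and the reductions via $\lambda^2=\lambda+2n-4$ give exactly $p''(\lambda)=22n-2t-46>0$, $p'(\lambda)=(6n-2t-16)\lambda+(2t-4n+10)\geq 2n-6>0$, and $p(\lambda)=(t-n+2)\lambda+(2n^2-12n+16-nt+6t-t^2)$; your continuous-$t$ minimization is legitimate (the derivative $\lambda-n+6-2t$ is negative for $t\ge 2$ since $\lambda<n-2$ for $n\ge 6$, and all admissible integers lie in $[2,n-5]$), the endpoint value $-3\lambda+9n-39$ is positive because $9n^2-83n+186>0$ for $n\ge 6$, and you correctly couple $p(\lambda)>0$ with monotonicity of $p$ on $[\lambda,\infty)$, without which positivity at a single point would not locate $\lambda$ past the largest root. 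This is, however, a genuinely different route from the paper's: there, one takes the Perron vector $\mathbf{x}$ of $G$, solves the local eigenvalue equations to get $x_1=\tfrac{1}{\rho-2}x_u$ at the triangle vertices and $x_v=\tfrac{\rho+t}{\rho^2-t}x_u$ at the star center, claims $x_1<x_v$, deletes all triangle edges and joins every triangle vertex to $v$ — producing exactly $K_{1,1,n-2}$ — and finishes in one line by Rayleigh's principle. The rewiring buys brevity: two displayed computations instead of your page of calculus on a quartic. What your version buys is uniformity and, as it happens, correctness at the boundary: the paper's pivotal inequality $\tfrac{1}{\rho-2}<\tfrac{\rho+t}{\rho^2-t}$ is equivalent (since $\rho>2$ and $\rho^2>t$) to $t<(t-2)\rho$, which is \emph{false} at $t=2$ — e.g.\ for $n=7$, $t=2$ one has $\rho\approx 3.47$, $x_1\approx 0.68\,x_u>0.54\,x_u\approx x_v$ — so the paper's argument as written only functions for $t\ge 3$ (where $\rho>t/(t-2)$ holds), while your quotient-matrix computation, which I verified numerically against this same case ($\rho(G)\approx 3.47<3.70\approx\rho(K_{1,1,5})$), covers all $2\le t\le n-5$ at once.
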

\begin{proof}
Let $G=K_1\vee (K_{1,t}\cup \tfrac{n-t-2}{3}K_3)$, $\rho=\rho(G)$ and $\mathbf{x}$ be the Perron vector of $G$. Let $u$ and $v$ be the vertices of degree $n-1$ and $t+1$ in $G$, respectively. Let $V_1$ be the set of vertices of degree four in $G$ and $V_2$ be the set of remaining vertices.
By $A(G)\mathbf{x}=\rho\mathbf{x}$, for $i=1,2$, each vertex in $V_i$ has the same entry, denoted by $x_i$.
By $A(G)\mathbf{x}=\rho\mathbf{x}$ at $v$, $w\in V_1$ and $z\in V_2$, we have
\[
x_1=\frac{1}{\rho-2}x_u\mbox{ and }x_v=\frac{\rho+t}{\rho^2-t}x_u.
\]
As $\tfrac{1}{\rho-2}<\tfrac{\rho+t}{\rho^2-t}$, $x_1<x_v$.
Let \[
G'=G-\{wz\in E(G):w,z\in V_1\}+\{vw:w\in V_1\}.
\]
Then $G'\cong K_{1,1,n-2}$.
By Rayleigh's principle,
\[
\rho(G')\ge\mathbf{x}^\top A(G')\mathbf{x}=\mathbf{x}^\top A(G)\mathbf{x}-2\sum_{wz\in E(G[V_1])}x_wx_z+2\sum_{w\in V_1}x_vx_w>\rho,
\]
as desired.
\end{proof}

\begin{proof}[Proof of Theorem \ref{double2}]
It is trivial for $n=3,4$. Suppose in the following that $n\ge 5$.
Suppose that $G$ is a graph that maximizes the spectral radius among all graphs of order $n$ containing no DCC$_1$s. Similarly as in the proof of Theorem \ref{double3}, $G$ is connected.
Let $\rho=\rho(G)$ and let $\mathbf{x}$ be the Perron  vector of $G$ and $u$ a  vertex with maximum entry in $\mathbf{x}$. Let $X=N_G(u)$ and $Y=V(G)\setminus N_G[u]$. From   \eqref{ruu}, we have
\[
\rho^2x_u=d_ux_u+\sum_{v\in X}d_X(v)x_v+\sum_{v\in Y}d_X(v)x_v\le \left(|X|+2e(X)+e(X,Y)\right)x_u,
\]
so
\[
\rho^2\le |X|+2e(X)+e(X,Y).
\]
As $K_{3,n-3}$ has no DCC$_1$s, we have $\rho\ge \rho(K_{3,n-3})=\sqrt{3(n-3)}$, so we have
\begin{equation}\label{eqqq}
3(n-3)\le |X|+2e(X)+e(X,Y).
\end{equation}

By the same argument as in the proof of Claim \ref{3cut}, we have

\begin{claim}\label{cut}
There is no cut vertex in $V(G)\setminus\{u\}$.
\end{claim}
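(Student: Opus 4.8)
The plan is to reuse the switching argument from Claim~\ref{3cut} essentially verbatim, since the only feature of the forbidden subgraph it exploits is $2$-connectivity, which a DCC$_1$ also enjoys. I would argue by contradiction: suppose some $v\in V(G)\setminus\{u\}$ is a cut vertex. Then $G-v$ has at least two components, and since $u\ne v$ at least one component $H$ of $G-v$ avoids $u$. The key operation is to peel $H$ off of $v$ and reattach it to the maximum-entry vertex $u$, forming
\[
G'=G-\{vz:z\in N_H(v)\}+\{uz:z\in N_H(v)\}.
\]
Because $H$ is a component of $G-v$, no vertex of $H$ is adjacent to $u$ in $G$, so $N_H(v)\subseteq N_G(v)\setminus N_G[u]$, and this set is nonempty as $v$ is a cut vertex of the connected graph $G$.

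Next I would record the structure of $G'$. In $G'$ the only edges leaving $V(H)$ run to $u$, so $u$ is a cut vertex of $G'$; moreover $G'[\{u\}\cup V(H)]\cong G[\{v\}\cup V(H)]$ (identifying $u$ with $v$) while $G'[V(G)\setminus V(H)]=G[V(G)\setminus V(H)]$. Thus the two blocks of $G'$ meeting at the cut vertex $u$ are each isomorphic to a subgraph of $G$. The crucial step is then to check that $G'$ remains DCC$_1$-free: a DCC$_1$ is a cycle together with two chords, hence $2$-connected, so it contains no cut vertex. Any DCC$_1$ in $G'$ must therefore lie entirely within $\{u\}\cup V(H)$ or entirely within $V(G)\setminus V(H)$, and since each of these induces a copy of a subgraph of the DCC$_1$-free graph $G$, no DCC$_1$ can occur in $G'$.

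Finally, since $u$ has maximum entry in the Perron vector we have $x_u\ge x_v$, and applying the set form of Lemma~\ref{perron} to the switched set $N_H(v)$ gives $\rho(G')>\rho$, contradicting the maximality of $\rho=\rho(G)$. The only delicate point, as in every step of this flavor, is verifying that the switched graph still avoids the forbidden configuration; here that reduces cleanly to the $2$-connectivity of any doubly chorded cycle, after which the Perron-vector inequality is routine.
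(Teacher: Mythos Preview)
Your proof is correct and is exactly the argument the paper intends: it explicitly states that Claim~\ref{cut} follows ``by the same argument as in the proof of Claim~\ref{3cut},'' and you have reproduced that switching argument with the only necessary adaptation (observing that a DCC$_1$, being a cycle with chords, is $2$-connected, so any copy in $G'$ must live entirely on one side of the cut vertex $u$). If anything, you have supplied more detail than the paper does on why $G'$ remains DCC$_1$-free.
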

%\begin{proof}
%Suppose that there is a cut vertex $v\in V(G)\setminus \{u\}$. Then $G-v$ has at least two components, one of which does not contain $u$, which we denote by $H$. Let
%\[
%G'=G-\{vz:z\in N_H(v)\}+\{uz:z\in N_H(v)\}.
%\]
%Then  $u$ is a cut vertex of $G'$ and $G'[\{u\}\cup V(H)]\cong G[\{v\}\cup V(H)]$, so $G'\in \mathcal{G}_n$. However, we have by Lemma \ref{perron} that $\rho(G')>\rho$, a contradiction.
%\end{proof}

As $G$ does not contain a DCC$_1$, $G[X]$ is $P_4$-free,  any component  $G[X]$ is an isolated vertex $K_1$, an isolated edge $K_2$, a star $K_{1,r}$ for some $r\ge 2$, or a triangle $K_3$. Let $X_0$ be the set of isolated  vertices in $G[X]$, $X_1$ the set of vertices of isolated edges in $G[X]$, $X_2$ the set of vertices of stars  in $G[X]$, and  $X_3$ the set of vertices of triangles in $G[X]$. Then $X=X_0\cup X_1\cup X_2\cup X_3$.

\begin{claim}\label{x3}
For each $v\in X_3$, $N_Y(v)=\emptyset$.
\end{claim}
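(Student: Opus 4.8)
The plan is to argue by contradiction: suppose some $v\in X_3$ has a neighbour $w\in Y$. Since $v$ lies in a triangle of $G[X]$, say on vertices $v,v_1,v_2$, and $u$ is adjacent to each of $v,v_1,v_2$, the set $\{u,v,v_1,v_2\}$ induces a copy of $K_4$. My goal is to exhibit a cycle together with two chords incident to a common vertex, i.e.\ a DCC$_1$, built from these four vertices, the vertex $w$, and a path back to $u$. Because $v\neq u$ and, by Claim~\ref{cut}, $v$ is not a cut vertex, the graph $G-v$ is connected, so there is a path $P$ from $w$ to $u$ that avoids $v$.

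The construction hinges on the first place where $P$ meets the rest of the $K_4$. Let $z$ be the first vertex of $P$, traversed from $w$, that belongs to $\{u,v_1,v_2\}$ (it exists, since $u$ is the endpoint of $P$), and let $Q=P[w,z]$; the interior of $Q$ is then disjoint from $\{u,v,v_1,v_2\}$, and $w\ne z$ since $w\in Y$ while $z\in N_G[u]$. I then split into three cases according to the value of $z$. If $z=u$, consider the cycle $u\,v_1\,v_2\,v\,w$ closed back to $u$ along $Q$. If $z=v_1$, consider the cycle $u\,v_2\,v\,w$ continued along $Q$ to $v_1$ and then back to $u$ via the edge $v_1u$; if $z=v_2$, use the symmetric cycle $u\,v_1\,v\,w$ continued along $Q$ to $v_2$ and the edge $v_2u$. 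In each case the cycle is genuine, since apart from $u,v,v_1,v_2,w$ its only vertices are interior vertices of $Q$, which are distinct from these. In all three cases the cycle-neighbours of $v$ lie in $\{v_1,v_2,w\}$ but do not include $u$, so $uv$ (an edge of $G$ because $v\in X$) is a chord; moreover one of the triangle edges $vv_1,vv_2$ joins $v$ to a cycle vertex without being a cycle edge, hence is a second chord incident to $v$. Thus each case produces a cycle with two chords incident to $v$, a DCC$_1$, contradicting the hypothesis on $G$.

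The step requiring the most care is precisely the possibility that the $w$--$u$ path runs through $v_1$ or $v_2$. A naive cycle that places two of $v,v_1,v_2$ consecutively with $u$ yields only a single chord at $u$, which is not enough. Selecting the first-hit vertex $z$ and routing the cycle so that the two chords sit at $v$ rather than at $u$ resolves all three cases uniformly, with the triangle edge $vv_1$ or $vv_2$ always supplying the second chord. One should also note the degenerate situation in which $Q$ is the single edge $wz$: then the interior of $Q$ is empty and the displayed cycles are $5$-cycles of the form $K_1\vee P_4$, and the argument goes through unchanged.
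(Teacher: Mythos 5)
Your proof is correct and takes essentially the same route as the paper: both invoke Claim~\ref{cut} to get a $w$--$u$ path $P$ avoiding $v$ and then case on whether and where $P$ first meets $\{u,v_1,v_2\}$, producing exactly the cycles $uv_1v_2vwPu$ and $uv_2vwP[w,v_1]v_1u$. Your ``first-hit vertex $z$'' device just packages the paper's case analysis uniformly, and the fact that you place both chords at $v$ while the paper's first case cites the chords $uv$ and $uv_2$ at $u$ is immaterial, since the same cycle carries both pairs.
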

\begin{proof}
Suppose that $N_Y(v)\ne \emptyset$ for some $v\in X_3$. Assume that $w\in N_Y(v)$.
By Claim \ref{cut}, $v$ is not a cut vertex of $G$, so there is a path $P$ connecting $w$ and $u$ which does not contain $v$.  Let  $v_1$ and $v_2$ be two other vertices of the triangle of $G[X]$ containing $v$. If both $v_1$ and $v_2$ lie outside $P$, then $uv_1v_2vwPu$ is a cycle with chords $uv$ and $uv_2$, a contradiction. If one of $v_1$ or $v_2$, say $v_1$, lies on $P$, where we assume that $d_P(v_1,w)<d_P(v_2,w)$ if both $v_1$ and $v_2$ lie on $P$,  then $uv_2vwP[w,v_1]v_1u$ %$uv_1P^{-1}[v_1, w]wvv_2u$
is a cycle with chords $vu$ and $vv_1$, also a contradiction.
\end{proof}

Let $X_2'$ be the set of the centers of the stars in $G[X_2]$.
Let $X_2^*$ be the set of pendant vertices of the components $K_{1,2}$ of $G[X_2]$

\begin{claim}\label{x2}
For each $v\in X_2^*$, if $vw\in Y$ for some $w\in Y$, then $N_X(w)\subseteq \{v,v'\}$, where $v'$ is the other pendant vertex  in the component of $G[X_2]$ containing $v$.
\end{claim}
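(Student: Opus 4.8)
The plan is to argue by contradiction: assuming some neighbour $v_1$ of $w$ in $X$ lies outside $\{v,v'\}$, I would exhibit a DCC$_1$ in $G$, contradicting the hypothesis. The guiding idea is that $u$ is adjacent to \emph{every} vertex of $X$, so $u$ is the natural candidate for the vertex at which two chords meet. Write the $K_{1,2}$ component of $G[X_2]$ containing $v$ as $\{v,v',c\}$, where $c$ is its centre (adjacent to both leaves $v$ and $v'$) and $v'$ is the other leaf; thus $c\in X_2'$ and $v,v'\in X_2^*$. Recall also the standing hypothesis $w\in N_Y(v)$, so $w\in Y$ and $w\sim v$.

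First I would dispose of the generic case $v_1\in X\setminus\{v,v',c\}$. Here $u\sim v_1$, $w\sim v_1$, $w\sim v$, $v\sim c$, $c\sim v'$ and $v'\sim u$, and the six vertices $u,v_1,w,v,c,v'$ are pairwise distinct (the three vertices of the star are distinct, $w\in Y$ is distinct from the $X$-vertices, and $v_1\notin\{v,v',c\}$). Hence $u\,v_1\,w\,v\,c\,v'\,u$ is a $6$-cycle on which $u$ is adjacent only to $v_1$ and $v'$; since $u\sim v$ and $u\sim c$, the edges $uv$ and $uc$ are both chords and both incident to $u$, giving a DCC$_1$.

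It remains to treat the degenerate case $v_1=c$, that is $w\sim c$; this must be handled separately precisely because the $6$-cycle above uses $c$ as a distinct vertex. In this case I would instead use the $5$-cycle $u\,v'\,c\,w\,v\,u$, which is valid since $v'\sim c$, $c\sim w$, $w\sim v$ and $u\sim v',v$. On this cycle $c$ is adjacent only to $v'$ and $w$, so $uc$ and $vc$ are chords both incident to $c$, again yielding a DCC$_1$. Combining the two cases, no neighbour of $w$ in $X$ can lie outside $\{v,v'\}$, which is exactly the assertion $N_X(w)\subseteq\{v,v'\}$.

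The only delicate points, which I would verify explicitly, are that in each construction the claimed chords are genuine chords rather than cycle edges (this is why I record the cycle-neighbours of the chord-hub) and that all listed vertices are distinct. Beyond this bookkeeping I do not expect any real obstacle, since the argument is a direct analogue of Claims \ref{3x1} and \ref{x3}.
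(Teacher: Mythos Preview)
Your proposal is correct and essentially identical to the paper's own proof: the paper also splits into the case $v_1=c$ (the centre, which it calls $v_0$) versus $v_1\in X\setminus\{v,v',c\}$, and uses the same $5$-cycle $uv'cwvu$ with chords $uc,vc$ in the first case and the same $6$-cycle $uv'cvwv_1u$ with chords $uv,uc$ in the second. The only differences are notation and the order in which the two cases are treated.
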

\begin{proof}
Suppose  that $wv_1\in E(G)$ for some $v_1\in X\setminus \{v,v'\}$.
If $vv_1\in E(G)$, then $v'v_1\in E(G)$, so $uv'v_1wvu$ is a cycle with chords $uv_1$ and $vv_1$, a contradiction. So $vv_1\notin E(G)$.
Let $v_0$ be the unique neighbor of $v$ in $G[X]$. Then  $uv'v_0vwv_1u$ is a cycle with chords $uv_0$ and $uv$, also a contradiction.
\end{proof}

\begin{claim}\label{x21}
For each $v\in X_2\setminus (X_2'\cup X_2^*)$, $N_Y(v)=\emptyset$.
\end{claim}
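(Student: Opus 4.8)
The statement to prove is Claim~\ref{x21}: for each $v\in X_2\setminus(X_2'\cup X_2^*)$, $N_Y(v)=\emptyset$. Let me understand the players. The set $X_2$ consists of vertices lying in star components $K_{1,r}$ of $G[X]$. Its subset $X_2'$ is the centers of those stars, and $X_2^*$ is the pendant vertices of the \emph{specific} $K_{1,2}$ components (stars with exactly two leaves). So $X_2\setminus(X_2'\cup X_2^*)$ is precisely the set of leaves belonging to stars $K_{1,r}$ with $r\ge 3$. So I am claiming: a leaf $v$ of a star with at least three leaves has no neighbor in $Y$.

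Let me sketch the proof.
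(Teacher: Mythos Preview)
Your setup is correct: the vertices in $X_2\setminus(X_2'\cup X_2^*)$ are exactly the leaves of star components $K_{1,r}$ of $G[X]$ with $r\ge 3$. However, what follows is not a proof --- you announce ``Let me sketch the proof'' and then stop. There is no argument at all, so the claim remains unproved.

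What is missing is the entire contradiction step. Suppose $v\in X_2\setminus(X_2'\cup X_2^*)$ has a neighbor $w\in Y$. Let $v_0$ be the center of the star containing $v$, and let $v_1,\dots,v_{t-1}$ (with $t\ge 3$) be the other leaves. By Claim~\ref{cut}, $v$ is not a cut vertex, so there is a $w$--$u$ path $P$ avoiding $v$. You then need a short case analysis on which of $v_0,v_1,\dots,v_{t-1}$ lie on $P$ (and in what order relative to $w$) to exhibit a cycle with two chords through a single vertex, i.e.\ a DCC$_1$, contradicting the hypothesis on $G$. For instance, if none of $v_0,\dots,v_{t-1}$ lies on $P$, then $uv_1v_0vwPu$ is a cycle with chords $uv_0$ and $uv$ meeting at $u$. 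The remaining cases (depending on whether $v_0$ or some leaf appears first on $P$ from $w$) are handled similarly, and it is here that $t\ge 3$ is used --- you need at least two leaves distinct from $v$ available off the path to build the required cycle. None of this is present in your proposal.
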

\begin{proof}
Suppose that  $N_Y(v)\ne \emptyset$ for some  $v\in X_2\setminus (X_2'\cup X_2^*)$, say
that  $w\in N_Y(v)$. By Claim \ref{cut}, $v$ is not a cut vertex of $G$, so there is a path $P$ from $w$ to $u$ which does not contain $v$.
Let $v_0$ be the neighbor of $v$ in $X$ and  $v_1, \dots, v_t$ all the neighbors of $v_0$ in $X$ with $v=v_t$. If all $v_0,v_1, \dots, v_{t-1}$ lie outside $P$, then $uv_1v_0vwPu$ is a cycle with chords $uv_0$ and $uv$, a contradiction.
Suppose that one of $v_1,\dots, v_{t-1}$, say $v_1$,  lies on $P$.
If
$v_0$ lies on $P$ and  $d_P(w,v_0)<d_P(w,v_1)$, then $uvwP[w,v_0]v_0v_1u$ is a cycle with chords $uv_0$ and $vv_0$, a contradiction. Otherwise, we have either
$v_0$ lies outside $P$, or $v_0$ lies on $P$ and  $d_P(w,v_0)>d_P(w,v_1)$. As $t\ge 3$, $uv_2v_0vwP[w,v_1]v_1u$ is a cycle with chords
 $v_0u$ and $v_0v_1$, also a contradiction.
\end{proof}

By Claim \ref{x2}, for any $v\in X_2^*$ and any $w\in Y$, if $vw\in E(G)$, then $d_X(w)\le 2$. Let  $Y_2$ be the set of those vertices in $Y$ with  at least one neighbor in $X_2^*$. Then
$e(X_2^*,Y_2)\le  2|Y_2|$. By Claim \ref{x21}, $e(X_2\setminus (X_2'\cup X_2^*), Y)=0$, so
\begin{equation}\label{ex2y}
e(X_2\setminus X_2',Y)=e(X_2^*,Y_2)\le  2|Y_2|.
\end{equation}

By a similar argument as in the proof of Claim \ref{3x20}, we have

\begin{claim}\label{x20}
For each $v\in X_2'$, if $vw\in Y$ for some $w\in Y$, then $N_X(w)\setminus\{v\}\subseteq X_0$.
\end{claim}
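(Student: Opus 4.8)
The plan is to argue by contradiction and to reuse, essentially verbatim, the six-cycle construction from the proof of Claim~\ref{3x20}, taking advantage of the fact that a doubly chorded cycle whose two chords share a common endpoint $u$ is precisely a DCC$_1$. So I would suppose that some $v_0\in N_X(w)\setminus\{v\}$ lies outside $X_0$, that is, $v_0\in X_1\cup X_2\cup X_3$, and aim to exhibit a DCC$_1$. Since $v\in X_2'$ is the center of a star in $G[X]$, it has at least two neighbours there, so I may fix $v'\in N_X(v)\setminus\{v_0\}$; note that any neighbour of $v$ inside $X$ is a leaf of that star and hence has $v$ as its unique $X$-neighbour.

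The cycle I would build is $u\,v'\,v\,w\,v_0\,v_0'\,u$, where $v_0'$ denotes any neighbour of $v_0$ in $X$. All six edges are present: $uv',uv_0'\in E(G)$ since $v',v_0'\in X=N_G(u)$; $v'v,v_0v_0'\in E(G)$ by the choice of $v'$ and $v_0'$; and $vw,wv_0\in E(G)$ by hypothesis and by $v_0\in N_X(w)$. The two edges $uv$ and $uv_0$ are then chords, and both are incident to $u$, so the configuration is a DCC$_1$, contradicting the assumption on $G$. It is exactly this forced coincidence of the two chords at $u$ that makes the statement the DCC$_1$ analogue of Claim~\ref{3x20} rather than merely of Theorem~\ref{double3}.

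The hard part will be to guarantee that the six listed vertices are genuinely distinct, which reduces to showing that $v_0$ has a neighbour $v_0'$ in $X$ with $v_0'\neq v$. This can only fail if $vv_0\in E(G)$ with $v_0$ a leaf of $v$'s own star, since then the unique $X$-neighbour of $v_0$ would be $v$ itself and the cycle would collapse. Ruling this out is where the present (DCC$_1$) setting differs from Theorem~\ref{double3} and where Claims~\ref{x2} and~\ref{x21} carry the load: if $vv_0\in E(G)$ then $v$ and $v_0$ lie in the same star component, so $v_0$ is a leaf and $\{v,v_0\}\subseteq N_X(w)$. If that star is a $K_{1,2}$, so $v_0\in X_2^*$, then Claim~\ref{x2} forces $N_X(w)\subseteq\{v_0,v_0''\}$ for the companion leaf $v_0''$, contradicting $v\in N_X(w)$; if the star is larger, so $v_0\in X_2\setminus(X_2'\cup X_2^*)$, then Claim~\ref{x21} gives $N_Y(v_0)=\emptyset$, contradicting $w\in N_Y(v_0)$. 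Hence $vv_0\notin E(G)$; as $v_0\in X_1\cup X_2\cup X_3$ is not isolated in $G[X]$ it has an $X$-neighbour $v_0'\neq v$, and $v_0'\neq v'$ because $v'$ is adjacent within $X$ only to $v$. With distinctness in hand the cycle above yields the contradiction. Triangle components cause no difficulty here, since a vertex of $X_3$ lies in a component disjoint from $v$'s star and so can never be adjacent to $v$.
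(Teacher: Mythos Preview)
Your argument is correct and is essentially the approach the paper intends: the paper simply writes ``By a similar argument as in the proof of Claim~\ref{3x20}'', and your proof is precisely that argument transported to the DCC$_1$ setting, with Claims~\ref{x2} and~\ref{x21} playing the role that Claim~\ref{3x2} played there to secure $vv_0\notin E(G)$, and with the observation that both chords $uv$, $uv_0$ meet at $u$ so the resulting doubly chorded cycle is in fact a DCC$_1$. Your explicit handling of the $X_3$ case and of the distinctness of $v_0'$ from $v$ and $v'$ merely fills in details the paper leaves implicit.
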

%\begin{proof}
%Suppose that $wv_1\in E(G)$ for some $v_1\in X\setminus (X_0\cup \{v\})$. By Claims \ref{x3}--\ref{x21}, $v_1\in X_1\cup X_2'$. Let $v'$ be a neighbor of $v$ in $X$, and $v_1'$ a neighbor of $v_1$ in $X$. Then $uv'vwv_1v_1'u$ is a cycle with chords $uv$ and $uv_1$, a contradiction.
%\end{proof}

Denote by $Y_2'\subseteq Y$ the set of vertices with at least one neighbor  in $X_2'$. Then by Claim \ref{x20},
\begin{equation}\label{ex20y}
e(X_2',Y)=e(X_2',Y_2')=|Y_2'|.
\end{equation}

\begin{claim}\label{x1}
For each $vv'\in E(G[X_1])$, if $vw\in Y$ for some $w\in Y$, then $N_X(w)\setminus \{v,v'\}\subseteq X_0$.
\end{claim}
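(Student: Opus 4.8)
The plan is to argue by contradiction in the same spirit as Claim \ref{3x1}, but now exploiting that under the present hypothesis it suffices to exhibit a single cycle whose two chords share the vertex $u$. Fix an isolated edge $vv'\in E(G[X_1])$ and a vertex $w\in Y$ with $vw\in E(G)$, and suppose for contradiction that some $v_1\in N_X(w)$ lies outside $X_0\cup\{v,v'\}$. Since $v_1$ is not an isolated vertex of $G[X]$, it has a neighbour $v_1'$ in $X$, and I would use this $v_1'$ to close up a cycle through $u$.

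First I would verify that the six vertices $u,v',v,w,v_1,v_1'$ are pairwise distinct. Most of these are immediate because $v,v',v_1,v_1'\in X$ while $w\in Y$, and $v_1\ne v,v'$ by assumption. The only point needing care is that $v_1'\ne v$ and $v_1'\ne v'$: as $v$ and $v'$ each have degree one in $G[X]$ (they form an isolated edge), a common $X$-neighbour of $v_1$ with either of them would force $v_1\in\{v,v'\}$, contrary to assumption. With distinctness in hand, the edges $uv'$, $v'v$, $vw$, $wv_1$, $v_1v_1'$, $v_1'u$ all belong to $G$, so $C=uv'vwv_1v_1'u$ is a $6$-cycle. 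The key observation is then that both $uv$ and $uv_1$ are chords of $C$: the vertices $v$ and $v_1$ lie on $C$, the edges $uv,uv_1\in E(G)$ since $v,v_1\in X$, and neither is an edge of $C$ (the cycle edges incident to $u$ are $uv'$ and $uv_1'$). As $uv$ and $uv_1$ are two chords both incident to $u$, the cycle $C$ is a DCC$_1$, contradicting the hypothesis on $G$; hence $N_X(w)\setminus\{v,v'\}\subseteq X_0$.

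I expect no genuine obstacle here; the only thing requiring attention is the distinctness bookkeeping and confirming that the two chords truly meet at $u$ rather than being cycle edges. It is worth noting that, unlike Claim \ref{3x1}, no second case is needed: there one also had to rule out the configuration $uvv'wv_0u$ with $v_0\in X_0$, but its chords $uv'$ and $vw$ do not share a vertex, so it is a DCC that is not a DCC$_1$ and is therefore permitted under the present weaker hypothesis. This is exactly why the conclusion here is the cleaner inclusion $N_X(w)\setminus\{v,v'\}\subseteq X_0$ rather than the dichotomy appearing in Claim \ref{3x1}.
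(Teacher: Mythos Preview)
Your proposal is correct and follows essentially the same argument as the paper: assume $v_1\in N_X(w)\setminus(X_0\cup\{v,v'\})$, pick a neighbour $v_1'$ of $v_1$ in $X$, and exhibit the $6$-cycle $uv'vwv_1v_1'u$ with chords $uv$ and $uv_1$ both incident to $u$. Your added verification of distinctness and your remark contrasting the conclusion with that of Claim~\ref{3x1} are accurate and helpful, but the underlying idea is identical to the paper's one-line proof.
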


\begin{proof}
Suppose that $wv_1\in E(G)$ for some $v_1\in X\setminus (X_0\cup  \{v,v'\})$.  Denote by $v_1'$ a neighbor of $v_1$ in $X$. Then $uv'vwv_1v_1'u$ is a cycle with chords $uv$ and $uv_1$, a  contradiction.
\end{proof}

Let $Y_1\subseteq Y$ be the set of those vertices with at least one neighbors in $X_1$.
By Claims \ref{x2}, \ref{x20} and \ref{x1}, $S\cap T=\emptyset$ for each pair of $\{S,T\}\subset \{Y_1,Y_2,Y_2'\}$.

For $v\in X_1\cup X_2'$, let $X_v=\{z\in X_0: N_Y(z)\cap N_Y(v)\ne \emptyset\}$.

By the same argument as in the proof of Claim \ref{3x12}, we have

\begin{claim}\label{x12}
For each $v_0\in X_v$, if $|N_Y(v_0)|\ge 2$, then $N_X(z)\subseteq \{v,v_0\}$ for each $z\in N_Y(v_0)\setminus\{w\}$, where $w$ is a common neighbor of $v$ and $v_0$. Moreover, if $|N_Y(v)|\ge 2$, then $|N_Y(v)\cap N_Y(v_0)|=1$ or $N_X(z)\subseteq \{v,v_0\}$ for each $z\in N_Y(v_0)$.
\end{claim}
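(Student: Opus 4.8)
The plan is to reproduce, in the DCC$_1$-free setting, the seven-cycle construction used for Claim \ref{3x12}, checking that the two chords it produces are both incident to $u$ (so that the configuration is genuinely a DCC$_1$, not merely a DCC). Recall that $v_0\in X_v$ supplies a common neighbour $w\in N_Y(v)\cap N_Y(v_0)$, and that $v\in X_1\cup X_2'$ guarantees a neighbour $v'$ of $v$ in $X$. For the first assertion I would assume $|N_Y(v_0)|\ge 2$, fix some $z\in N_Y(v_0)\setminus\{w\}$ (which exists by that hypothesis), and suppose towards a contradiction that there is $v_1\in N_X(z)\setminus\{v,v_0\}$. First I would write down the cycle $uv'vwv_0zv_1u$: every consecutive pair is an edge ($uv',v'v$ from the component of $v$ in $G[X]$; $vw,wv_0,v_0z,zv_1$ from the chosen neighbours; $v_1u$ since $v_1\in X$), while $uv$ and $uv_0$ are edges joining non-consecutive vertices of the cycle, hence chords, and both are incident to $u$. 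This exhibits a DCC$_1$, contradicting the hypothesis on $G$; therefore $N_X(z)\subseteq\{v,v_0\}$.

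For the \emph{moreover} clause I would argue by a double application of the first part. If $|N_Y(v)\cap N_Y(v_0)|=1$ the first alternative already holds, so I may assume this intersection has size at least $2$ and pick distinct $w,w'\in N_Y(v)\cap N_Y(v_0)$; both lie in $N_Y(v_0)$, so $|N_Y(v_0)|\ge 2$ and the first part applies with either one as the distinguished common neighbour. Using $w$ gives $N_X(z)\subseteq\{v,v_0\}$ for every $z\in N_Y(v_0)\setminus\{w\}$, and using $w'$ covers the single vertex $z=w$; together these yield $N_X(z)\subseteq\{v,v_0\}$ for all $z\in N_Y(v_0)$, which is the second alternative.

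The step I expect to be the main obstacle is verifying that the seven listed vertices of $uv'vwv_0zv_1u$ are genuinely distinct. Most coincidences are immediate: $u$ is not in $X\cup Y$; $w,z\in Y$ are distinct from the $X$-vertices $v',v,v_0,v_1$ and from each other; $v\ne v_0$ and $v\ne v'$; $v_1\notin\{v,v_0\}$ by assumption; and $v_0\ne v'$ since $v_0\in X_0$ is isolated in $G[X]$ whereas $v'$ has a neighbour there. The only coincidence not excluded outright is $v'=v_1$.

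When $v\in X_2'$ this is harmless: $v$ has at least two neighbours in $X$, so I would simply choose $v'\ne v_1$ at the outset. The genuine difficulty is $v\in X_1$, where $v'$ is forced to be the unique partner of $v$ on its isolated edge; if $v_1=v'$ the cycle degenerates into the six-cycle $uvwv_0zv'u$, whose two guaranteed chords $uv_0$ and $vv'$ are not incident to a common vertex---a DCC but not a DCC$_1$. Handling this case seems to require a separate argument, and this is where I expect the real work: I would split according to whether $w$ is adjacent to $v'$ or $z$ is adjacent to $v$, each of which produces a six-cycle carrying two chords at $v'$ (resp.\ at $v$) and hence a DCC$_1$; in the remaining sub-case Claim \ref{x1} forces $N_X(w)\subseteq\{v\}\cup X_0$ and $N_X(z)\subseteq\{v'\}\cup X_0$, and I would appeal to the absence of cut vertices (Claim \ref{cut}) to extend along an avoiding path into a longer cycle carrying two chords at a single vertex, in the spirit of the path arguments in Claims \ref{x3} and \ref{x21}.
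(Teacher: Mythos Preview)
Your seven-cycle construction is exactly the paper's argument (the paper merely says ``by the same argument as in the proof of Claim~\ref{3x12}''), and you are right that both chords $uv$ and $uv_0$ meet at $u$, so when the seven vertices are distinct the cycle is a genuine DCC$_1$. You are also right to worry about the collapse $v_1=v'$ when $v\in X_1$: the paper simply ignores this, and it is a real issue in the DCC$_1$ setting.

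However, your proposed repair does not close the gap. Your two easy sub-cases ($wv'\in E(G)$ or $vz\in E(G)$) are fine, but in the remaining sub-case the six-vertex configuration on $\{u,v,v',v_0,w,z\}$ with edge set $\{uv,uv',uv_0,vv',vw,v_0w,v'z,v_0z\}$ has every vertex of degree at most $3$; hence no cycle on these six vertices can carry two chords at a single vertex, and this subgraph is itself DCC$_1$-free. So no amount of rearranging within these vertices will produce a contradiction, and your ``extend along an avoiding path via Claim~\ref{cut}'' sketch would have to import outside vertices in some unspecified way. In fact this example shows the claim, read literally, is a bit stronger than what the cycle argument proves.

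What the argument \emph{does} establish is $N_X(z)\subseteq\{v,v',v_0\}$ when $v\in X_1$ (since $v_1\notin\{v,v',v_0\}$ guarantees seven distinct vertices and hence a DCC$_1$), and $N_X(z)\subseteq\{v,v_0\}$ when $v\in X_2'$ (since then $v'$ can be chosen different from $v_1$). This weaker version is all that the subsequent applications need: for the disjointness of $X_v$ and $X_{v''}$ with $vv''\notin E(G)$ one has $v''\neq v'$, and Claim~\ref{eqx1} treats the edge $vv'$ jointly anyway. So rather than pursuing a path-extension, the clean fix is to state and prove the slightly weaker conclusion allowing $v'$ as an extra exceptional value in the $X_1$ case.
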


By Claim \ref{x12}, $X_v\cap X_{v'}=\emptyset$ for any pair $\{v,v'\}\subseteq X_1\cup X_2'$ with $vv'\not\in E(G)$ if $v,v'\in X_1$. Then for each $v\in X_2'$, by Claim \ref{x20}, $e(X_v,Y_1\cup Y_2')=e(X_v,N_Y(v))$ and so
by the same argument as in the previous section,
\begin{equation}\label{eqn}
e(X_v,Y_1\cup Y_2')\le |X_v|+|N_Y(v)|.
\end{equation}

\begin{claim}\label{x11}
If $vv'\in E(X_1)$, then $X_v\subseteq X_{v'}$ or $X_{v'}\subseteq X_v$.
\end{claim}
\begin{proof}
Suppose to the contrary that $v_0\in X_v\setminus X_v'$ and $v_1\in X_v'\setminus X_v$. By the definition of $X_v$, there exist vertices $w$ and $w'$ such that $vw,v_0w,v'w',v_1w'\in E(G)$. Then $uv_0wvv'w'v_1u$ is a cycle with chords $uv$ and $uv'$, a contradiction. So $X_v\setminus X_v'=\emptyset$ or $X_v'\setminus X_v=\emptyset$.
\end{proof}

For an edge $vv'\in G[X_1]$, assume that $X_{v'}\subseteq X_v$ by Claim \ref{x11}. For $vv'\in E(X_1)$, let $Y_{vv'}=\{w\in Y_1:vw,v'w\in E(G)\}$.
\begin{claim}\label{eqx1}
 For $vv'\in G[X_1]$,
$e(\{v,v'\}\cup X_v\cup X_{v'},Y_1)\le 2|N_Y(v)\cup N_Y(v')|+|X_v|$.
\end{claim}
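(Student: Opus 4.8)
The plan is to split the left-hand side as $e(\{v,v'\},Y_1)+e(X_v,Y_1)$ (recall that $X_{v'}\subseteq X_v$ by Claim \ref{x11}, so $X_v\cup X_{v'}=X_v$) and bound the two pieces separately before recombining. Since $v,v'\in X_1$, every neighbour of $v$ or of $v'$ in $Y$ automatically lies in $Y_1$, so the first piece is exactly $e(\{v,v'\},Y_1)=|N_Y(v)|+|N_Y(v')|$, where each common neighbour is counted twice. The substance of the claim is the bound on the second piece.

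First I would pin down where the edges from $X_v$ to $Y_1$ can land. Using Claim \ref{x12} together with the definition of $Y_1$ (a vertex of $Y_1$ has a neighbour in $X_1$, and the only candidate in $\{v,v_0\}$ is $v$), I would show that every $Y_1$-neighbour of any $v_0\in X_v$ already lies in $N_Y(v)$, so $e(X_v,Y_1)=e(X_v,N_Y(v))$. Refining this with Claim \ref{x11}: a vertex $v_0\in X_v\setminus X_{v'}$ has no common neighbour with $v'$, so its $Y_1$-neighbours lie in $N_Y(v)\setminus N_Y(v')$; while a vertex $v_0\in X_{v'}$, viewed from both $v$ and $v'$, has all its $Y_1$-neighbours in $N_Y(v)\cap N_Y(v')=Y_{vv'}$. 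This yields the partition $e(X_v,Y_1)=e(X_v\setminus X_{v'},N_Y(v)\setminus N_Y(v'))+e(X_{v'},Y_{vv'})$.

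For the first summand I would reuse the counting device from the previous section: split $X_v\setminus X_{v'}$ according to whether a vertex meets $N_Y(v)$ in exactly one vertex (contributing one edge each) or in at least two (in which case Claim \ref{x12} forces it to own a set of $Y$-neighbours disjoint from those of the others), giving $e(X_v\setminus X_{v'},N_Y(v)\setminus N_Y(v'))\le |X_v\setminus X_{v'}|+|N_Y(v)\setminus N_Y(v')|$. The decisive step, which I expect to be the main obstacle, is the bound $e(X_{v'},Y_{vv'})\le |X_{v'}|$. Here I would invoke the dichotomy of Claim \ref{x12}: for $v_0\in X_{v'}$, either $v_0$ has exactly one common neighbour with $v$, giving at most one edge into $Y_{vv'}$, or $N_X(z)\subseteq\{v,v_0\}$ for every $z\in N_Y(v_0)$ — but a vertex of $Y_{vv'}$ is adjacent to $v'\notin\{v,v_0\}$, so in the latter case $v_0$ has no neighbour in $Y_{vv'}$ at all. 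Either way each $v_0\in X_{v'}$ contributes at most one such edge, which is precisely what forces the $|Y_{vv'}|$ contributions to cancel rather than accumulate.

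Finally I would add the three bounds and simplify. Writing $N=N_Y(v)\cup N_Y(v')$ and using the identity $|N_Y(v)|+|N_Y(v')|+|N_Y(v)\setminus N_Y(v')|=2|N|-|N_Y(v')\setminus N_Y(v)|$, the total collapses to $2|N|+|X_v|-|N_Y(v')\setminus N_Y(v)|\le 2|N|+|X_v|$, as required. I would handle the degenerate cases $|N_Y(v)|\le 1$ at the outset, since there $X_v$ is empty or each of its vertices meets $N_Y(v)$ in a single vertex, making the "owns a disjoint set" branch of Claim \ref{x12} vacuous and the three bounds immediate.
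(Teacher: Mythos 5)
Your proof is correct, but it reaches Claim \ref{eqx1} by a genuinely different mechanism than the paper. The paper's proof turns on one new forbidden-configuration argument: fixing $v_0\in X_v$ and a common neighbour $w$ of $v$ and $v_0$, a second vertex $z\in Y_{vv'}$ would give the cycle $uv_0wvzv'u$ with chords $uv$ and $vv'$ both incident to $v$, so $Y_{vv'}\subseteq\{w\}$; the proof then splits into the cases $Y_{vv'}=\{w\}$ and $Y_{vv'}=\emptyset$, computing $e(\{v,v'\},Y_1)$ exactly in each and quoting \eqref{eqn} in the second. You build no new cycle at all: you push Claims \ref{x11} and \ref{x12} harder, localizing the edges out of $X_v$ (those from $X_v\setminus X_{v'}$ land in $N_Y(v)\setminus N_Y(v')$, since membership in $Y_1$ forces adjacency to $v$ and $v_0\notin X_{v'}$ forbids adjacency to $v'$; those from $X_{v'}$ land in $Y_{vv'}$ at the rate of at most one per vertex), and then you absorb the excess $|Y_{vv'}|=|N_Y(v)\cap N_Y(v')|$ in $e(\{v,v'\},Y_1)=|N_Y(v)|+|N_Y(v')|$ through the identity $|Y_{vv'}|+|N_Y(v)\setminus N_Y(v')|=|N_Y(v)|\le |N_Y(v)\cup N_Y(v')|$, so the bound closes without ever establishing $|Y_{vv'}|\le 1$. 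I checked the two delicate points: (i) every $Y_1$-neighbour of $v_0\in X_{v'}$ does lie in $Y_{vv'}$ --- if $|N_Y(v_0)|\ge 2$ and the common neighbours $w$ (with $v$) and $w'$ (with $v'$) were distinct, Claim \ref{x12} applied at $v$ would give $N_X(w')\subseteq\{v,v_0\}$, contradicting $w'v'\in E(G)$; hence $w=w'$, and any $z\in N_Y(v_0)\setminus\{w\}$ has $N_X(z)\subseteq\{v,v_0\}\cap\{v',v_0\}=\{v_0\}$, so $z\notin Y_1$ --- and (ii) your appeal to the ``moreover'' dichotomy of Claim \ref{x12} indeed needs $|N_Y(v)|\ge 2$, which you correctly quarantine in the degenerate cases at the outset. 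As for what each approach buys: the paper's cycle construction delivers the structural dichotomy ``$Y'=\{w\}$ or $Y'=\emptyset$,'' which is reused verbatim in Case 3.2.1 of the proof of Theorem \ref{double2}, so your argument, while sufficient for Claim \ref{eqx1} itself, would not substitute for that later step; on the other hand, your counting route sidesteps the paper's terse (and as written under-justified) assertion in the case $Y_{vv'}=\{w\}$ that every $z\in X_v$ is adjacent to exactly one vertex of $Y$, and it even yields the marginally stronger bound $2|N_Y(v)\cup N_Y(v')|+|X_v|-|N_Y(v')\setminus N_Y(v)|$.
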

\begin{proof}
If $X_v=\emptyset$, then the result follows by Claim \ref{x1}. Suppose next that $X_v\ne\emptyset$, say $v_0\in X_v$. Let $w\in N_Y(v_0)$. Assume that $vw\in E(G)$.

We first claim that $Y_{vv'}\setminus\{w\}=\emptyset$. Otherwise, there is some vertex $z\in Y_{vv'}$ with $z\ne w$. Then $uv_0wvzv'u$ is a cycle with chords $uv$ and $vv'$, a contradiction. So either $Y_{vv'}=\{w\}$ or $Y_{vv'}=\emptyset$.

If $Y_{vv'}=\{w\}$, then $e(\{v,v'\},Y_1)=|N_Y(v)\cup N_Y(v')|+1\le 2|N_Y(v)\cup N_Y(v')|$ and for each $z\in X_v$, $z$ is adjacent to exactly one vertex $w$ in $Y$, so $e(X_v,Y_1)=|X_v|$. Therefore, $e(\{v,v'\}\cup X_v\cup X_{v'},Y_1)\le 2|N_Y(v)\cup N_Y(v')|+|X_v|$.

If $Y_{vv'}=\emptyset$, then $e(\{v,v'\},Y_1)=|N_Y(v)\cup N_Y(v')|$. By  Eq.~\eqref{eqn}, $e(X_v,Y_1)\le |X_v|+|N_Y\cup N_Y(v')|$. Therefore, $e(\{v,v'\}\cup X_v\cup X_{v'},Y_1)\le 2|N_Y(v)\cup N_Y(v')|+|X_v|$.
\end{proof}

Let $X_0'\subseteq X_0$ be the set of those vertices with at least one neighbor in $Y_1\cup Y_2'$. Then $X_0'=\cup_{v\in X_1\cup X_2'}X_v$.
For $v\in X_1\cup X_2'$, let $Y_v\subseteq Y\setminus (Y_1\cup Y_2\cup Y_2')$ be the set of those vertices with at least one neighbor  in $X_v$. Let $Y_0'=\cup_{v\in X_1\cup X_2'}Y_v$. By Claim \ref{x12},
\[
e(X_v,Y_0')=e(X_v,Y_v)=|Y_v|.
\]
Now, from \eqref{eqn},
\[
e(X_v, Y)=e(X_v, Y_1\cup Y_2'\cup Y_0')=e(X_v, Y_1\cup Y_2')+e(X_v, Y_0')=|X_v|+|N_Y(v)|+ |Y_v|.
\]
By Claim \ref{eqx1} together with  \eqref{ex20y},
\begin{align*}
&\quad e(X_0'\cup X_1\cup X_2',Y)\\
&=\sum_{vv'\in E(X_1)}\left(e(\{v,v'\}\cup X_v\cup X_{v'},Y_1)+e(X_v\cup X_{v'},Y_0')\right)+\sum_{v\in X_2'}\left(e(v,Y)+e(X_v,Y)\right)\\
&=\sum_{vv'\in E(X_1)}\left(e(\{v,v'\}\cup X_v\cup X_{v'},Y_1)+e(X_v,Y_0')\right)+
+e(X_2',Y)+
\sum_{v\in X_2'}e(X_v,Y)\\
&\le \sum_{vv'\in E(X_1)} (2|N_Y(v)\cup N_Y(v')|+|X_v|+|Y_v|)+|Y_2'|+\sum_{v\in X_2'}(|X_v|+|N_Y(v)|+|Y_v|)\\
&=2|Y_1|+\sum_{vv'\in E(X_1)}(|X_v|+|Y_v|)+2|Y_2'|+\sum_{v\in X_2'}(|X_v|+|Y_v|),
\end{align*}
i.e.,
\begin{equation}\label{ex0y12}
e(X_0'\cup X_1\cup X_2',Y)\le  |X_0'|+|Y_0'|+2|Y_1|+2|Y_2'|.
\end{equation}

Let $X_0^*=X_0\setminus X_0'$ and $Y_0=Y\setminus (Y_1\cup Y_2'\cup Y_2\cup Y_0')$. Then $e(X_0^*,Y)=e(X_0^*,Y_0)$.
By  \eqref{ex2y} and \eqref{ex0y12},
\begin{align*}
e(X,Y)&=e(X_0^*,Y)+e(X_0'\cup X_1\cup X_2',Y)+e(X_2\setminus X_2',Y)\\
& \le e(X_0^*,Y_0)+|X_0'|+|Y_0'|+2|Y_1|+2|Y_2'|+2|Y_2|.
\end{align*}
Note that
\[
e(X)=\frac{1}{2}|X_1|+|X_2|-|X_2'|+|X_3|
\]
and
\[
n=1+|X_0^*|+|X_0'|+|X_1|+|X_2|+|X_3|+|Y_0|+|Y_0'|+|Y_1|+|Y_2'|+|Y_2|.
\]
From  \eqref{eqqq}, we have
\begin{equation}\label{eq1}
e(X_0^*,Y_0)\ge 2|X_0^*|+|X_0'|+|X_1|+2|X_2'|+3|Y_0|+2|Y_0'|+|Y_1|+|Y_2'|+|Y_2|-6,
\end{equation}
so
\begin{equation}\label{ex0y0}
e(X_0^*,Y_0)\ge 2|X_0^*|+3|Y_0|-6.
\end{equation}

\noindent
{\bf Case 1.} $|X_0^*|+|Y_0|\ge 5$.

Let  $G_1=G[\{u\}\cup X_0^*\cup Y_0]$. From  \eqref{ex0y0}, we have
\[
e(G_1)\ge |X_0^*|+e(X_0^*)+e(X_0^*,Y_0)+e(Y_0)\ge 3|X_0^*|+3|Y_0|-6+e(Y_0).
\]
As $G_1$ does not contain a DCC$_1$, we have $e(G_1)\le 3|X_0^*|+3|Y_0|-6$, so
\[
3|X_0^*|+3|Y_0|-6+e(Y_0)\le e(G_1)\le 3|X_0^*|+3|Y_0|-6,
\]
implying that $e(Y_0)=0$ and $e(G_1)=3|X_0^*|+3|Y_0|-6$.

As $e(Y_0)=0$, $G_1$ is a bipartite graph with bipartition $(X_0^*, \{u\}\cup Y_0)$.
As $e(G_1)=3|X_0^*|+3|Y_0|-6$, we have by Lemma \ref{cdouble2} that $G_1\cong K_{3, |X_0^*|+|Y_0|-2}$. So
$|X_0|=3$ or $|Y_0|=2$.

As $e(G_1)=3|X_0^*|+3|Y_0|-6$, we know from the above argument that
\eqref{ex0y0} is an equality. Then, from \eqref{eq1},  we have $X_0'=X_1=X_2'=Y_0'=Y_2'=Y_2=\emptyset$, so $X=X_0^*\cup X_3$ and $Y=Y_0$.

Suppose  $X_3\ne \emptyset$. Let $r$ be the number of triangles in $G[X]$. Then $|X_3|=3r$. If $|Y_0|=2$, then $G\cong H_{n,r}$, and  if $|X_0|=3$, then $G\cong H_{n,r}'$.  By Lemma \ref{comp3} and Corollary \ref{comp4},
we have $\rho<\rho(K_{3,n-3})$, a contradiction. It follows that $X_3=\emptyset$, so $n\ge 6$ and $G\cong K_{3,n-3}$.

\noindent
{\bf Case 2.} $|X_0^*|+|Y_0|\le 4$ and $X_0^*\ne \emptyset$.

As $e(X_0^*,Y_0)\le |X_0^*||Y_0|$, we have from \eqref{ex0y0} that
$(|X_0^*|-3)(|Y_0|-2)\ge 0$, so $|X_0^*|\le 3$ and $|Y_0|\le 2$.
If $|X_0^*|=1$ and $|Y_0|\ge 1$, then we have from \eqref{eq1} that  $|X_0'|+|X_1|+2|X_2|+2|Y_0'|+|Y_1|+|Y_2|+|Y_2'|\le 4-2|Y_0|\le 2$, so $Y_0'=Y_1=Y_2=Y_2'=\emptyset$,  implying  $u$ and any vertex in $Y_0$ belong to different components of $G-X_0^*$, that is,
the vertex of $X_0^*$ is a cut vertex, which contradicts Claim \ref{cut}. So if $|X_0^*|=1$, then $Y_0=\emptyset$ and there are three possibilities as below:
\begin{enumerate}
\item[(a)] $|X_0^*|=2$ and $|Y_0|=2$;

\item[(b)] $|X_0^*|=2,3$ and $|Y_0|=1$;
\item[(c)] $|X_0^*|=1,2,3$ and $Y_0=\emptyset$.
\end{enumerate}

Suppose first that (a) holds. From   \eqref{ex0y0}, we have  $e(X_0^*,Y_0)=4$.  Now, from  \eqref{eq1}, we have $X_0'=X_1=X_2'=Y_0'=Y_1=Y_2=\emptyset$, so $X=X_0^*$ and $Y=Y_0$.  If $X_3\ne \emptyset$, then $G\cong H_{n,\frac{|X_3|}{3}}$,  so we have by Lemma \ref{comp3} that $\rho<\rho(K_{3,n-3})$, a contradiction. It follows that $X_3=\emptyset$, so $n=5$.  Then  $G\cong K_{3,2}$, which is also a contradiction as $\rho<\rho(F_1)$.

Suppose next that (b) holds.  From  \eqref{eq1}, we have
$|X_0'|+|X_1|+2|X_2'|+|Y_1|+|Y_2'|+|Y_2|\le 3-|X_0^*| \le 1$, so $X_1=X_2'=\emptyset$. It follows that $Y=Y_0$, say $Y_0=\{w\}$, so $G':=G+uw$  does not contain a DCC$_1$.  By Lemma \ref{addedges}, $\rho(G')>\rho$,  a contradiction.

Suppose finally that (c) holds. If $|X_0^*|\ge 2$, then as $Y_0=\emptyset$, each vertex in $X_0^*$ is a pendant vertex of $G$, so  adding an edge between two vertices in $X_0^*$ leads to a graph that does not contain a DCC$_1$, which,  by Lemma \ref{addedges}, is a contradiction. So $|X_0^*|=1$.

\begin{claim}\label{xempty}
$X_1\cup X_2=\emptyset$.
\end{claim}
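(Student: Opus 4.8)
The plan is to argue by contradiction against the maximality of $\rho(G)$, assuming $X_1\cup X_2\neq\emptyset$. First I would record what case (c) gives us. Substituting $|X_0^*|=1$ together with $e(X_0^*,Y_0)=0$ and $|Y_0|=0$ (both because $Y_0=\emptyset$) into \eqref{eq1} yields the budget inequality
\[
|X_0'|+|X_1|+2|X_2'|+2|Y_0'|+|Y_1|+|Y_2'|+|Y_2|\le 4,
\]
so in particular $|Y|\le 4$, $|X_1|\le 4$ and $|X_2'|\le 2$. Moreover, since $e(X_0^*,Y)=e(X_0^*,Y_0)=0$ and the unique vertex $z$ of $X_0^*$ is isolated in $G[X]$, the vertex $z$ is a pendant whose only neighbour is $u$. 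The pendant $z$, the smallness of $Y$, and the edge/Perron moves of Lemmas \ref{addedges} and \ref{perron} are the engine of the whole argument.

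The first and main step is to show $Y=\emptyset$. The idea is that any $w\in Y$ can be \emph{absorbed} into $X$ by adding the edge $uw$; this keeps the graph connected, raises $\rho$ by Lemma \ref{addedges}, and the only thing to check is that $N(u)=X\cup\{w\}$ remains induced-$P_4$-free (equivalently, no $K_1\vee P_4$ appears with apex $u$ or $w$). Claims \ref{x1}, \ref{x2} and \ref{x20} pin $N_X(w)$ down tightly: for an isolated edge $vv'\in G[X_1]$ every $Y$-neighbour $w$ of $v$ has $N_X(w)\setminus\{v,v'\}\subseteq X_0$, and analogously for stars. Thus the sole obstruction is a vertex $w$ adjacent simultaneously to $v,v'$ (or to a star) and to some $z'\in X_0$, since then $z'uvv'$ (or $z'wvv'$) is a $P_4$. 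In that situation I would instead \emph{relocate}: since $x_u\ge x_{z'}$ and $uw\notin E(G)$, $z'w\in E(G)$, Lemma \ref{perron} gives $\rho(G-z'w+uw)>\rho(G)$, while the move turns $z'$ into a pendant and $\{v,v',w\}$ into a triangle (or enlarges the star) so that $N(u)$ stays $P_4$-free and no doubly chorded cycle with two chords at one vertex is created. Either way we contradict maximality, so $Y=\emptyset$; consequently $Y_1=Y_2'=\emptyset$ and $X_0'=\cup_{v\in X_1\cup X_2'}X_v=\emptyset$, whence $X_0=\{z\}$ and $G=K_1\vee G[X]$ with $G[X]$ a disjoint union of triangles, the isolated vertex $z$, and at least one $K_2$ or $K_{1,t}$ ($t\ge 2$).

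With $Y=\emptyset$ the contradiction is short: I would merge $z$ into the offending component. If $G[X]$ contains a star $K_{1,t}$ with centre $c$, add $zc$, making $z$ a leaf and the component $K_{1,t+1}$; if instead $G[X]$ contains an isolated edge $vv'$ (and no star), add $zv$ and $zv'$, making $\{v,v',z\}$ a triangle. In either move every component of the new $G[X]$ is again a triangle or a star, so $G[X]$ stays $P_4$-free, and one checks directly that $K_1\vee(\text{triangles}\cup\text{stars})$ has no $K_1\vee P_4$: every non-$u$ vertex either has at most three neighbours (triangle vertices and star leaves) or has a star neighbourhood (star centres). Hence the new graph $G'$ is DCC$_1$-free, while $\rho(G')>\rho(G)$ by Lemma \ref{addedges}, contradicting the maximality of $\rho(G)$. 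Therefore $X_1\cup X_2=\emptyset$.

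The hard part, absorbing essentially all the effort, is the $Y$-emptying step: one must verify, for every admissible attachment pattern of a $Y$-vertex allowed by Claims \ref{x1}, \ref{x2}, \ref{x20}, \ref{x12} under the budget $|Y|\le 4$, that adding $uw$ or relocating an edge via Lemma \ref{perron} keeps $N(u)$ free of an induced $P_4$ and produces no doubly chorded cycle with two chords at a common vertex elsewhere. Once $Y=\emptyset$ the remaining steps are routine, because $K_1\vee(\text{triangles}\cup\text{stars})$ is transparently DCC$_1$-free and any edge addition that preserves this structure immediately violates maximality.
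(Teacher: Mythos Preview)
Your plan has a genuine gap in the ``$Y$-emptying'' step. You assert that the \emph{sole} obstruction to adding $uw$ is a vertex $w$ adjacent both to an isolated edge (or to a star) \emph{and} to some $z'\in X_0$, and you propose to dissolve it by relocating the edge $z'w$ to $uw$ via Lemma~\ref{perron}. But this misses the case $w\in Y_2$. There $w$ is adjacent to a pendant $v$ of a $K_{1,2}$-component of $G[X]$ with centre $v_0$ and second pendant $v'$, and Claim~\ref{x2} forces $N_X(w)\subseteq\{v,v'\}$, so no $z'\in X_0$ is available to relocate. Adding $uw$ then puts the path $w\,v\,v_0\,v'$ inside $N_{G+uw}(u)$, so $G+uw$ contains $K_1\vee P_4$, a DCC$_1$. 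Thus both of your moves fail simultaneously here, and no simple Perron relocation among the vertices $\{u,v,v_0,v',w\}$ repairs this without either lowering $\rho$ or recreating a $P_4$ in $N(u)$.

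The paper handles $Y_2\ne\emptyset$ by an entirely different device. The budget \eqref{eq1} pins down $|X_2'|=1$, $|Y_2|\in\{1,2\}$ and $e(X_2,Y_2)=2|Y_2|$, so the block $F_0=G[\{u\}\cup X_0^*\cup X_2\cup Y_2]$ is determined explicitly; one then computes $\rho(F_0)$, $\rho(F_0-u)$ numerically, compares them with $\rho(K_{1,1,|Y_2|+3})$ and $\rho(K_{1,|Y_2|+3})$, and invokes the coalescence comparison Lemma~\ref{comm2} to contradict maximality. This numerical/coalescence step is what your edge-addition strategy cannot replace. (A smaller point: you repeatedly equate ``DCC$_1$-free'' with ``no $K_1\vee P_4$'', but a DCC$_1$ may be a long cycle with two chords at one vertex; even when your move keeps $N(u)$ free of a $P_4$, you still owe a check that no longer DCC$_1$ through the new edge is created. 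Your $Y=\emptyset$ endgame is essentially the paper's, and is fine once one notes that $K_1\vee(\text{triangles}\cup\text{stars}\cup\text{edges})$ is in fact DCC$_1$-free, not merely $K_1\vee P_4$-free.)
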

\begin{proof}
Suppose to the contrary that $X_1\cup X_2\ne \emptyset$.

If $Y=\emptyset$, then adding an edge between the vertex of $X_0^*$ and a vertex in $X_1\cup X_2'$ produces a graph that does not contain a DCC$_1$, which, by Lemma \ref{addedges}, is a contradiction. So $Y\ne \emptyset$. Moreover, we have  $Y_2'=\emptyset$; otherwise we have by Claim \ref{cut} and  \eqref{eq1} that $|X_0'|=|X_2'|=|Y_2'|=1$, $X=X_0^*\cup X_0'\cup X_2\cup X_3$ and $Y=Y_2'$, so $e(X,Y)=2$, which contradicts \eqref{eqqq}. This shows that $Y_1\cup Y_2\ne \emptyset$.

Suppose that $Y_1\ne\emptyset$. If $X_0'=\emptyset$, then  the graph obtained from $G$ by adding an edge between the vertex of $X_0^*$ and a vertex in $Y_1$ does not contain a DCC$_1$, which,  by Lemma \ref{addedges}, is a contradiction. So $X_0'\ne\emptyset$. From \eqref{eq1}, $|Y_1|=1$, so the graph obtained from $G$ adding an edge between the vertex of $X_0^*$ and the vertex of $Y_1$ does not contain a DCC$_1$, which,  by Lemma \ref{addedges}, is a contradiction.

Suppose that $Y_2\ne \emptyset$. Then $X_2'\ne \emptyset$. We have by Eq.~\eqref{eq1} that $|X_2'|=1$, $|Y_2|=1,2$ and $e(X_2,Y_2)=2|Y_2|$. Let $F_0=G[\{u\}\cup X_0^*\cup X_2\cup Y_2]$. Then $F_0$ is obtained from $K_{2,2+|Y_2|}$ with bipartition $(X_2^*,\{u\}\cup X_2'\cup Y_2)$ by adding an edge between $u$ and the vertex of $X_2'$ and a pendant vertex to $u$. If $n=5+|Y_2|$, then $G=F_0$ and if $n>5+|Y_2|$, then $G$ consists of $F_0$ and $G[V(G)\setminus V(F_0)\cup \{u\}]$ with a common vertex $u$. Let $F_0'\cong K_{1,1,|Y_2|+3}$. By a direct calculation, we have
\[
\rho(F_0)=\begin{cases}
2.9439&\mbox{ if }|Y_2|=1,\\
3.2054&\mbox{ if }|Y_2|=2,
\end{cases}
\mbox{ and }
\rho(F_0-u)=\begin{cases}
2&\mbox{ if }|Y_2|=1,\\
2.4495&\mbox{ if }|Y_2|=2,
\end{cases}
\]
From Lemma \ref{comm},
\[
\rho(F_0')=\begin{cases}
3.3723&\mbox{ if }|Y_2|=1,\\
3.7016&\mbox{ if }|Y_2|=2.
\end{cases}
\]
As $F_0'-u\cong K_{1,|Y_2|+3}$, \[
\rho(F_0'-u)=\begin{cases}
2&\mbox{ if }|Y_2|=1,\\
\sqrt{5}&\mbox{ if }|Y_2|=2.
\end{cases}
\]
Thus,
\[
\rho(F_0)<\rho(F_0')\mbox{ and }\rho(F_0-u)\ge \rho(F_0'-u).
\]
If $n=5+|Y_2|$, then $\rho(F_0)<\rho(F_0')$, a contradiction, and if $n>5+|Y_2|$, then we have by Lemma \ref{comm2} that $\rho(F_0'uG[V(G)\setminus V(F_0)\cup \{u\}])>\rho$, also a contradiction.

This shows that $X_1\cup X_2=\emptyset$, as desired.
\end{proof}

By Claim \ref{xempty}, we have $X=X_0^*\cup X_3$ and  $Y=\emptyset$, so $G\cong K_1\vee (K_1\cup \tfrac{n-2}{3}K_3)$, which, by Lemma \ref{cal}, is a contradiction if $n\ge 8$.  Thus $n=5$ and $G\cong F_1$.

\noindent
{\bf Case 3.} $X_0^*=\emptyset$.

\noindent
{\bf Case 3.1.} $X_0'=\emptyset$.

In this case, $X_0=\emptyset$.
If $X_1\cup X_2=\emptyset$, then $G\cong K_1\vee \tfrac{n-3}{3}K_3$, which,  by Lemma \ref{cal}, is a contradiction. So $X_1\cup X_2\ne \emptyset$.

Next, we show that $Y_1\cup Y_2\cup Y_2'=\emptyset$. Suppose to the contrary that $Y_1\cup Y_2\cup Y_2'\ne \emptyset$.

\begin{claim}\label{yempty}
For each $w\in Y_1\cup Y_2\cup Y_2'$ and one neighbor $v$ of $w$ in $X_1\cup X_2$, any path from $w$ to $u$ passes through some vertex of the component $H$ of $G[X]$ containing $v$.
\end{claim}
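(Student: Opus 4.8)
The plan is to argue by contradiction: assuming that some path from $w$ to $u$ avoids $V(H)$, I will build a DCC$_1$, which $G$ does not contain. First I record the local picture around $v$. Let $v'$ be a neighbour of $v$ inside its component $H$ of $G[X]$ — the partner of $v$ if $v\in X_1$, the centre of the star if $v\in X_2^*$, and a leaf if $v\in X_2'$; such a $v'$ exists since each component of $G[X]$ meeting $X_1\cup X_2$ is a $K_2$ or a star. Because $V(H)\subseteq X=N_G(u)$, the three edges $uv$, $uv'$, $vv'$ are present, so $u,v,v'$ form a triangle hanging on $H$. I also note that, depending on whether $w\in Y_1$, $w\in Y_2'$ or $w\in Y_2$, Claim \ref{x1}, Claim \ref{x20} or Claim \ref{x2} respectively, together with $X_0=\emptyset$, forces $N_X(w)\subseteq V(H)$; this orients the argument (any route from $w$ must first step into $Y$), though the contradiction below only needs that $a$ defined next is non-isolated in $G[X]$.

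Suppose then that $w$ and $u$ are joined by a path avoiding $V(H)$, and take $P\colon w=p_0,p_1,\dots,p_k=u$ to be a \emph{shortest} such path, so that $P$ is induced. Let $a:=p_{k-1}$ be the neighbour of $u$ on $P$. Since $N_G(u)=X$ we have $a\in X$, and $a\notin V(H)$ because $P$ avoids $V(H)$; hence $a$ lies in a component $H'\neq H$ of $G[X]$. As $X_0=\emptyset$, the vertex $a$ is not isolated in $G[X]$, so it has a neighbour $a'\in X$ inside $H'$, with $a'\notin V(H)$ and $a'\neq u,w$.

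I now split according to whether $a$ has an $X$-neighbour off the path. If $a$ has such a neighbour $a'\in X$ with $a'\notin\{p_1,\dots,p_{k-2}\}$, consider the cycle $C\colon u,v',v,w,p_1,\dots,p_{k-1}=a,a',u$; it is simple because $v,v'$ lie outside $P$ (as $P$ avoids $V(H)$) and $a'\neq a$ is distinct from all $p_i$. The cycle-edges at $u$ are $uv'$ and $ua'$, so both $uv$ and $ua$ are chords of $C$ incident to $u$, and $C$ is a DCC$_1$. If instead every $X$-neighbour of $a$ lies on $P$, then since $P$ is induced the only $X$-neighbour of $a$ among the $p_i$ is $p_{k-2}$, whence $a':=p_{k-2}\in X$; now the cycle $u,v',v,w,p_1,\dots,p_{k-1}=a,u$ has cycle-edges $uv'$ and $ua$ at $u$, and $uv$ together with $up_{k-2}$ are two chords at $u$, again a DCC$_1$. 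Either way this contradicts the hypothesis on $G$, so every path from $w$ to $u$ meets $V(H)$.

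The only genuine difficulty is producing the \emph{second} chord at $u$: the chord $uv$ is free from the triangle $uvv'$ carried by $H$, but a second chord at $u$ must be manufactured from the component $H'$ through which $P$ re-enters $u$. The device is to route the cycle so that $a$ (or its partner $a'$) becomes an interior, non-adjacent vertex of $u$ on the cycle; choosing $P$ to be shortest is exactly what confines the on-path $X$-neighbours of $a$ to $p_{k-2}$, which keeps the (possibly rerouted) cycle simple in both cases and ensures both chords are honest non-edges of the cycle. The hypothesis $X_0=\emptyset$ of this case is essential precisely here, since it is what guarantees that the endpoint $a$ of $P$ has a neighbour $a'$ in $G[X]$ to play this role.
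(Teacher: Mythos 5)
Your proof is correct and takes essentially the same route as the paper: from a path $P$ from $w$ to $u$ avoiding $V(H)$, both arguments form the cycle $u\,v'\,v\,w\,P[w,a]\,a\,a'\,u$ and exhibit the two chords $uv$ and $ua$ at $u$, with $X_0=\emptyset$ supplying the $X$-neighbour $a'$ of the vertex $a$ preceding $u$ on $P$. If anything, you are more careful than the paper, whose proof simply says ``Assume that $v_1'\notin V(P)$'' for that neighbour; your shortest-path (hence induced) choice of $P$ legitimately disposes of the complementary case, where the on-path neighbour $p_{k-2}\in X$ itself furnishes the second chord $up_{k-2}$ of the cycle $u\,v'\,v\,w\,P[w,a]\,a\,u$.
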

\begin{proof}
Suppose that this is not true. let $P$ be a path
from $w$ to $u$ that does not pass through any vertex of $H$. Let $v_1$ be a neighbor of $u$ on $P$. As $X_0=\emptyset$, $N_X(v_1)\ne \emptyset$. Let $v_1'\in N_X(v_1)$. Assume that $v_1'\notin V(P)$. Then with $v'\in N_X(v)$, $uv'vwP[w,v_1]v_1v_1'u$ is a cycle with chords $uv$ and $uv_1$, a contradiction.
\end{proof}

\begin{claim}\label{b1}
$Y_2'=\emptyset$.
\end{claim}

\begin{proof}
Otherwise, assume that $w\in Y_2'$. Let $v$ be the neighbor of $w$ in $X_2'$. By Claims \ref{cut} and \ref{yempty}, there is a path from $w$ to $u$ containing some vertex $v^*\in N_X(v)$. Then,   with $v'\in N_X(v)\setminus\{v^*\}$, $uv'vwP[w,v^*]v^*u$ is a cycle with chords $uv$ and $vv^*$, a contradiction.
\end{proof}

By Claim \ref{b1}, we assume that $w\in Y_1\cup Y_2$. Let $v_1$ be one neighbor of $w$ in $X_1\cup X_2$ and $H$ be the component of $G[X]$ containing $v_1$, which is a copy of $K_2$ or $K_{1,2}$.
Denote by $v_2$ the pendant vertex in $H$ different from $v_1$.

Let $Y_H=N_Y(v_1)\cup N_Y(v_2)$.

\begin{claim}\label{OO}
For any $z\in Y_H$, $d_G(z)=2$.
\end{claim}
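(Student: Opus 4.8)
The plan is to bound the neighbourhood of $z$ from both sides and show it has size exactly two. First I would pin down the neighbours of $z$ inside $X$. Since we are in Case 3.1 we have $X_0=\emptyset$, so if $v_1\in X_1$ then Claim \ref{x1} forces $N_X(z)\subseteq\{v_1,v_2\}$, while if $v_1\in X_2^*$ then Claim \ref{x2} forces $N_X(z)\subseteq\{v_1,v_0\}$, where $v_0$ is the centre of the star component $H$. In either case $d_X(z)\le 2$, and every neighbour of $z$ in $X$ lies in $V(H)$; the case $z\in N_Y(v_2)$ is symmetric. Next I would establish the lower bound $d_G(z)\ge 2$: if $z$ had a single neighbour, namely the vertex of $H$ it hangs from, that vertex would separate $z$ from $u$ and hence be a cut vertex distinct from $u$, contradicting Claim \ref{cut}.

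The heart of the matter is the upper bound $d_G(z)\le 2$, which I would prove by contradiction, assuming $d_G(z)\ge 3$ and exhibiting a DCC$_1$. The structural input is that $u$ is adjacent to every vertex of $X$, so $\{u\}\cup V(H)$ spans a triangle $uv_1v_2$ when $H\cong K_2$, and two triangles sharing the edge $uv_0$ when $H\cong K_{1,2}$. Since the $X$-neighbours of $z$ all lie in $V(H)$ and $d_X(z)\le 2$, a third neighbour $y$ of $z$ must lie in $Y$; by Claims \ref{cut} and \ref{yempty} there is a path from $y$ to $u$ meeting $V(H)$, so $y$ reconnects to the triangle structure. I would then locate a vertex $p\in V(H)$ whose neighbourhood already contains $u$ together with a path of length three assembled from the remaining vertex of $H$, from $z$, and from $y$ (or from its reconnecting path); then $p\vee P_4\cong K_1\vee P_4$ is the required DCC$_1$. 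As a sample case, when $z\sim v_1,v_2$ with $z\sim y$ and $y\sim v_1$, the vertices $u,v_2,z,y$ induce the path $u-v_2-z-y$ inside $N(v_1)$, yielding $K_1\vee P_4$ with apex $v_1$; equivalently, the cycle $v_1 u v_2 z y v_1$ carries the two chords $v_1 v_2$ and $v_1 z$ at the common vertex $v_1$.

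The main obstacle will be the bookkeeping of the reconnection. The extra neighbour $y$ need not be adjacent to $H$ directly, so one must use Claim \ref{yempty} to route a path from $y$ back into $V(H)$ and then verify, across the cases $H\cong K_2$ versus $H\cong K_{1,2}$ and according to whether the third neighbour of $z$ lies in $X$ or in $Y$, that some apex $p\in V(H)$ always sees a $P_4$ in its neighbourhood, equivalently that the resulting cycle carries two chords incident to one vertex. Using that $G[X]$ is $P_4$-free rules out the apex $u$ and thereby forces the apex into $V(H)\cup\{z\}$, which keeps the analysis finite. Combining the three steps yields $d_G(z)=2$, as claimed.
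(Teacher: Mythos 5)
Your outline reproduces the paper's constructions in the easy subcases, but it has a genuine gap at the crucial point: the assumption that every extra neighbour $y\in N_Y(z)$ can be ``routed back into $V(H)$'' via Claim \ref{yempty}. Claim \ref{yempty} applies only to vertices of $Y_1\cup Y_2\cup Y_2'$, i.e.\ vertices having a neighbour in $X_1\cup X_2$. The third neighbour of $z$ may instead lie in $Y_0$, having no neighbour in $X$ at all (the alternative, $y\in Y_1\cup Y_2$ attached to a component $H'\ne H$, is ruled out because the path $yzv_1u$ would avoid $V(H')$, contradicting Claim \ref{yempty} applied to $y$; so $y\in Y_0$ is exactly the surviving case). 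For $y\in Y_0$ a path from $y$ to $u$ is only guaranteed to enter $X$ somewhere, possibly through a triangle or another $K_2$, and the resulting cycle through $u$, $v_1$, $z$, $y$ carries no second chord: no apex in $V(H)\cup\{z\}$ sees a $P_4$, so no DCC$_1$ can be exhibited. This is precisely where the paper abandons the constructive approach and invokes the global spectral count: feeding $|Y_0|\ge 1$ into \eqref{eqqq} yields $|X_1|+2|X_2'|+3|Y_0|+|Y_1|+|Y_2|\le 6$, forcing $|Y_0|=1$ and $Y_H=\{z\}$, so that $z$ becomes a cut vertex separating $Y_0$ from $u$, contradicting Claim \ref{cut}. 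Without some such counting step your case analysis cannot close, and no purely local DCC$_1$ construction will do it.

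A secondary error: when $H\cong K_{1,2}$ you assert $N_X(z)\subseteq\{v_1,v_0\}$ with $v_0$ the centre, but Claim \ref{x2} gives $N_X(z)\subseteq\{v_1,v_2\}$ with $v_2$ the \emph{other pendant} vertex; adjacency of $z$ to the centre would place $z\in Y_2'$, which is empty by Claim \ref{b1}. This matters for your chord bookkeeping: in the $K_{1,2}$ case the edge $v_1v_2$ does not exist, so the cycle in your sample case has only one chord there, and one must route through the centre as the paper does, e.g.\ $uv_0v_2zz'P[z',v_1]v_1u$ with the two chords $v_0v_1$ and $v_1z$ at $v_1$. Your lower bound $d_G(z)\ge 2$ via Claim \ref{cut} and your two-neighbours-in-$H$ subcase do match the paper, but as written the proposal does not prove the claim.
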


\begin{proof}
By Claims \ref{x20} and \ref{x1}, $z$ has one or two neighbors in $H$.

Suppose first that $z$ has two neighbors in $H$, that is, the two neighbors are $v_1$ and $v_2$. If $N_Y(z)\ne \emptyset$, say $z'\in N_Y(z)$, then by Claim \ref{cut}, there is a path from $z'$ to $u$ which does not pass through $z$. By Claims \ref{yempty} and \ref{b1}, a shortest such path $P$ passes through one of $v_1$ and $v_2$, say $v_1$. Then $v_2$ does not lie on $P$. So $uv_2zz'P[z',v_1]v_1u$ is a cycle with chords $v_1v_2$ and $v_1z$ if $H\cong K_2$ and $uv_0v_2zz'P[z,v_1]v_1u$ is a cycle with chords $v_0v_1$ and $v_1z$ if $H\cong K_{1,2}$ with center $v_0$, a contradiction.  It follows that $N_Y(z)=\emptyset$, so $d_G(z)=2$.

Suppose next that $z$ has exactly one neighbor, say $v_1$, in $H$.
Suppose that $d_Y(z)\ge 2$. Next, we show that $N_Y(z)\subseteq Y_H$. Suppose that this is not true. Then
$z$ has a neighbor  in $Y_0$, so $|Y_0|\ge 1$. Note that $X_0=\emptyset$. By Claim  \ref{x2}, $e(X^*_2,Y_2)\le 2|Y_2|$. By
Claim \ref{x1}, $e(X_1,Y_1)\le 2|Y_1|$. Now \eqref{eqqq} becomes
\[
3(|X_1|+|X_2|+|Y_1|+|Y_2|+|Y_0|)-6\le |X_1|+|X_2|+2|X_2|-2|X_2'|+2|Y_1|+2|Y_2|
\]
i.e., $|X_1|+2|X_2'|+3|Y_0|+|Y_1|+|Y_2|\le 6$, so $|Y_0|=1$. If $X_1\ne\emptyset$, then $z\in Y_H\subseteq Y_1$, so $Y_H=\{z\}$. If $X_2'\ne\emptyset$, then $z\in Y_H\subset Y_2$, so $Y_H=\{z\}$. In either case,  $z$ is a cut vertex of $G$, a contradicting Claim \ref{cut}. So $N_Y(z)\subseteq Y_H$. By Claims \ref{cut} and \ref{yempty}, there is a path $P$ from $z$ to $u$ containing $v_2$ but not $v_1$. Let $z'\in N_Y(z)$ outside $P$.  Assume that $z'v_1\in E(G)$. Then $uv_1z'zP[z,v_2]v_2u$ is a cycle with chords $v_1v_2$ and $v_1z$ if $H\cong K_2$ and $uv_1z'zP[z,v_2]v_2v_0u$ is a cycle with chords $v_0v_1$ and $v_1z$ if $H\cong K_{1,2}$  with center $v_0$, a contradiction. So $d_Y(z)\le 1$. By Claim \ref{cut}, $d_Y(z)=1$,  so $d_G(z)=2$.
\end{proof}

By Claims \ref{x2}, \ref{x1} and \ref{OO}, $e(Y_H)\le \tfrac{|Y_H|}{2}$, so
$\sum_{z\in Y_H}x_ux_z>\sum_{zz'\in E(G[Y_H])}x_zx_{z'}$.
Assume that $x_{v_1}\ge x_{v_2}$.

Suppose that $H\cong K_2$. Let
\begin{align*}
G'&=G-\{v_2z:z\in N_Y(v_2)\setminus N_Y(v_1)\}-\{zz'\in E(G):z,z'\in Y_H\}\\
&\quad +\{v_1z:z\in N_Y(v_2)\setminus N_Y(v_1)\}+\{uz:z\in Y_H \}.
\end{align*}
It is easy to see that  $G'$ does not contain a DCC$_1$. But we have by Rayleigh's principle that
\[
\rho(G')\ge \rho+
2\sum_{z\in N_Y(v_2)\setminus N_Y(v_1)}(x_{v_1}-x_{v_2})x_z+2\left(\sum_{z\in Y_H}x_ux_z-\sum_{zz'\in E(G[Y_H])}x_zx_{z'}\right)>\rho,
\]
a contradiction.

Suppose that $H\cong K_{1,2}$. Let $v_0$ be the center of $H$.
Let
\[
G'=
\begin{cases}
\begin{aligned}
&G-\{v_iz_i:z_i\in N_Y(v_i),i=1,2\}-\{zz'\in E(G):z,z'\in Y_H\}\\
&\quad +\{uz,v_0z:z\in Y_H\}
\end{aligned}
&
 \mbox{if $x_{v_0}\ge x_{v_1}$}, \\
 \begin{aligned}
&G-v_0v_2-\{v_2z:z\in N_Y(v_2)\}-\{zz'\in E(G):z,z'\in Y_H\}\\
 &\quad +v_1v_2+\{uz:z\in Y_H \}+\{v_1z:z\in N_Y(v_2)\setminus N_Y(v_1)\}
 \end{aligned}
& \mbox{otherwise}.
 \end{cases}
 \]
Note that $G'$ does not contain a DCC$_1$. Let $U_0=N_Y(v_1)\cap N_Y(v_2)$, $U_1=N_Y(v_1)\setminus U_0$ and $U_2=N_Y(v_2)\setminus U_0$. If   $x_{v_0}\ge x_{v_1}$,  as one of $U_0, U_1, U_2$ is not empty, so
\begin{align*}
\rho(G')& \ge \rho(G) +2\sum_{z\in U_0}(x_u+x_{v_0}-x_{v_1}-x_{v_2})x_z\\
&\quad +2\sum_{z\in U_1}(x_{v_0}-x_{v_1})x_z+2\sum_{z\in U_2}(x_{v_0}-x_{v_2})x_z\\
&\quad +2\left(\sum_{z\in U_1\cup U_2}x_ux_z-
\sum_{zz'\in E(G[U_1\cup U_2])}x_zx_{z'}\right)\\
&>\rho,
\end{align*}
and otherwise,   we have $x_{v_1}>x_{v_0}$, so
\begin{align*}
\rho(G') & \ge \rho+2(x_{v_1}-x_{v_0})x_{v_2}+2\sum_{z\in U_2}(x_{v_1}-x_{v_2})x_z\\
&\quad +2\sum_{z\in U_0}(x_u-x_{v_2})x_z+2\left(\sum_{z\in U_1\cup U_2}x_ux_z-\sum_{zz'\in E(G[U_1\cup U_2])}x_zx_{z'}\right)\\
&>\rho.
\end{align*}
Thus we reach a contradiction in either case.
This shows that $Y_1\cup Y_2\cup Y_2'=\emptyset$.
Thus, $Y=\emptyset$.

Suppose that there are two components in $G[X_1\cup X_2]$, say $H_1$ and $H_2$. Let $v_i$ be the center of $H_i$ for $i=1,2$ (if $H_i\cong K_2$, then $v_i$ is arbitrary). Assume that $x_{v_1}\ge x_{v_2}$. Let
\[
G'=G-\{v_2z: z\in N_X(v_2)\}\cup \{v_1z: z\in N_X(v_1)\}.
\]
Obviously, $G'$ does not contain a DCC$_1$. By Lemma \ref{perron}, $\rho(G')>\rho$, a contradiction. So $G[X_1\cup X_2]$ is connected, i.e., either $X_1\ne \emptyset$ or $X_2\ne \emptyset$.

If $X_1\ne \emptyset$, then $|X_1|=2$, so $G\cong K_1\vee (K_2\cup \tfrac{n-3}{3}K_3)$, which, by Lemma \ref{cal}, is a contradiction.

If $X_2\ne\emptyset$, then $|X_2'|=1$, $G\cong K_1\vee(K_{1,t}\cup \tfrac{n-t-2}{3}K_3)$ for some $2\le t\le n-2$,  so we have by Lemma \ref{cal2} that $t=n-2$, i.e., $G\cong K_{1,1,n-2}$.

\noindent
{\bf Case 3.2.} $X_0'\ne \emptyset$.

In this case, $X_1\cup X_2'\ne \emptyset$.
As $X_0'\ne \emptyset$,  we have   $v_0w\in E(G)$ for some $v_0\in X_0'$ and some $w\in Y_1\cup Y_2'$.

\noindent
{\bf Case 3.2.1.} $w\in Y_1$.

From \eqref{eq1},  $Y_2=Y_2'=\emptyset$.

Let $v_1$ be a neighbor of $w$ in $X_1$ and $v_2$ the neighbor of $v_1$ in $X$. Let $X_1'=\{v_1,v_2\}$, $X_0''=\cup_{v\in X_1'}X_v$, $Y^*=N_Y(v_1)\cup N_Y(v_2)$ and $Y'=N_Y(v_1)\cap N_Y(v_2)$.

By similar argument as in the proof of Claim \ref{eqx1}, $Y'=\{w\}$ or $Y'=\emptyset$.

Suppose that $Y'=\{w\}$. Then $e(X_0'',Y_1)=|X_0''|$.
As $e(X_1',Y_1)=|Y^*|+1$, we have by  \eqref{eqqq} that $|Y^*|=1$, $e(X_1',Y^*)=2$ and $1\le |X_0''|\le 3$. So $N_G(w)=X_1'\cup X_0''$.
Let $F_2=G[\{u\}\cup N_G[w]]$. Then $F_2$ is the graph obtained from $K_{2,|X_0''|+2}$ with bipartition $(\{u,w\}, N_G(w))$ by adding an edge $v_1v_2$.
Thus, if $n=4+|X_0''|$, then $G=F_2$, and if $n>4+|X_0''|$,  then $G$ consists of $F_2$ and $G[V(G)\setminus N_G[w]]$ with a common vertex $u$. Let $F_2'$ be a copy of $K_{1,1,|X_0''|+2}$ obtained from
$K_{2,|X_0''|+2}$ as above by adding an edge $uw$.
By a direct calculation, we have
\[
\rho(F_2)=\begin{cases}
2.8858&\mbox{ if }|X_0''|=1,\\
3.1413&\mbox{ if }|X_0''|=2,\\
3.4142&\mbox{ if }|X_0''|=3,\end{cases} \mbox { and }
\rho(F_2-u)=\begin{cases}
2.1701&\mbox{ if }|X_0''|=1,\\
2.3429&\mbox{ if }|X_0''|=2,\\
2.5141&\mbox{ if }|X_0''|=3.
\end{cases}
\]
From Lemma \ref{comp},
\[
\rho(F_2')=\begin{cases}
3&\mbox{ if }|X_0''|=1,\\
3.3723&\mbox{ if }|X_0''|=2,\\
3.7016&\mbox{ if }|X_0''|=3.
\end{cases}
\]
Evidently, $F_2'-u\cong K_{1, |X_0'|+2}$, so
\[
\rho(F_2'-u)=\begin{cases}
\sqrt{3}&\mbox{ if }|X_0''|=1,\\
2&\mbox{ if }|X_0''|=2,\\
\sqrt{5}&\mbox{ if }|X_0''|=3.
\end{cases}
\]
Thus
\[
\rho(F_2)<\rho(F_2') \mbox{ and } \rho(F_2-u)>\rho(F_2'-u).
\]
If $n=4+|X_0''|$, then $\rho(F_2)<\rho(F_2')$, a contradiction, and if $n>4+|X_0''|$, then
we have by Lemma \ref{comm2} that $\rho(F_2'uG[V(G)\setminus N_G[w]])>\rho$, also a contradiction.

Suppose that $Y'=\emptyset$. Then $e(X_1,Y_1)=|Y^*|$ and $e(X_0'',Y_1)\le |X_0''|+|Y^*|-1$, so we have by \eqref{eqqq} that $|Y^*|=1$ and $|X_0''|\le 2$. As $G+v_2w$ also does not contain a DCC$_1$, we have by Lemma \ref{addedges} that $\rho(G+v_2w)>\rho$, a contradiction.

\noindent
{\bf Case 3.2.2.} $w\in Y_2'$.

Let $v\in N_X(w)$. As $G$ does not contain a DCC$_1$, $N_Y(w)=\emptyset$.
So $G'=G-\{vz:z\in N_Y(v)\}+\{uz:z\in N_Y(v)\}$ also does not contain a DDC$_1$. By Lemma \ref{perron}, $\rho(G')>\rho$, a contradiction.

Combining the above cases, we have $G\cong F_1, K_{1,1, n-2}, K_{3,n-3}$.
Note that $\rho(F_1)>\rho(K_{1,1,3})>\rho(K_{3,2})$. By Lemma \ref{comp}, we have \eqref{FFF}.
\end{proof}

\section{Concluding remarks}

By Theorem \ref{double2},
if $G$ is a $5$-vertex graph containing no copies of $K_1\vee P_4$, then
$\rho(G)\le \rho(F_1)$ with equality if and only if $G\cong F_1$. Questions \ref{LY} and \ref{LY2} are different from the question to find spectral conditions that imply a graph on $n$ vertices contains a copy of $K_1\vee P_4$.

\begin{theorem}\label{double1} \cite{ZP}
Suppose that $G$ is an $n$-vertex graph containing no copies of $K_1\vee P_4$ with $n\ge 6$. Then
\[
\rho(G)\le \begin{cases}
\frac{n+1}{2}&\mbox{ if }n\equiv 1\pmod 2\\
\frac{1+\sqrt{n^2+1}}{2}&\mbox{ if }n\equiv 0\pmod 4\\
\eta(n)&\mbox{ if }n\equiv 2\pmod 4
\end{cases}
\]
with equality if and only if $G\cong H_n$, where
\[
H_n=\begin{cases}
\frac{n+1}{2}K_1\vee \frac{n-1}{4}K_2&\mbox{ if }n\equiv 1\pmod 4,\\
\frac{n-1}{2}K_1\vee \frac{n+1}{4}K_2&\mbox{ if }n\equiv 3\pmod 4,\\
\frac{n}{2}K_1\vee \frac{n}{4}K_2&\mbox{ if }n\equiv 0\pmod 4,\\
\frac{n}{2}K_1\vee (\frac{n-2}{4}K_2\cup K_1)&\mbox{ if }n\equiv 2\pmod 4,
\end{cases}
\]
and  $\eta(n)$ is the largest root of $x^3-x^2-\tfrac{n^2}{4}x+\tfrac{n}{2}=0$.
\end{theorem}

\bigskip

\noindent
{\bf Declaration of competing interest}

There is no competing interest.
%The authors have no relevant financial or non-financial interests to disclose.

\bigskip

\noindent
{\bf Data availability}

No data was used for the research described in the article.

\bigskip
\noindent {\bf Acknowledgements}

%The authors  would like to thank the reviewers for constructive comments and suggestions.
This work was supported by National Natural Science Foundation of China (No.~12071158).

\end{document}